\numberwithin{equation}{section}\newtheorem{theorem}{Theorem}[section]
\newtheorem{corollary}[theorem]{Corollary}\newtheorem{lemma}[theorem]{Lemma}
\newtheorem{proposition}[theorem]{Proposition}\theoremstyle{remark}
\newtheorem{remark}{Remark}[section]
\theoremstyle{definition}
\newcommand{\bra}[1]{\langle #1 \rangle}
\newcommand{\one}[1]{\mathbf{1}_{#1}}
\newcommand{\sgn}{\mathop{\textrm{sgn}}}
\title[Helmholtz equation]
{Helmholtz and dispersive equations with variable coefficients
on exterior domains}
\date{\today}    
\author{Federico Cacciafesta}
\address{Federico Cacciafesta: 
SAPIENZA --- Universit\`a di Roma,
Dipartimento di Matematica, 
Piazzale A.~Moro 2, I-00185 Roma, Italy}
\email{cacciafe@mat.uniroma1.it}
\author{Piero D'Ancona}
\address{Piero D'Ancona: 
SAPIENZA --- Universit\`a di Roma,
Dipartimento di Matematica, 
Piazzale A.~Moro 2, I-00185 Roma, Italy}
\email{dancona@mat.uniroma1.it}
\author{Renato Luc\`a}
\address{Renato Luc\`a: 
Instituto de Ciencias Matematicas, 
Consejo de Investigaciones Cientificas, 
C. Nicolas Cabrera 13-15, 28049 Madrid, Spain.
Supported by the ERC grant 277778 and MINECO grant SEV-2011-0087 (Spain)}
\email{renato.luca@icmat.es}
\thanks{The authors were partially supported by the
Italian Project FIRB 2012 ``Dispersive
dynamics: Fourier Analysis and Variational Methods''}
\subjclass[2010]{%
35J25,
35J10,
35L20
35Q41}
\keywords{}
\begin{document}
\begin{abstract}
  We prove smoothing estimates in Morrey-Campanato spaces
  for a Helmholtz equation
  \begin{equation*}
    -Lu+zu=f,
    \qquad
    -Lu:=\nabla^{b}(a(x)\nabla^{b}u)-c(x)u,
    \qquad
    \nabla^{b}:=\nabla+ib(x)
  \end{equation*}
  with fully variable coefficients, of limited regularity, 
  defined on the exterior of a starshaped compact obstacle
  in $\mathbb{R}^{n}$, $n\ge3$, with Dirichlet boundary conditions.
  The principal part of the operator is a long range perturbation
  of a constant coefficient operator, while the lower order terms 
  have an almost critical decay. We give explicit conditions
  on the size of the perturbation which prevent trapping.

  As an application, we prove smoothing
  estimates for the Schr\"{o}dinger flow $e^{itL}$
  and the wave flow $e^{it \sqrt{L}}$
  with variable coefficients on exterior domains
  and Dirichlet boundary conditions.
\end{abstract}\maketitle


\section{Introduction}\label{sec:introduction}

The Helmholtz equation
\begin{equation}\label{eq:baseh}
  \Delta u+z u=f(x)
\end{equation}
where $z\in \mathbb{C}$ and $x$ varies on $\mathbb{R}^{n}$
or on the exterior $\Omega=\mathbb{R}^{n}\setminus K$ 
of an obstacle $K$, is used to model standing waves 
in many different applications in physics and engineering. 
When $z\not\in \mathbb{R}$, \eqref{eq:baseh} can be written as
the resolvent equation $u=R_{0}(z)f$ for
$R_{0}(z)=(-\Delta-z)^{-1}$, 
and the interesting problem is to prove
uniform estimates for $z$ approaching the spectrum
of the operator.
In addition,
smoothing estimates for \eqref{eq:baseh} have become an
important tool to prove smoothing and Strichartz estimates
for many dispersive equations including
wave, Schr\"{o}dinger, Klein-Gordon and Dirac equations.

The theory of \eqref{eq:baseh} is classical, while in recent
years generalizations of \eqref{eq:baseh} with 
lower order terms have attracted some attention. In
\cite{PerthameVega99-a},
\cite{PerthameVega08-a}
the Helmholtz equation on $\mathbb{R}^{n}$
with a variable refraction index
\begin{equation*}
  \Delta u-n(x)u+z u=f(x)
\end{equation*}
is studied; first order perturbations
were examined in \cite{Fanelli09-b} on the whole space,
and on exterior domains in \cite{BarceloFanelliRuiz13-a}.

Resolvent estimates for elliptic operators of various type
have been studied intensely, initially in the framework
of spectral theory and scattering, more
recently for use in proving decay estimates for
evolution equations. The two strains of research
have independent origin in the seminal papers
\cite{Kato65-a} (see also \cite{KatoYajima89-a}),
\cite{Agmon75-a} and
in Morawetz' work on wave equations
\cite{Morawetz68-a}, \cite{Morawetz75-a};
Fourier transform with respect to the time variable
acts as a bridge between the two points of view.
Resolvent estimates for $-\Delta+V(x)$
were proved in 
\cite{Ben-Artzi92-a},
\cite{Ben-ArtziKlainerman92-a},
and more recently for singular potentials in
\cite{BarceloRuizVega09-a}.
Magnetic potentials $(i \nabla-A(x))^{2}+V(x)$ 
were studied in
\cite{DAnconaFanelli08-a} (small $A$)
and
\cite{ErdoganGoldbergSchlag09-a},
\cite{CardosoCuevasVodev13-a}
(large $A$).

The case of
Laplace-Beltrami operators on some classes of manifolds
has also attracted intense attention. 
Among the many contributions, we mention
\cite{Burq98-b},
\cite{VasyZworski00-a},
\cite{Burq02-a},
\cite{CardosoVodev02-a},
\cite{Vodev04-a},
\cite{BoucletTzvetkov08-a}
for high frequency resolvent estimates on 
asymptotically flat or conical manifolds, possibly
with boundary. Global smoothing estimates for all
frequencies in the case of manifolds which are
flat outside a compact region
were obtained in 
\cite{RodnianskiTao07-a}, and
quantitative bounds for more general manifolds
were proved in
\cite{RodnianskiTao11-a}.

A key point in the previous results is that the principal
part of the operator, and the obstacle, must be \emph{nontrapping}, meaning that the Hamiltonian flow
of the operator
leaves compact sets in a finite time. Actually, weaker
resolvent estimates are still true in the trapped case,
but the bounds may grow exponentially in the frequency,
making them unsuitable for applications to dispersive
equations.
The importance of this condition in the context of
smoothing estimates was noted at an early stage, see
\cite{CraigKappelerStrauss95-a},
\cite{Doi00-a},
\cite{Burq02-a}.

In this paper we prove smoothing estimates on 
a starshaped exterior domain
for a Helmholtz equation  with fully variable coefficients,
and for the corresponding Schr\"{o}dinger and wave equations.
In summary,
under our assumptions, the principal part is a (small)
long range perturbation of a constant coefficient operator,
while the lower order terms have an almost critical decay
at infinity. Our method of proof, based
on an adaptation of the multiplier method, permits to give
explicit quantitative conditions on the coefficients,
which may be regarded as
a form of \emph{quantitative nontrapping conditions}.
By this we mean that the assumptions can be explicitly checked
in concrete examples, in contrast with the usual formulation
of nontrapping in terms of the bicharacteristic flow.

We consider a Helmholtz equation 
\begin{equation}\label{eq:helm}
  \nabla^b\cdot(a(x)\nabla^bv)-c(x)v
    +(\lambda+i\varepsilon) v=f,\qquad
  \lambda\in \mathbb{R},\quad \epsilon>0
\end{equation}
on an exterior domain $\Omega=\mathbb{R}^{n}\setminus \omega$,
with $\omega$ bounded and possibly empty;
coefficients are real valued, $a(x)=[a_{jk}(x)]_{j,k=1}^{n}$
and $\nabla^{b}$ denotes the covariant derivatives
\begin{equation*}
  \nabla^{b}=(\partial_{1}^{b},\dots,\partial_{n}^{b}),\qquad
  \partial^{b}_{j}=\partial_{j}+ib_{j}(x).
\end{equation*}
The conditions on the coefficients are the following:

\textsc{Assumptions on $a(x)$}.
The symmetric, real valued matrix
$a(x)=[a_{jk}(x)]_{j,k=1}^{n}$ satisfies
\begin{equation}\label{eq:assaabove}
  NI\ge a(x)\ge\nu I,
  \qquad
  N\ge\nu>0,
\end{equation}
and denoting with $|a(x)|$ the operator norm of the
matrix $a(x)$ and writing
$|a'|=\sum_{|\alpha|=1}|\partial^{\alpha}a(x)|$,
$|a''|=\sum_{|\alpha|=2}|\partial^{\alpha}a(x)|$ and
$|a'''|=\sum_{|\alpha|=3}|\partial^{\alpha}a(x)|$,
we have
\begin{equation}\label{eq:assader}
  |a'(x)|+|x||a''(x)|+|x|^{2}|a'''(x)|\le C_{a} 
  \langle x\rangle^{-1-\delta},
  \qquad
  \delta\in(0,1).
\end{equation}

\textsc{Assumptions on $b(x)$}.
The coefficients $b(x)=(b_{1},\dots,b_{n})$ are real valued
and the matrix 
$db(x):=
  [\partial_{j}b_{\ell}-\partial_{\ell}b_{j}]_{j,\ell=1}^{n}$
satisfies
\begin{equation}\label{eq:assdb}
  \textstyle
  |db(x)|\le \frac{C_{b}}{|x|^{2+\delta}+|x|^{2-\delta}}
\end{equation}

\textsc{Assumptions on $c(x)$}.
The potential $c(x)$ is real valued and satisfies
\begin{equation}\label{eq:assc}
  \textstyle
  -\frac{C_{-}^{2}}{|x|^{2+\delta}+|x|^{2-\delta}}
  \le
  c(x)
  \le
  \frac{C_{+}^{2}}{|x|^{2}}.
\end{equation}
Moreover we assume that $c(x)$ is \emph{repulsive} with respect
to the metric $a(x)$, meaning that
\begin{equation}\label{eq:assac}
  \textstyle
  a(x)x \cdot\nabla c\le 
  \frac{C_{c}}{|x|\langle x\rangle^{1+\delta}}
\end{equation}

\textsc{Assumptions on the domain}.
The domain $\Omega \subseteq\mathbb{R}^{n}$ is an 
exterior domain, i.e.
the complement of a compact and possibly empty set.
We assume that $\partial \Omega$ is $C^{1}$
and $a(x)$--\emph{starshaped},
meaning that at all points of $\partial \Omega$
the exterior normal $\vec\nu$ to $\partial\Omega$ satisfies
\begin{equation}\label{eq:assbdry}
  a(x)x\cdot\vec{\nu}\le0.
\end{equation}
When $a=I$,
\eqref{eq:assbdry} reduces to the condition that
$\omega$ is starshaped with respect to the origin.

\begin{remark}[Selfadjointness]
  \label{rem:selfadjoint}
  By the previous assumptions, the operator
  $L=-A^{b}+c$ is symmetric and
  satisfies the inequality
  \begin{equation*}
    (Lv,v)_{L^{2}(\Omega)}\ge
    \nu\|\nabla^{b}v\|_{L^{2}}-
      C_{-}^{2}
      \|(x|^{2+\delta}+|x|^{2-\delta})^{-1/2}v\|_{L^{2}(\Omega)}
    \qquad
    \forall v\in C_{c}(\Omega).
  \end{equation*}
  By the magnetic Hardy inequality
  \eqref{eq:hardymag1}, this implies that
  \begin{equation*}
    (Lv,v)_{L^{2}(\Omega)}\ge-C\|u\|_{L^{2}(\Omega)}^{2}.
  \end{equation*}
  Thus the operator $L$ has a
  selfadjoint Friedrichs extension on $L^{2}(\Omega)$,
  which is sufficient for our purposes.
  Actually, it possible to prove that $L$
  is essentially selfadjoint and hence the selfadjoint
  extension is unique: this follows by Chernoff's result
  \cite{Chernoff73-a}
  since the wave equation
  $u_{tt}+Lu=0$ has finite speed of propagation.
\end{remark}

Our first result is a \emph{homogeneous} 
smoothing resolvent estimate,
expressed in terms of the Morrey-Campanato type norms
\begin{equation*}
  \textstyle
  \|v\|_{\dot X}^{2}:=
  \sup\limits_{R>0}\frac{1}{R^{2}}\int_{\Omega\cap\{|x|=R\}}
  |v|^{2}dS,
  \qquad
  \|v\|_{\dot Y}^{2}:=
  \sup\limits_{R>0}\frac{1}{R}\int_{\Omega\cap\{|x|\le R\}}
  |v|^{2}dx,
\end{equation*}
while the $\dot Y^{*}$ norm is predual to the $\dot Y$ norm
(see an explicit characterization in Section \ref{sec:norms}).
Note that the result is only partially
satisfactory in the case $n=3$ which will be
considered in detail below (see Remark \ref{rem:suffcaseA}).

\begin{theorem}\label{the:1}
  Let $n\ge3$.
  Consider the Helmholtz equation \eqref{eq:helm}
  with Dirichlet boundary 
  conditions on an exterior domain $\Omega$
  as in \eqref{eq:assbdry}. Assume that
  \eqref{eq:assaabove}, \eqref{eq:assader},
  \eqref{eq:assdb},
  \eqref{eq:assc} and \eqref{eq:assac} hold. Finally, assume
  the ratio $N/\nu$ satisfies
  \begin{equation}\label{eq:assratio}
    \textstyle
    \frac{N}{\nu}\le
    \sqrt{\frac{n^{2}+2n+15}{6(n+2)}}
    \quad \text{for}\ 
    3\le n\le 46,
    \qquad \qquad
    \frac{N}{\nu}<
    \frac{3n-1}{n+3}
    \quad \text{for}\ 
    n\ge 47
  \end{equation}
  and the constants $C_{a},C_{-},C_{c},C_{b}$ are small enough that
  \begin{equation}\label{eq:smallC}
    \textstyle
    C_{a}(N+C_{a})\le \frac{K \delta}{24n},\quad
    C_{b}\le \frac{K \delta}{5N^{2}},\quad
    C_{-}\le \frac{K \delta}{18N(N+2)},\quad
    C_{c}\le K \delta,
  \end{equation}
  where 
  $K=\min\left\{1,\frac{\nu^{2}}{9},
  \frac{(n+3)\nu^{2}}{18}(\frac{3n-1}{n+3}-\frac N\nu)\right\}$.
  Then the solution to \eqref{eq:helm} satisfies
  \begin{equation}\label{eq:thesisA}
    \|v\|_{\dot X}^{2}
    +\|\nabla^{b}v\|_{\dot Y}^{2}
    \le
    M_{0}
    \|f\|_{\dot Y^{*}}^{2}
  \end{equation}
  and
  \begin{equation}\label{eq:thesisA2}
    \textstyle
    |\lambda|\|v\|_{\dot Y}^{2}
    \le
    2n^{2}(\nu+1)^{2}M_{0}
    \|f\|_{\dot Y^{*}}^{2},
    \qquad
    |\epsilon|\|v\|_{\dot Y}^{2}
    \le
    9(\nu+1)M_{0}
    \|f\|_{\dot Y^{*}}^{2}
  \end{equation}
  where $M_{0}=64n^{2}K^{-2}(\nu+1)^{2} (C_{+}+\nu+1)^{2}$.
\end{theorem}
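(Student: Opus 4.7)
The plan is to prove \eqref{eq:thesisA} by the multiplier method in the spirit of Morawetz, adapted to the metric $a(x)$, and then derive \eqref{eq:thesisA2} by pairing \eqref{eq:helm} with $\bar v$ on truncated balls. The core object is a radial multiplier of the form
\begin{equation*}
  \Phi v = \phi(r)\,\tfrac{a(x)x}{r}\cdot\nabla^{b} v + \psi(r) v,
  \qquad r=|x|,
\end{equation*}
where $\phi$ is a bounded, radially nondecreasing profile tailored to reproduce the $\dot Y$ norm (typically $\phi_{R}(r)=\min(r,R)/R$ followed by a supremum over $R>0$), and $\psi$ is proportional to $\phi/r$ with a dimensional coefficient chosen to balance the $|v|^{2}$ contributions. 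First I would take the real part of $\int_{\Omega}(-Lv+(\lambda+i\epsilon)v)\,\overline{\Phi v}\,dx=\int_{\Omega}f\,\overline{\Phi v}\,dx$ and integrate by parts twice, using the Dirichlet condition. The boundary integral reduces to $-\int_{\partial\Omega}\phi(r)\,(ax\cdot\vec\nu)\,|\partial^{b}_{\nu}v|^{2}\,dS$, non-positive by \eqref{eq:assbdry} and hence discarded. The interior identity takes the schematic form $Q(v)+R(v)=\Re\int_{\Omega} f\,\overline{\Phi v}\,dx$, with $Q(v)$ a quadratic form in $\nabla^{b}v$ and $v$ produced by the commutator of $\Phi$ with the constant-coefficient part of $L$, and $R(v)$ collecting the error terms carrying derivatives of $a,b,c$.

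The heart of the argument, and the source of \eqref{eq:assratio}, is the analysis of $Q(v)$. Decomposing $\nabla^{b}v$ into its metric-radial and metric-tangential parts, the principal Hessian produces a form of the type
\begin{equation*}
  \phi'(r)\,a\xi\cdot\bar\xi + \tfrac{\phi(r)-r\phi'(r)}{r}\Bigl(a\xi\cdot\bar\xi-\tfrac{|ax\cdot\xi|^{2}}{ax\cdot x}\Bigr) - \bigl(\tfrac12\Delta_{a}\psi(r)\bigr)|v|^{2},
\end{equation*}
plus a residual trace on spheres which, after supremum in $R$, produces the $\dot X$ norm. Condition \eqref{eq:assratio} is exactly the statement that, for the optimal dimensional choice of $\psi$, the full form is bounded below by $c\bigl(\|\nabla^{b}v\|_{\dot Y}^{2}+\|v\|_{\dot X}^{2}\bigr)$ with $c>0$; the regime change at $n=47$ reflects which of two competing constraints --- the angular quadratic form versus the $|v|^{2}$ piece --- becomes binding.

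Next I would absorb $R(v)$ into $Q(v)$. The terms involving $|a'|,|x||a''|,|x|^{2}|a'''|$ are bounded via \eqref{eq:assader} and pair naturally with $|\nabla^{b}v|^{2}$ or $|v|^{2}/|x|^{2}$; the magnetic error enters through $db$ and is absorbed via \eqref{eq:assdb}; the positive part of $c(x)$ is handled by the magnetic Hardy inequality cited in Remark~\ref{rem:selfadjoint}, the negative part by the $C_{-}$ bound in \eqref{eq:assc}, and the commutator $ax\cdot\nabla c$ by \eqref{eq:assac}. Each contribution is bounded by a constant multiple of the leading quadratic form, and the explicit smallness constants in \eqref{eq:smallC} are precisely what is needed to make the absorption work. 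The right-hand side is estimated by duality, $\Re\int f\,\overline{\Phi v}\le\|f\|_{\dot Y^{*}}\bigl(\|\phi\nabla^{b}v\|_{\dot Y}+\|\psi v\|_{\dot Y}\bigr)$, with the second factor controlled by $\|\nabla^{b}v\|_{\dot Y}+\|v\|_{\dot X}$ through a Hardy-type trace argument. Closing Cauchy--Schwarz yields \eqref{eq:thesisA}.

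Finally, for \eqref{eq:thesisA2} I would multiply \eqref{eq:helm} by $\bar v\,\one{|x|\le R}$ and integrate: the imaginary part gives $\epsilon\int_{|x|\le R}|v|^{2}=\Im\int_{|x|\le R}f\bar v + (\text{flux at }|x|=R)$, and the flux is bounded via trace by $\|\nabla^{b}v\|_{\dot Y}\|v\|_{\dot X}$, already controlled by \eqref{eq:thesisA}; the real part, combined with the upper bound in \eqref{eq:assc} and coercivity of the principal part, yields the $|\lambda|$-bound. The main obstacle throughout is the step where $Q(v)$ is shown to be coercive: the anisotropy of $a$ misaligns the geometric and metric radial directions, and the dimensional constants in \eqref{eq:assratio} and \eqref{eq:smallC} are precisely the price one pays to keep every quadratic contribution --- principal, angular, boundary, trace, and lower order --- simultaneously on the correct side of the inequality.
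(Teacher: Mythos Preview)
Your plan follows the paper's approach closely: the same radial Morawetz multiplier, the same boundary sign from \eqref{eq:assbdry}, the same absorption of lower-order errors via \eqref{eq:smallC}, and the same pairing with $\bar v\one{|x|\le R}$ for \eqref{eq:thesisA2}. Two points, however, deserve attention.

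\textbf{The $\epsilon$-term is not accounted for.} When you take the real part of $\int_{\Omega}(-Lv+(\lambda+i\epsilon)v)\,\overline{\Phi v}$, the gradient part of $\Phi$ produces a term
\[
  I_{\epsilon}\;=\;2\epsilon\,\Im\!\int_{\Omega} a\bigl(\nabla\psi,\nabla^{b}v\bigr)\,\bar v\,dx,
\]
which is neither part of the coercive form $Q(v)$ nor an error ``carrying derivatives of $a,b,c$''. Since $\epsilon$ is not small, this cannot be absorbed directly; a naive Cauchy--Schwarz gives $|\epsilon|\,\|v\|_{\dot Y}\|\nabla^{b}v\|_{\dot Y}$, which is uncontrolled. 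The paper devotes an entire step to this: one first bounds $|\epsilon|\|v\|_{\dot Y}^{2}$ and $\lambda_{+}\|v\|_{\dot Y}^{2}$ by pairing the equation with $\bar v$ on truncated balls (exactly the identities you invoke at the end for \eqref{eq:thesisA2}), and then feeds these bounds back into $I_{\epsilon}$ to reduce it to terms of the form $\|f\|_{\dot Y^{*}}(\|v\|_{\dot X}+\|\nabla^{b}v\|_{\dot Y})$ plus a small multiple of $\|v\|_{\dot X}^{2}+\|\nabla^{b}v\|_{\dot Y}^{2}$. So the auxiliary pairing is an \emph{ingredient} in the proof of \eqref{eq:thesisA}, not just a consequence of it; your logical order has to be inverted here.

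\textbf{The profile $\min(r,R)/R$ is too crude for the $|v|^{2}$ analysis.} The coercivity of the $|v|^{2}$-part comes from the sign of $-\tfrac12 A^{2}\psi$, and this is where \eqref{eq:assratio} enters. The paper's weight has $\psi'(r)=\tfrac12-\tfrac{R^{n-1}}{2n\,r^{n-1}}$ for $r>R$, chosen so that the quantity $S(x)$ (the part of $A^{2}\psi$ with no derivatives of $a$) splits cleanly: a favorable surface delta at $|x|=R$ producing the $\dot X$-norm, plus a tail on $|x|>R$ whose sign is governed precisely by the two inequalities in \eqref{eq:assratio} (one from the $\psi^{IV},\psi'''$ coefficients, one from the $\psi''/|x|^{2}-\psi'/|x|^{3}$ coefficient). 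With the piecewise-linear choice $\psi'=\min(r,R)/R$ the fourth-order structure degenerates and the constants do not line up with \eqref{eq:assratio}. The multiplier has to be tuned at this level of precision; otherwise your description of why the regime changes at $n=47$ remains heuristic.
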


\begin{remark}[]\label{rem:nontrapping}
  The use of multiplier methods has several advantages
  versus the phase space approach, besides simplicity.
  Indeed, one can prove sharp estimates in terms
  of Morrey-Campanato norms which are stronger than
  the usual weighted $L^{2}$ norms; in addition,
  one obtains quantitative bounds
  which would be impossible to prove when using e.g.~Fredholm
  theory. The technique used here was introduced in
  \cite{PerthameVega99-a},
  and then improved in
  \cite{BarceloRuizVega06-a},
  \cite{BarceloRuizVega09-a},
  \cite{DAnconaRacke12-a},
  \cite{BarceloFanelliRuiz13-a}, and
  the main novelty of the present paper is the
  adaptation of the method to a general elliptic
  operator with variable coefficients.
  In spirit, this paper is close to
  \cite{RodnianskiTao11-a}
  where a version of the multiplier method for manifolds
  is developed.

  However, in our opinion, the best feature of the method is
  the possibility to obtain explicit
  (although non sharp) criteria
  to check if an operator is nontrapping. 
  In addition, tracking the constants with precision allows to
  see their qualitative dependence on the parameters
  of the problem.
  As a simple example, consider the diagonal case
  \begin{equation*}
    \nabla \cdot(\alpha(x)\nabla u)+\frac{C}{|x|^{2}}u+zu=f
    \qquad\text{on}\quad \mathbb{R}^{4},
  \end{equation*}
  where $C\ge0$ and
  $\alpha(x):\mathbb{R}^{4}\to \mathbb{R}$ is a
  scalar function satisfying for some $\delta\in(0,1)$
  \begin{equation*}
    \textstyle
    1\le \alpha(x)\le \sqrt{\frac{13}{12}},\qquad
    |\alpha'|+|x||\alpha''|+|x|^{2}|\alpha'''|\le
    \frac{\delta}{900}\bra{x}^{-1-\delta}.
  \end{equation*}
  Under these conditions, all the assumptions of the
  Theorem are satisfied, and
  the smoothing estimates \eqref{eq:thesisA}, 
  \eqref{eq:thesisA2} are true.
\end{remark}

\begin{remark}[]\label{rem:potential}
  For high energies $\lambda>>1$
  the estimates are valid for the more general class 
  of potentials with Coulomb decay $|c(x)|\le \frac{C_{0}}{|x|}$.
  Indeed, if we perturb the equation with a term $c_{0}(x)v$:
  \begin{equation*}
    A^{b}v-c(x)v+(\lambda+i \epsilon)v=f-c_{0}(x)v=:\widetilde{f}(x)
  \end{equation*}
  and we apply the previous result, we obtain
  \begin{equation*}
    \|\nabla v\|_{\dot Y}
    +\sqrt{|\lambda|+|\epsilon|}\|v\|_{\dot Y}+\|v\|_{\dot X}
    -C\|c_{0}(x)v\|_{\dot Y^{*}}
    \le C\|f\|_{\dot Y^{*}}.
  \end{equation*}
  Since 
  \begin{equation*}
    \textstyle
    \|c_{0}v\|_{\dot Y^{*}}
    \lesssim
    \||x|c_{0}\|_{L^{\infty}}
    \|v\|_{\dot Y}
  \end{equation*}
  we see that we can absorb the negative term in the
  term $|\lambda|^{\frac12}\|v\|_{\dot Y}$, provided
  \begin{equation*}
    \textstyle
    \lambda^{\frac12} \gtrsim 
    \||x|c_{0}\|_{L^{\infty}}.
  \end{equation*}
  Note that a similar argument allows to absorb small
  first order perturbations of critical decay
  in the term $\|\nabla v\|_{\dot Y}$.
\end{remark}

\begin{remark}[]\label{rem:precisecond}
  For the proof of the Theorem, instead of the
  second condition in \eqref{eq:assratio}, it is sufficient
  to assume the following weaker pointwise inequality:
  \begin{equation}\label{eq:asscaseA}
    2|a(x)|_{HS}^{2}+\overline{a}^{2}
      -6 \overline{a}(x)\widehat{a}(x)
      +15 \widehat{a}^{2}(x)
      -12|a(x) \widehat{x}|^{2}\ge0
      \qquad
      \forall x\in \Omega.
  \end{equation}
  where $|a(x)|_{HS}$ is the Hilbert-Schmidt norm of the
  matrix $a(x)$,
  $\overline{a}(x)$ its trace,
  $\widehat{a}(x)=a(x)\widehat{x} \cdot \widehat{x}$
  and $\widehat{x}=x/|x|$. Since
  \begin{equation*}%
    2|a|_{HS}+\overline{a}^{2}
    -6 \overline{a}\widehat{a}+15 \widehat{a}^{2}
    -12|a \widehat{x}|^{2}\ge
    (2n+n^{2}+15)\nu^{2} -6 (n+2)N^{2}
  \end{equation*}
  we see that the second condition in \eqref{eq:assratio}
  implies \eqref{eq:asscaseA}.
\end{remark}

\begin{remark}\label{rem:suffcaseA}
  In the case $n=3$, assumption \eqref{eq:assratio}
  forces $\nu=N$ so that
  $a(x)$ must be a diagonal operator, and this is of
  course too restrictive for our purposes.
  There is not much to gain if we revert to the weaker
  assumption \eqref{eq:asscaseA}: for instance, if
  $a(x)=\mathop{\textrm{diag}}[1,1,1+\epsilon]$ and we choose
  $\widehat{x}=(0,0,1)$, the quantity in \eqref{eq:asscaseA}
  is equal to $-8 \epsilon$, thus generic
  small perturbations of $I$ are ruled out also under the
  weaker assumption.

  For this reason we complement
  Theorem \ref{the:1} with an additional result in which
  $a(x)$ is allowed to be any small perturbation of identity.
  The drawback is that we obtain a slightly weaker
  \emph{nonhomogeneous} estimate, which is expressed in
  terms of the norms
  \begin{equation*}
    \textstyle
    \|v\|_{X}^{2}:=
    \sup\limits_{R>0}\frac{1}{\langle R\rangle^{2}}
    \int_{\Omega\cap\{|x|=R\}}
    |v|^{2}dS,
    \qquad
    \|v\|_{Y}^{2}:=
    \sup\limits_{R>1}\frac{1}{R}\int_{\Omega\cap\{|x|\le R\}}
    |v|^{2}dx,
  \end{equation*}
  and the $Y^{*}$ norm, predual to the $Y$ norm.
  Then we have:
 \end{remark}

\begin{theorem}\label{the:2}
  Let $n=3$.
  Consider the Helmholtz equation \eqref{eq:helm}
  with Dirichlet boundary 
  conditions on an exterior domain $\Omega$
  as in \eqref{eq:assbdry}. 
  Assume that
  \eqref{eq:assaabove}, \eqref{eq:assader} hold and
  that $db(x)$ satisfies
  \begin{equation}\label{eq:assdbstrong}
    \textstyle
    |db(x)|\le \frac{C_{b}}{|x|^{2+\delta}+|x|}
  \end{equation}
  for some $\delta\in(0,1)$,
  while the electric potential $c(x)$ satisfies
  \begin{equation}\label{eq:asscstrong}
    \textstyle
    -\frac{C_{-}^{2}}{\bra{x}^{2+\delta}}
    \le
    c(x)
    \le
    \frac{C_{+}^{2}}{|x|^{2}}
  \end{equation}
  and
  \begin{equation}\label{eq:assacstrong}
    \textstyle
    a(x)x \cdot \nabla c\le
    \frac{C_{c}}{\langle x\rangle^{2+\delta}}.
  \end{equation}
  Finally, assume the principal part is close to identity
  in the following sense:
  \begin{equation}\label{eq:assperturb}
    |a(x)- I|\le  C_{I} \langle x\rangle^{-\delta},
    \qquad
    C_{I}<1/100
  \end{equation}
  and the constants $C_{a},C_{-},C_{I},C_{c},C_{b}$ are
  small enough that
  \begin{equation}\label{eq:smallCstrong}
    \textstyle
    C_{a}\le \frac{\delta}{48000},\quad
    C_{I}\le \frac{\delta}{7200},\quad
    C_{b}\le \frac{\delta}{920},\quad
    C_{-}\le \frac{\delta}{5500},\quad
    C_{c}\le \frac{\delta}{1300}.
  \end{equation}
  Then the solution to \eqref{eq:helm} satisfies the estimates
  \begin{equation}\label{eq:thesisB}
    \|v\|_{X}^{2}
    +\|\nabla^{b}v\|_{Y}^{2}
    \le
    10^{9}(C_{+}^{2}+1)
    \|f\|_{Y^{*}}^{2}
  \end{equation}
  and
  \begin{equation}\label{eq:thesisB2}
    |\lambda|\|v\|_{Y}^{2}
    \le
    10^{10}(C_{+}^{2}+1)^{2}
    \|f\|_{Y^{*}}^{2},
    \qquad
    |\epsilon|\|v\|_{Y}^{2}
    \le
    10^{10}
    (C_{+}^{2}+1) 
    \|f\|_{Y^{*}}^{2}.
  \end{equation}
\end{theorem}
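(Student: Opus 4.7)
The plan is to run the multiplier method used in Theorem \ref{the:1} with the crucial twist that, by \eqref{eq:assperturb}, we may regard the principal part as the flat Laplacian plus a perturbation: write
\[
\nabla^{b}\cdot(a(x)\nabla^{b}v)
= \nabla^{b}\cdot\nabla^{b}v + \nabla^{b}\cdot((a(x)-I)\nabla^{b}v)
\]
and treat the second summand as a small error of size $O(C_{I}\langle x\rangle^{-\delta})$. This move is essential in $n=3$: as Remark \ref{rem:suffcaseA} explains, \eqref{eq:assratio} effectively forces $a=I$, and by absorbing the deviation from identity through the smallness of $C_{I}$ we recover a flexible class of admissible coefficients at the price of producing a \emph{nonhomogeneous} estimate.

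Concretely, I would test \eqref{eq:helm} against a combination $Mv=\phi(r)\partial_{r}^{b}v+\psi(r)v$ with radial weights adapted to the inhomogeneous norms $X,Y$: take $\phi$ bounded and concave with $\phi(r)\sim r$ for $r\lesssim 1$ and $\phi(r)=O(1)$ for $r\gg 1$, and $\psi$ close to $\phi'/2+(n-1)\phi/(2r)$ so that in the flat case the resulting identity is manifestly coercive. Taking real and imaginary parts of $\int(-Lv+(\lambda+i\epsilon)v)\overline{Mv}\,dx$ and integrating by parts produces: (i) a bulk quadratic form in $\nabla^{b}v$ whose lower bound yields $\|\nabla^{b}v\|_{Y}^{2}$; (ii) a boundary integral on $\partial\Omega$ with the correct sign thanks to the starshapedness \eqref{eq:assbdry}; (iii) spherical traces on each $\{|x|=R\}$ which, after optimizing in $R>0$, yield $\|v\|_{X}^{2}$; (iv) the spectral terms $|\lambda|\|v\|_{Y}^{2}$ and $\epsilon\|v\|_{Y}^{2}$; and (v) perturbation errors linear in $C_{a},C_{I},C_{b},C_{-},C_{c}$ arising from the derivatives of $a$, from $db$, from the negative part of $c$, from $a x\cdot\nabla c$, and from the $(a-I)$ remainder. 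Duality against the $Y$ norm converts $\int f\,\overline{Mv}$ into $\|f\|_{Y^{*}}$ times the main positive quantities.

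The closure of the estimate is then a quantitative bookkeeping argument: using \eqref{eq:asscstrong}, \eqref{eq:assacstrong}, \eqref{eq:assdbstrong} and the magnetic Hardy inequality \eqref{eq:hardymag1} to control the negative parts of $c$ and $\nabla c$ by the $\nabla^{b}v$ quadratic form, one verifies that the explicit bounds \eqref{eq:smallCstrong} are exactly what is needed for each error to be dominated by a fixed fraction of the main positive terms, yielding \eqref{eq:thesisB}; testing instead with $Mv=v$ and looking at the imaginary part produces $|\epsilon|\|v\|_{Y}^{2}\lesssim \|f\|_{Y^{*}}\|v\|_{Y}$, and combining real and imaginary parts gives the $|\lambda|$ bound. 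The main obstacle I anticipate is precisely the dimension $n=3$: the Hardy constant $(n-2)^{2}/4=1/4$ is small, so the margin for absorbing the perturbations is narrow, and the weights $\phi,\psi$ must be matched carefully at the transition $r\sim 1$ (where the obstacle sits) so as to be simultaneously compatible with the Dirichlet trace, the starshaped sign, and the supremum over $R>0$ defining the $X$ norm. This rigidity, together with the need for fully explicit universal constants, is what forces the very tight numerical thresholds in \eqref{eq:smallCstrong}; no deeper structural difficulty is expected, but the bookkeeping is delicate.
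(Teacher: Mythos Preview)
Your core insight is right: the point is to exploit $|a-I|\le C_{I}\langle x\rangle^{-\delta}$ to rescue the $n=3$ case, and to pay for this by working in the nonhomogeneous norms $X,Y$ and taking the supremum over $R>1$ rather than $R>0$. But the paper organizes the perturbation differently from what you describe, and the difference matters for the boundary term.

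You propose to test with a \emph{flat} radial multiplier $\phi(r)\partial_{r}^{b}v+\psi(r)v$ and push all of $a-I$ into a global error $\nabla^{b}\cdot((a-I)\nabla^{b}v)$. The paper instead keeps the full variable-coefficient multiplier $[A^{b},\psi]\overline{v}=(A\psi)\overline{v}+2a(\nabla\psi,\nabla^{b}v)$ throughout the identity \eqref{eq:morid}. This buys two things. First, the boundary integral on $\partial\Omega$ comes out exactly as $\int_{\partial\Omega}|\vec\nu\cdot\nabla v|^{2}\,a(\vec\nu,\vec\nu)\,a(\widehat{x},\vec\nu)\,\psi'\,dS$, whose sign is controlled precisely by the $a$-starshapedness hypothesis \eqref{eq:assbdry}; with a flat multiplier the boundary term would involve $\widehat{x}\cdot\vec\nu$ instead, and $a(x)x\cdot\vec\nu\le0$ does \emph{not} imply $x\cdot\vec\nu\le0$ even for $a$ close to $I$. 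Second, the quadratic form in $\nabla^{b}v$ (the $\alpha_{\ell m}$ term) is nonnegative by pure matrix algebra, with no need to invoke smallness of $a-I$. The \emph{only} place where the paper writes $a=I+q$ and uses $|q|\le C_{I}\langle x\rangle^{-\delta}$ is in estimating the scalar quantity $S(x)$, the principal part of $A^{2}\psi$: for $n=3$ the combination $2|a|_{HS}^{2}+\overline{a}^{2}-6\overline{a}\widehat{a}+15\widehat{a}^{2}-12|a\widehat{x}|^{2}$ fails to be nonnegative in general, but expanding in $q$ shows it is $\ge -46\,C_{I}\langle x\rangle^{-\delta}$, which is then absorbed. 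Finally, the restriction to $R>1$ does not directly give $\|v\|_{X}$ (which requires all $R>0$); the paper closes this gap with the Hardy-type estimate \eqref{eq:stnorma7}, which bounds the small-$R$ surface integrals by the $R=1$ trace plus $\|\nabla^{b}v\|_{Y}^{2}$.
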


We conclude the Introduction with 
some implications of the previous estimates
for equations of Schr\"{o}dinger and wave type connected
to the operator $A^{b}-c$.

\subsection{Applications}\label{sub:applications}

The estimates in
Theorems \ref{the:1} and \ref{the:2} have several
applications. Natural consequences are
a limiting absorption principle for the operator 
$L=-A^{b}+c$,
the absence of embedded or zero eigenvalues and 
resonances, and the existence 
and uniqueness of solutions for the Helmholtz equation
$(-L+\lambda)v=f$
under a Sommerfeld radiation condition at infinity.
We shall study these and related questions in a
forthcoming paper; here we will focus on the applications
to dispersive evolution equations connected to the operator 
$L=-A^{b}+c$. 

Namely, we consider the Schr\"{o}dinger equation
\begin{equation}\label{eq:schro}
  iu_{t}-A^{b}u+c(x)u=F(t,x),
  \qquad
  u(0,x)=u_{0}(x)
\end{equation}
and the wave equation
\begin{equation}\label{eq:wave}
  u_{tt}-A^{b}u+c(x)u=F(t,x),
  \qquad
  u(0,x)=u_{0}(x),
  \quad
  u_{t}(0,x)=u_{1}(x)
\end{equation}
associated with $L$. Smoothing and decay properties of
solutions are best expressed in terms of the corresponding
\emph{Schr\"{o}dinger flow} $e^{itL}$
and
\emph{wave flow} $e^{it \sqrt{L}}$.
Kato's smoothing theory \cite{Kato65-b}
(see also \cite{KatoYajima89-a})
provides a direct
link between resolvent estimates and smoothing estimates for
the Schr\"{o}dinger flow. Actually it is possible to prove
that estimates for the full resolvent,
\emph{supersmoothing} estimates in the terminology of
Kato--Yajima, are equivalent to
\emph{nonhomogeneous} estimates (like
\eqref{eq:smooschrnh1}, \eqref{eq:smooschrnh2} in the following
Corollary), while the \emph{homogeneous} estimates of the form
\eqref{eq:smooschr} are equivalent to weaker estimates for the
imaginary part of the resolvent, which are properly called
\emph{smoothing} estimates. 
A detailed exposition of the theory, together with an extension
which allows to include wave and Klein-Gordon equations
at the same level of generality, can be found in
\cite{DAncona14-a}.
An additional consequence of the resolvent estimates is the
pointwise decay in time of local norms of the solution,
which can be intepreted as a stronger form of the
RAGE theorem of \cite{Ruelle69-a} for these flows.

In order to simplify the exposition, we shall make
two additional hypotheses on the operator $L$
(but see also Remarks \ref{rem:directmorawetxSch} and
\ref{rem:directmorawetxWE}):

\textsc{Positivity of $L$}.
We assume that $L$ is a \emph{positive} operator,
i.e., $(Lv,v)_{L^{2}(\Omega)}\ge0$ for all $v$ in the
domain of $L$. Note that in view of 
the magnetic Hardy inequality \eqref{eq:hardymag1}
proved below and the 
previous assumptions on the coefficients, we have
\begin{equation*}
  \textstyle
  (Lu,u)_{L^{2}(\Omega)}\ge 
  \nu\|\nabla^{b}u\|_{L^{2}(\Omega)}^{2}
    -C_{-}^{2}\||x|^{-1}u\|^{2}_{L^{2}(\Omega)}
  \ge
  (\frac{(n-2)^{2}}{4}\nu-C_{-}^{2})\||x|^{-1}u\|
      ^{2}_{L^{2}(\Omega)}
\end{equation*}
thus in order to have $L\ge0$ (and actually coercive)
it is sufficient to assume that 
\begin{equation}\label{eq:posL}
  C_{-}< \frac{n-2}{2}\sqrt{\nu}.
\end{equation}

\textsc{Weighted elliptic estimate}. We assume that
the operator $L$ satisfies the weighted $L^{2}$
estimates
\begin{equation}\label{eq:ellest}
  \|\bra{x}^{-\frac12-\sigma}L^{\frac12}v\|_{L^{2}(\Omega)}
  \lesssim
  \|\bra{x}^{-\frac12-\sigma'}
      (-\Delta)^{\frac12}v\|_{L^{2}(\Omega)}
\end{equation}
for $\sigma>\sigma'>0$ close to 0. Note that the right hand
side in \eqref{eq:ellest} is equivalent to
$\|\bra{x}^{-\frac12-\sigma'}\nabla v\|_{L^{2}(\Omega)}$
since $\bra{x}^{-\frac12-}$ is an $A_{2}$ weight. 
This assumption is not too restrictive: indeed, this
kind of elliptic estimates is known in several cases
(see e.g.~\cite{RodnianskiTao11-a} for elliptic operators
with smooth coefficients on $\mathbb{R}^{n}$).
More generally, the estimate 
follows directly from, or can be proved by the techniques of
\cite{CacciafestaDAncona12-a} 
for any selfadjoint operator
$L$ whose heat kernel $e^{tL}$ satisfies an upper gaussian
estimate; this covers the case of elliptic operators on
exterior domains whith bounded coefficients
\cite{Ouhabaz05-a} and magnetic Schr\"{o}dinger operators
with singular coefficients on $\mathbb{R}^{n}$
\cite{Kurata00-a}.

Then we can prove the following results:

\begin{corollary}[]\label{cor:smooschr}
  Assume the selfadjoint operator $L=-A^{b}+c$ with Dirichlet 
  b.c.~on $L^{2}(\Omega)$ and the exterior domain $\Omega$
  satisfy the assumptions of Theorem \ref{the:1}.
  Assume in addition \eqref{eq:posL} and \eqref{eq:ellest}.
  Then we have the estimates
  \begin{equation}\label{eq:smooschr}
    \|\bra{x}^{-1-}e^{itL}f\|
          _{L^{2}_{t}L^{2}_{x}(\Omega)}+
    \|\bra{x}^{-\frac12-}L^{\frac14}e^{itL}f\|
          _{L^{2}_{t}L^{2}_{x}(\Omega)}
    \lesssim\|f\|_{L^{2}(\Omega)},
  \end{equation}
  \begin{equation}\label{eq:smooschrnh1}
    \textstyle
    \|\int_{0}^{t}\bra{x}^{-1-}e^{i(t-s)L}Fds\|
          _{L^{2}_{t}L^{2}_{x}(\Omega)}
    \lesssim
    \|\bra{x}^{1+}F\|
          _{L^{2}_{t}L^{2}_{x}(\Omega)},
  \end{equation}
  \begin{equation}\label{eq:smooschrnh2}
    \textstyle
    \|\int_{0}^{t}\bra{x}^{-\frac12-}
          L^{\frac12}e^{i(t-s)L}Fds\|
          _{L^{2}_{t}L^{2}_{x}(\Omega)}
    \lesssim
    \|\bra{x}^{\frac12+} F\|
          _{L^{2}_{t}L^{2}_{x}(\Omega)}i.
  \end{equation}
  Moreover, for every $f\in L^{2}(\Omega)$,
  we have the RAGE type property
  \begin{equation}\label{eq:ragesch}
    \|\bra{x}^{-\frac12-}e^{itL}f\|_{L^{2}(\Omega)}\to0
    \quad\text{as}\quad 
    t\to\pm \infty.
  \end{equation}
\end{corollary}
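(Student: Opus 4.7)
The plan is to reduce the four statements of the Corollary to the stationary resolvent estimates of Theorem~\ref{the:1} via the Kato--Yajima smoothing machinery, as recalled in the abstract form of \cite{DAncona14-a}. Assumption \eqref{eq:posL} and Remark~\ref{rem:selfadjoint} ensure $L$ is selfadjoint and positive, so $\sigma(L)\subset[0,\infty)$, the group $e^{itL}$ is unitary on $L^{2}(\Omega)$, and the fractional powers $L^{1/4}$, $L^{1/2}$ are defined by the spectral theorem.

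The first step is to convert Theorem~\ref{the:1} into weighted $L^{2}$ bounds for the resolvent $R_{L}(z)=(L-z)^{-1}$. Elementary dyadic computations give the one-sided embeddings $\|\langle x\rangle^{-\frac12-}g\|_{L^{2}}\lesssim\|g\|_{\dot Y}$ (and the analogue for $\dot X$), whose dual yields $\|f\|_{\dot Y^{*}}\lesssim\|\langle x\rangle^{\frac12+}f\|_{L^{2}}$. Inserting these into the conclusions of Theorem~\ref{the:1} produces uniform weighted-$L^{2}$ bounds for $R_{L}(z)$ and for $\nabla^{b}R_{L}(z)$. Using then the weighted elliptic estimate \eqref{eq:ellest}, after expanding $\nabla=\nabla^{b}-ib$ and absorbing the first-order error $bv$ through \eqref{eq:assdb} and the magnetic Hardy inequality \eqref{eq:hardymag1}, the gradient bound upgrades to a bound on $\|\langle x\rangle^{-\frac12-}L^{1/2}R_{L}(z)f\|_{L^{2}}$. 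The functional calculus identity $L^{1/4}R_{L}(z)L^{1/4}=L^{1/2}R_{L}(z)$ then lets me distribute this symmetrically as a $TT^{*}$ pairing, yielding
\begin{equation*}
  \sup_{z\notin\mathbb{R}}\bigl\|\Gamma_{j} R_{L}(z)\Gamma_{j}^{*}\bigr\|_{L^{2}\to L^{2}}<\infty
  \qquad\text{for}\qquad
  \Gamma_{1}=\langle x\rangle^{-1-},\quad
  \Gamma_{2}=\langle x\rangle^{-\frac12-}L^{\frac14}.
\end{equation*}

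With these two supersmoothness bounds in hand, the abstract Kato--Yajima theorem of \cite{DAncona14-a} simultaneously delivers the homogeneous smoothing inequalities $\|\Gamma_{j}e^{itL}f\|_{L^{2}_{t,x}}\lesssim\|f\|_{L^{2}}$, which amount to \eqref{eq:smooschr}, and the corresponding $TT^{*}$ Duhamel bounds $\|\Gamma_{j}\int_{0}^{t}e^{i(t-s)L}\Gamma_{j}^{*}F\,ds\|_{L^{2}_{t,x}}\lesssim\|F\|_{L^{2}_{t,x}}$, which give \eqref{eq:smooschrnh1} and \eqref{eq:smooschrnh2} (for the latter one also splits $L^{1/2}=L^{1/4}\cdot L^{1/4}$ and moves one factor onto each side of the resolvent, then invokes the Christ--Kiselev lemma to insert the truncation at time $t$).

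Finally, \eqref{eq:ragesch} is a soft consequence of the smoothing bound combined with the absence of embedded or zero eigenvalues of $L$: the uniform supersmoothness of $\Gamma_{1}$ allows passage to the limit $\epsilon\to 0$ in the resolvent, yielding a limiting absorption principle that rules out eigenvalues on $[0,\infty)$, so that $L$ has purely absolutely continuous spectrum. Since $\Gamma_{1}e^{itL}f\in L^{2}_{t}L^{2}_{x}$ by \eqref{eq:smooschr}, the slice $\|\Gamma_{1}e^{itL}f\|_{L^{2}_{x}}$ must tend to zero along some sequence, and pure absolute continuity of the spectrum upgrades this to convergence as $|t|\to\infty$. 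The main technical obstacle I expect to encounter lies in the weighted comparison between $\nabla^{b}$ and $L^{1/2}$: although \eqref{eq:ellest} is the right tool, one must carefully verify that the magnetic first-order term $ib\cdot v$ produced by $\nabla=\nabla^{b}-ib$ is absorbable on the right-hand side, which relies crucially on the sharp decay \eqref{eq:assdb} of $b$ and on the $A_{2}$ character of the power weights $\langle x\rangle^{-\frac12-\sigma}$ that enables \eqref{eq:ellest} in the form stated.
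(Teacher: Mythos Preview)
Your approach to the smoothing estimates \eqref{eq:smooschr}--\eqref{eq:smooschrnh2} matches the paper's: convert the Morrey--Campanato bounds of Theorem~\ref{the:1} into weighted $L^{2}$ resolvent bounds via the embeddings of Section~\ref{sec:norms}, invoke the elliptic estimate \eqref{eq:ellest} to pass from $\nabla$ to $L^{1/2}$, and apply Kato--Yajima supersmoothness. One small point you omit: to obtain the \emph{symmetric} bound $\|\langle x\rangle^{-1-}R(z)\langle x\rangle^{-1-}\|<\infty$ from the asymmetric estimate $\|\langle x\rangle^{-3/2-}R(z)f\|\lesssim\|\langle x\rangle^{1/2+}f\|$ that Theorem~\ref{the:1} actually delivers, the paper inserts a complex interpolation step (between the estimate and its dual). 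This is routine but worth making explicit. Your invocation of Christ--Kiselev for the retarded Duhamel bound is unnecessary, since Kato's supersmoothness theorem already yields the truncated-in-time estimate directly.

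Your argument for the RAGE property \eqref{eq:ragesch}, however, has a genuine gap. You assert that $L^{2}_{t}$ integrability of $t\mapsto\|\langle x\rangle^{-1/2-}e^{itL}f\|_{L^{2}_{x}}$ gives convergence to zero along a subsequence, and that ``pure absolute continuity of the spectrum upgrades this to convergence as $|t|\to\infty$''. But absolute continuity alone yields only decay in Ces\`aro mean (the classical RAGE theorem), not pointwise decay, and $L^{2}_{t}$ membership of a nonnegative function does not imply it tends to zero without additional regularity. The paper closes this gap by a direct computation: for $f\in D(L)$ one writes
\begin{equation*}
  (e^{i(t+1)L}f,g)=\int_{t}^{t+1}\partial_{s}\bigl[(s-t)(e^{isL}f,g)\bigr]\,ds
\end{equation*}
and bounds the right-hand side, via Cauchy--Schwarz, by the \emph{tail} $\bigl(\int_{t}^{\infty}\|\langle x\rangle^{-1/2-}e^{isL}Lf\|^{2}+\|\langle x\rangle^{-1/2-}e^{isL}f\|^{2}\,ds\bigr)^{1/2}$, which is small for large $t$ since the full time integral is finite by \eqref{eq:smooschr}. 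Density in $L^{2}$ then gives the result for all $f$. Your outline could be repaired along these lines (for $f\in D(L)$ the function $t\mapsto\|\langle x\rangle^{-1/2-}e^{itL}f\|^{2}$ has bounded derivative, so $L^{1}_{t}$ plus Lipschitz forces pointwise decay), but the step as currently stated is not justified.
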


\begin{remark}[]\label{rem:directmorawetxSch}
  Note that we can also apply the Morawetz multiplier method
  directly to equation \eqref{eq:schro}, instead of
  using Kato's theory. This approach does not require
  assumptions \eqref{eq:posL}, \eqref{eq:ellest} and
  gives an estimate of the form
  \begin{equation*}
    \|\nabla e^{itL}f\|_{\dot Y L^{2}_{t}}
      +\|e^{itL}f\|_{\dot X L^{2}_{t}}
    \lesssim
    \|f\|_{\dot H^{\frac12}},
  \end{equation*}
  but it does not seem easy to transfer the $1/2$ derivative
  from the right to the left hand side.
  Note also that the norms at the l.h.s.~are of reversed type,
  with an inner integration in $t$.
  However, this method can be used to prove
  \emph{interaction Morawetz} estimates, which will be the
  topic of a forthcoming paper.
\end{remark}

\begin{remark}[]\label{rem:strich}
  Assume, in addition to the above, that the operator
  $L$ coincides with the Laplacian $-\Delta$ outside a bounded
  set, and moreover that the local in time 
  (non endpoint) Strichartz estimates hold for solutions
  of equation \eqref{eq:schro} which are compactly supported
  in space. Then, by a well know procedure due to Burq
  \cite{Burq03-a}, it is possible to deduce from the
  smoothing estimate \eqref{eq:smooschr} the full set of
  \emph{global in time} (non endpoint) Strichartz estimates
  for the same equation.

  The same remark applies to the wave equation
  \eqref{eq:wave} which we consider next.
\end{remark}

\begin{corollary}[]\label{cor:smoowe}
  Assume  $L=-A^{b}+c$ and $\Omega$ are
  as in the previous corollary. Then we have the estimates
  \begin{equation}\label{eq:smoowe1}
    \|\bra{x}^{-\frac12-}e^{it \sqrt{L}}f\|
        _{L^{2}_{t}L^{2}_{x}(\Omega)}
    \lesssim
    \|f\|_{L^{2}(\Omega)},
  \end{equation}
  \begin{equation}\label{eq:smoowe2}
    \|\bra{x}^{-1-}e^{it \sqrt{L}}f\|
        _{L^{2}_{t}L^{2}_{x}(\Omega)}
    \lesssim
    \|f\|_{\dot H^{\frac12}(\Omega)},
  \end{equation}
  \begin{equation}\label{eq:smoowenh1}
    \textstyle
    \|\int_{0}^{t}\bra{x}^{-\frac12-}
           e^{i(t-s)\sqrt{L}}Fds\|
          _{L^{2}_{t}L^{2}_{x}(\Omega)}
    \lesssim
    \|\bra{x}^{\frac12+} F\|
          _{L^{2}_{t}L^{2}_{x}(\Omega)}.
  \end{equation}
  \begin{equation}\label{eq:smoowenh2}
      \textstyle
      \|\int_{0}^{t}\bra{x}^{-1-}e^{i(t-s)\sqrt{L}}Fds\|
            _{L^{2}_{t}L^{2}_{x}(\Omega)}
      \lesssim
      \|\bra{x}^{1+}L^{\frac12}F\|
            _{L^{2}_{t}L^{2}_{x}(\Omega)},
  \end{equation}
  Moreover, for every $f\in L^{2}(\Omega)$,
  we have the RAGE type property
  \begin{equation}\label{eq:ragewe}
    \|\bra{x}^{-\frac12-}e^{it \sqrt{L}}f\|_{L^{2}(\Omega)}\to0
    \quad\text{as}\quad 
    t\to\pm \infty.
  \end{equation}
\end{corollary}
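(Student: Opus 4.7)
The plan is to mirror the strategy of Corollary \ref{cor:smooschr}, applying Kato's smoothing theory to $\sqrt{L}$ in place of $L$. Since $L\geq 0$ by \eqref{eq:posL}, $\sqrt{L}$ is a well-defined self-adjoint operator whose spectral resolution satisfies $E'_\mu = E_{\mu^2}$. Stone's formula then gives the key identity
\begin{equation*}
    \mathrm{Im}\,(\sqrt{L} - \mu - i0)^{-1} = 2\mu\,\mathrm{Im}\,(L - \mu^2 - i0)^{-1}, \qquad \mu \geq 0,
\end{equation*}
which reduces resolvent estimates for $\sqrt{L}$ at frequency $\mu$ to those for $L$ at $\lambda=\mu^2$, supplied by Theorem \ref{the:1}. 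A similar identity holds for $\varepsilon>0$ via the algebraic relation $(\sqrt{L}-z)^{-1}=(\sqrt{L}+z)(L-z^2)^{-1}$.

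For the homogeneous bound \eqref{eq:smoowe1}, Kato's smoothness criterion requires the uniform bound
\begin{equation*}
    \sup_{\mu,\varepsilon}\,\|\bra{x}^{-1/2-}\,\mathrm{Im}\,(\sqrt{L}-\mu-i\varepsilon)^{-1}\,\bra{x}^{-1/2-}\|_{L^2\to L^2} < \infty.
\end{equation*}
Via the identity above this becomes $\sup_\mu \|\bra{x}^{-1/2-}\cdot 2\mu\,\mathrm{Im}\,(L-\mu^2-i\varepsilon')^{-1}\bra{x}^{-1/2-}\|_{L^2\to L^2}$; and by \eqref{eq:thesisA2} applied at $\lambda=\mu^2$ we control $\mu\|(L-\mu^2-i\varepsilon')^{-1}f\|_{\dot Y}$ by $\|f\|_{\dot Y^*}$ uniformly. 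Combined with the dyadic embeddings $\bra{x}^{-1/2-\sigma}L^2 \hookrightarrow \dot Y^*$ and $\dot Y \hookrightarrow \bra{x}^{1/2+\sigma}L^2$ (valid for any $\sigma>0$), this yields the needed $L^2\to L^2$ bound and hence \eqref{eq:smoowe1} by Kato's theorem.

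For \eqref{eq:smoowe2}, writing $f=L^{1/4}g$ with $g\in L^2$ so that $\|f\|_{\dot H^{1/2}}\sim\|g\|_{L^2}$ (using the elliptic estimate \eqref{eq:ellest} to identify $L$-Sobolev with $\nabla$-Sobolev), the statement is equivalent to $\sqrt{L}$-smoothness of $\bra{x}^{-1-}L^{1/4}$. On the spectral support $L^{1/4}$ contributes a factor $\mu^{1/2}$, so the relevant bound $\|\bra{x}^{-1-}\cdot 2\mu^{3/2}\,\mathrm{Im}\,(L-\mu^2)^{-1}\bra{x}^{-1-}\|_{L^2\to L^2}\lesssim 1$ now follows from the stronger $\dot X$ bound in \eqref{eq:thesisA} together with $\mu\|v\|_{\dot Y}\lesssim\|f\|_{\dot Y^*}$ and the embedding $\dot X \hookrightarrow \bra{x}^{-1-}L^2$. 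The nonhomogeneous estimates \eqref{eq:smoowenh1}--\eqref{eq:smoowenh2} come from the Kato--Yajima equivalence of nonhomogeneous smoothing with boundedness of the \emph{full} resolvent (not merely its imaginary part), which is reduced to Theorem \ref{the:1} through the same algebraic identity $(\sqrt{L}-z)^{-1}=(\sqrt{L}+z)(L-z^2)^{-1}$ combined with the $\|\nabla^b v\|_{\dot Y}$ control.

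Finally, \eqref{eq:ragewe} follows from \eqref{eq:smoowe1} by the standard argument: on a dense subset of smooth data, $t\mapsto \|\bra{x}^{-1/2-}e^{it\sqrt{L}}f\|_{L^2}$ is uniformly continuous and $L^2_t$-integrable, hence vanishes as $|t|\to\infty$, and uniform boundedness of the flow extends the conclusion to all $f\in L^2(\Omega)$. The principal technical obstacle is the careful bookkeeping of weights and powers of $\mu$ in converting the Morrey-Campanato estimates of Theorem \ref{the:1} into weighted-$L^2$ resolvent bounds for $\sqrt{L}$, especially the interplay between the $L^{1/4}$ factor and the change of variable $\lambda=\mu^2$; assumption \eqref{eq:ellest} is what ensures that $L$-based Sobolev norms behave as $\nabla$-based ones throughout the transfer.
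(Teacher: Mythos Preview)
Your approach is essentially the same as the paper's, though packaged differently. The paper invokes a black-box abstract result (a special case of Corollary~2.5 in \cite{DAncona14-a}): if $A$ is $L$-supersmooth then $AL^{-1/4}$ is $\sqrt{L}$-supersmooth. Applied with $A=\bra{x}^{-1-}$ and $A=\bra{x}^{-1/2-}L^{1/4}$ (both shown $L$-supersmooth in Corollary~\ref{cor:smooschr}), this yields at once that $\bra{x}^{-1-}L^{-1/4}$ and $\bra{x}^{-1/2-}$ are $\sqrt{L}$-supersmooth, giving \eqref{eq:smoowe1}--\eqref{eq:smoowenh2}. What you do---the spectral identity $\mathrm{Im}\,(\sqrt{L}-\mu-i0)^{-1}=2\mu\,\mathrm{Im}\,(L-\mu^{2}-i0)^{-1}$ and the algebraic relation $(\sqrt{L}-z)^{-1}=(\sqrt{L}+z)(L-z^{2})^{-1}$---is exactly the content of that abstract result, unwound by hand. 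Your RAGE argument (uniform continuity plus $L^{2}_{t}$-integrability) is a valid alternative to the paper's averaging trick.

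There is, however, a slip in your treatment of \eqref{eq:smoowe2}. Writing $f=L^{1/4}g$ gives $\|f\|_{\dot H^{1/2}_{L}}=\|L^{1/4}f\|_{L^{2}}=\|L^{1/2}g\|_{L^{2}}$, not $\|g\|_{L^{2}}$; the correct substitution is $g=L^{1/4}f$, i.e.\ $f=L^{-1/4}g$, so the operator whose $\sqrt{L}$-smoothness you need is $\bra{x}^{-1-}L^{-1/4}$, not $\bra{x}^{-1-}L^{+1/4}$. With the wrong sign your $\mu$-bookkeeping produces $2\mu^{3/2}$ (and in fact $2\mu^{2}$ once you account for $L^{1/4}$ on both sides of $ARA^{*}$), which is \emph{not} controlled uniformly by Theorem~\ref{the:1}. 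With the correct sign the two factors of $L^{-1/4}\sim\mu^{-1/2}$ exactly cancel the $2\mu$ from the change of spectral variable, and the required bound reduces to the $L$-smoothness of $\bra{x}^{-1-}$ already established---no residual $\mu$-dependence, and no need to invoke the $\dot X$ norm here.
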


\begin{remark}[]\label{rem:directmorawetxWE}
  As for the Schr\"{o}dinger equation, a direct application
  of the multiplier method to the wave equaiton
  \eqref{eq:wave} gives an estimate of the form
  \begin{equation*}
    \|\nabla e^{itL}f\|_{\dot Y L^{2}_{t}}
      +\|e^{{it \sqrt{L}}}f\|_{\dot X L^{2}_{t}}
    \lesssim
    \|f\|_{\dot H^{1}}.
  \end{equation*}
\end{remark}

\begin{remark}[]\label{rem:bibliosmoo}
  There is a large literature on smoothing properties
  for Schr\"{o}dinger, wave and Dirac equations with 
  electric and magnetic potentials;
  some results are available also
  in the case of fully variable coefficients,
  both local and global in time
  (see e.g.
  \cite{Yajima90-a},
  \cite{Pravica93-a},
  \cite{CraigKappelerStrauss95-a},
  \cite{KapitanskiSafarov96-a},
  \cite{RobbianoZuily98-b},
  \cite{Doi00-a},
  \cite{SmithSogge00-a},
  \cite{Alinhac06-a},
  \cite{MarzuolaMetcalfeTataru08-a},
  \cite{RodnianskiTao11-a},
  \cite{Tataru13-a},
  \cite{VasyWunsch13-a},
  \cite{DAnconaFanelli07-a},
  \cite{CacciafestaDAncona13-a}
  and the references therein).
  On the other hand, the case of exterior domains is less
  studied: we mention at least the papers
  \cite{Tsutsumi93-a},
  \cite{BurqGerardTzvetkov04-c},
  \cite{MetcalfeSogge06-a},
  \cite{RobbianoZuily09-a},
  \cite{MetcalfeTataru09-a}.
\end{remark}

\section{Basic identities}\label{sec:identities}

In this section we show how to adapt the method 
of Morawetz multipliers
to a general elliptic operator with variable coefficients.
Using implicit summation over repeated indices,
we can write the principal part in the form
\begin{equation*}
  A^{b}v:=\nabla^b\cdot(a(x)\nabla^bv)=
  \partial_j^b(a_{jk}(x)\partial_k^bv).
\end{equation*}
We also write
\begin{equation*}
  Av:=\nabla\cdot(a(x)\nabla v)=
  \partial_j(a_{jk}(x)\partial_kv)
\end{equation*}
and we recall the notations
\begin{equation*}
  \widehat{x}=\frac{x}{|x|},\qquad
  a(w,z)=a_{jk}(x)w_k\overline{z}_j.
\end{equation*}
The appropriate multiplier for the operator $A^b$ is the quantity
\begin{equation}\label{eq:mormult}
  [A^{b},\psi]\overline{v}=(A \psi)\overline{v}
  +2a(\nabla \psi,\nabla^{b}v).
\end{equation}
The following identities are based on the multiplier
\eqref{eq:mormult} and the simpler multiplier 
$\phi \overline{v}$:

\begin{proposition}\label{pro:mor}
  Assume $a=[a_{jk}]$ is symmetric and real valued and
  $b$ is real valied,
  while $V(x)$ and $v(x)$ may be complex valued,
  and all functions are sufficiently smooth. 
  Then, denoting with $Q_{j}$ and $P_{j}$ the quantities
  \begin{equation*}
    Q_{j}=
    a_{jk}\partial^{b}_{k}v \cdot 
      [A^{b},\psi]\overline{v}
      -\textstyle\frac12 a_{jk}(\partial_{k}A \psi)|v|^{2}
      -a_{jk}\partial_{k}\psi 
      \left[V|v|^{2}+a(\nabla^{b}v,\nabla^{b}v)\right]
  \end{equation*}
  and
  \begin{equation}
    P_{j}=
    a_{jk}\partial^{b}_{k}v\phi \overline{v}
       -\textstyle \frac12 a_{jk}\partial_{k}\phi|v|^{2}
  \end{equation}
  for arbitrary real weights $\psi(x),\phi(x)$,
  the following identities hold:
  \begin{equation}\label{eq:id1}
  \begin{split}
    \Re \partial_{j}Q_{j}=
    &
    \Re (A^{b}v-Vv)
    [A^{b},\psi]\overline{v}
    \\
    &
    -\textstyle \frac12 A^{2}\psi|v|^{2}
    + \Re(\alpha_{\ell m}\ 
      \partial^{b}_{m}v\ \overline{\partial^{b}_{\ell}v})
    -  \Re a(\nabla \psi,\nabla V)|v|^{2}
    \\
    &
    +2\Im(a_{jk}\partial^{b}_{k}v
       (\partial_{j}b_{\ell}-\partial_{\ell}b_{j})
       a_{\ell m}\partial_{m}\psi\ \overline{v})
    -2(\Im V)\Im(a(\nabla \psi,\nabla^{b}v)v)
  \end{split}
  \end{equation}
  where
  \begin{equation*}%
    \alpha_{\ell m}=2a_{jm}\partial_{j}(a_{\ell k}
       \partial_{k}\psi)
            -a_{jk}\partial_{k}\psi \partial_{j}a_{\ell m},
  \end{equation*}
  and
  \begin{equation}\label{eq:id2}
    \partial_{j}P_{j}
    =
    A^{b}v \cdot \phi \overline{v}
    +a(\nabla^{b}v,\nabla^{b}v)\phi
    -\textstyle \frac12 A \phi|v|^{2}
    +i\Im a(\nabla^{b} v,v \nabla \phi).
  \end{equation}
\end{proposition}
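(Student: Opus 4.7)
The plan is to prove both identities by direct pointwise computation, applying Leibniz to $\partial_j Q_j$ and $\partial_j P_j$ and then rearranging. The core ingredients are the symmetry and reality of $a_{jk}$ and $b_j$; the conversion identity $\partial_j = \partial^b_j - ib_j$, which bridges the flat and magnetic divergences; the pointwise formula $\partial_j|v|^2 = 2\Re(\overline v\,\partial^b_j v)$ (the $\pm ib_j|v|^2$ contributions cancel); and the magnetic commutator $[\partial^b_j,\partial^b_k] = i(\partial_j b_k - \partial_k b_j) = i(db)_{jk}$.

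For \eqref{eq:id2} the computation is short. Expanding $\partial_j P_j$ by Leibniz, writing $\partial_j(a_{jk}\partial^b_k v) = A^b v - i b_j a_{jk}\partial^b_k v$, and using $\partial_j \overline v = \overline{\partial^b_j v} + i b_j \overline v$ inside $\partial_j(\phi \overline v)$, the magnetic corrections cancel pairwise. The only nontrivial rearrangement is
\[
a_{jk}(\partial^b_k v)\phi_j \overline v - \tfrac12 a_{jk}\phi_k \partial_j|v|^2 = i\,\Im\, a(\nabla^b v,\, v\nabla \phi),
\]
which follows from $a_{jk}=a_{kj}$ together with the formula for $\partial_j|v|^2$; this isolates the imaginary contribution in \eqref{eq:id2}.

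For \eqref{eq:id1} I split $Q_j$ into four pieces: the multiplier term $T^{(1)}_j := a_{jk}(\partial^b_k v)\,N$ with $N := (A\psi)\overline v + 2 a_{\ell m}(\partial_\ell \psi)(\partial^b_m v)$; the weight correction $T^{(2)}_j := -\tfrac12 a_{jk}(\partial_k A\psi)|v|^2$; the potential balance $T^{(3)}_j := -a_{jk}(\partial_k\psi) V|v|^2$; and the energy balance $T^{(4)}_j := -a_{jk}(\partial_k\psi)\, a(\nabla^b v,\nabla^b v)$. The scalar half $(A\psi)\overline v$ of $N$, paired with $T^{(2)}_j$, produces the bracket contribution $\Re(A^b v \cdot (A\psi)\overline v)$ together with $-\tfrac12(A^2\psi)|v|^2$, via the same symmetry trick used for \eqref{eq:id2}. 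The gradient half of $N$ generates upon differentiation a quadratic form $\Re(\beta_{\ell m}\,\partial^b_m v\,\overline{\partial^b_\ell v})$ with $\beta_{\ell m}=2 a_{jm}\partial_j(a_{\ell k}\partial_k \psi)$, plus an additional $\Re(A^b v\cdot 2 a(\nabla\psi,\nabla^b v))$ piece that completes the $\Re((A^b v - Vv)N)$ bracket. Adding $\partial_j T^{(4)}_j$ extracts from this quadratic form the correction $-a_{jk}\partial_k\psi\,\partial_j a_{\ell m}$, producing precisely the coefficient $\alpha_{\ell m}$ of \eqref{eq:id1}. The magnetic $db$ term emerges when symmetrizing the quadratic form in $(\ell,m)$: its antisymmetric part couples to $[\partial^b_\ell,\partial^b_m]v = i(db)_{\ell m} v$, yielding the stated $\Im$-contribution. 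Finally $\partial_j T^{(3)}_j$ contributes $-\Re a(\nabla\psi,\nabla V)|v|^2$ and absorbs the $Vv$ half of the bracket; the imaginary part of $V$ survives only against the gradient half of $N$, producing the last term $-2(\Im V)\Im(a(\nabla\psi,\nabla^b v) v)$.

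The hard part is the bookkeeping, not any single step. One has to simultaneously track the magnetic corrections coming from $\partial_j$ versus $\partial^b_j$ and from $\partial_j|v|^2 = 2\Re(\overline v\,\partial^b_j v)$; carry through the paired-index manipulations hidden in $\beta_{\ell m}$; and identify which symmetric/antisymmetric pieces of the quadratic form survive. The correction inside $\alpha_{\ell m}$ is only visible once $\partial_j T^{(1)}_j$ and $\partial_j T^{(4)}_j$ are combined \emph{before} simplification; treating them separately obscures the cancellation that makes the Morawetz quadratic form appear.
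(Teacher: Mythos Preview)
Your proposal is correct and follows the same direct-computation approach the paper indicates; in fact the paper's own proof is essentially a one-line remark (``long but elementary \dots we omit the details''), so your outline is considerably more explicit than what appears there. The only minor stylistic difference is that the paper phrases the derivation as ``multiply $A^{b}v-Vv$ by the multiplier and rearrange'', whereas you expand $\partial_{j}Q_{j}$ and $\partial_{j}P_{j}$ directly; these are the two standard ways of verifying the same pointwise identity and lead to the same bookkeeping.
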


\begin{proof}
  As mentioned above, both identities can be deduced by 
  multiplying
  the quantity $A^{b}v-V(x)v$ by \eqref{eq:mormult} or 
  $\phi \overline{v}$ respectively. The computations are
  long but elementary, and 
  once the identities \eqref{eq:id1} and 
  \eqref{eq:id2} are known, it is straightforward
  to check their validity. We omit the details.
\end{proof}

Applying the two identities of the previous Proposition
to a solution of the Helmholtz equation
with $V=c(x)-\lambda-i \epsilon$, then adding them,
after a few elementary manipulations we obtain:

\begin{theorem}\label{the:morid}
  Assume $v\in H^{2}_{loc}(\Omega)$ satisfies
  \begin{equation}
    A^{b}v-c(x)v+(\lambda+i \epsilon)v=f(x)
  \end{equation}
  for $a,b,c$ real valued, $a(x)$ symmetric,
  $\lambda,\epsilon\in \mathbb{R}$. 
  Then the following identities hold for any
  real weights $\phi(x),\psi(x)$:
  \begin{equation}\label{eq:morid}
  \begin{split}
    \Re \partial_{j}\{Q_{j}+P_{j}\}=
    &-\textstyle \frac12 A(A \psi+\phi)|v|^{2}
    -[a(\nabla \psi,\nabla c)-c \phi+\lambda \phi]|v|^{2}
    \\
    &+ \Re[\alpha_{\ell m}\ 
      \partial^{b}_{m}v\ \overline{\partial^{b}_{\ell}v}]
    +a(\nabla^{b}v,\nabla^{b}v)\phi
    \\
    &+2 \epsilon\Im[a(\nabla \psi,\nabla^{b}v)v]
    +2\Im[a_{jk}\partial^{b}_{k}v
       (\partial_{j}b_{\ell}-\partial_{\ell}b_{j})
       a_{\ell m}\partial_{m}\psi\ \overline{v}]
    \\
    &+\Re [(A\psi+\phi)\overline{v}f+2a(\nabla\psi,\nabla^bv)f]
  \end{split}
  \end{equation}
  and
  \begin{equation}\label{eq:morid2}
    \partial_{j}P_{j}=
    a(\nabla^{b}v,\nabla^{b}v)\phi
    +(c-\lambda-i \epsilon)|v|^{2}\phi
    +f \overline{v}\phi
    -\textstyle \frac12 A \phi|v|^{2}
    +i\Im a(\nabla^{b} v,v \nabla \phi),
  \end{equation}
  where 
  \begin{equation*}
    P_{j}=
    a_{jk}\partial^{b}_{k}v\phi \overline{v}
       -\textstyle \frac12 a_{jk}\partial_{k}\phi|v|^{2}
  \end{equation*}
  \begin{equation*}
    Q_{j}=
    a_{jk}\partial^{b}_{k}v \cdot 
      [A^{b},\psi]\overline{v}
      -\textstyle\frac12 a_{jk}(\partial_{k}A \psi)|v|^{2}
      -a_{jk}\partial_{k}\psi 
      \left[(c-\lambda)|v|^{2}+a(\nabla^{b}v,\nabla^{b}v)\right]
  \end{equation*}
  and
  \begin{equation*}
    \alpha_{\ell m}=2a_{jm}\partial_{j}(a_{\ell k}
         \partial_{k}\psi)
          -a_{jk}\partial_{k}\psi \partial_{j}a_{\ell m}.
  \end{equation*}
\end{theorem}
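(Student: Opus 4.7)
The plan is to obtain both identities directly from Proposition \ref{pro:mor} by making the concrete substitution $V(x)=c(x)-\lambda-i\varepsilon$ and using the hypothesis $A^{b}v-Vv=f$ wherever the combination $A^{b}v-Vv$ or $A^{b}v$ appears; the rest is just bookkeeping of real parts.

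\smallskip
\textbf{Identity \eqref{eq:morid2}.} I would start with \eqref{eq:id2}, which is
\[
\partial_{j}P_{j}=A^{b}v\cdot\phi\overline{v}+a(\nabla^{b}v,\nabla^{b}v)\phi-\tfrac12 A\phi\,|v|^{2}+i\Im a(\nabla^{b}v,v\nabla\phi).
\]
Using the equation to replace $A^{b}v=(c-\lambda-i\varepsilon)v+f$ in the first term turns it into $(c-\lambda-i\varepsilon)|v|^{2}\phi+f\overline{v}\phi$, which gives \eqref{eq:morid2} on the nose. Note that I do not take real parts here, so the $-i\varepsilon|v|^{2}\phi$ term and the imaginary correction $i\Im a(\nabla^{b}v,v\nabla\phi)$ both survive.

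\smallskip
\textbf{Identity \eqref{eq:morid}.} This is the sum $\Re\partial_{j}(Q_{j}+P_{j})$. For $\Re\partial_{j}Q_{j}$ I would plug $V=c-\lambda-i\varepsilon$ into \eqref{eq:id1} and simplify term by term: (i) $\Re(A^{b}v-Vv)[A^{b},\psi]\overline{v}=\Re f\,[A^{b},\psi]\overline{v}=\Re[(A\psi)\overline{v}f+2a(\nabla\psi,\nabla^{b}v)f]$ by the definition \eqref{eq:mormult}; (ii) $\nabla V=\nabla c$ since $\lambda,\varepsilon$ are real constants, so $-\Re a(\nabla\psi,\nabla V)|v|^{2}=-a(\nabla\psi,\nabla c)|v|^{2}$; (iii) $\Im V=-\varepsilon$, so the last term of \eqref{eq:id1} becomes $+2\varepsilon\Im[a(\nabla\psi,\nabla^{b}v)v]$; (iv) observe that the definition of $Q_{j}$ in the theorem statement differs from that in Proposition \ref{pro:mor} by absorbing $V$ into the bracket, which is precisely the $-a_{jk}\partial_{k}\psi[(c-\lambda)|v|^{2}+a(\nabla^{b}v,\nabla^{b}v)]$ piece — one should check that this relabelling is an exact divergence rewriting and does not introduce new terms. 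For $\Re\partial_{j}P_{j}$, I would take the real part of \eqref{eq:morid2} (just derived): the term $-i\varepsilon|v|^{2}\phi$ disappears, as does the imaginary correction $i\Im a(\nabla^{b}v,v\nabla\phi)$, leaving $(c-\lambda)|v|^{2}\phi+\Re(f\overline{v})\phi+a(\nabla^{b}v,\nabla^{b}v)\phi-\tfrac12 A\phi\,|v|^{2}$.

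\smallskip
\textbf{Assembly.} Adding the two expressions, I would collect like terms: the two Laplacian-type contributions combine to $-\tfrac12 A(A\psi+\phi)|v|^{2}$; the potential contributions combine to $-[a(\nabla\psi,\nabla c)-c\phi+\lambda\phi]|v|^{2}$; the two source terms combine to $\Re[(A\psi+\phi)\overline{v}f+2a(\nabla\psi,\nabla^{b}v)f]$; and the remaining $\alpha_{\ell m}$, gradient-energy $\phi$, $\varepsilon$-imaginary and $db$-imaginary terms survive unchanged. This matches \eqref{eq:morid} exactly.

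\smallskip
There is no serious obstacle here: the proof is a mechanical specialization of Proposition \ref{pro:mor} and a careful accounting of real and imaginary parts. The only points that require a bit of care are keeping track of the sign $\Im V=-\varepsilon$ (not $+\varepsilon$) so that the multiplier identity produces $+2\varepsilon\Im[\cdots]$, and verifying that the repackaging of $Q_{j}$ to include the $(c-\lambda)|v|^{2}$ factor is consistent with the divergence on the left-hand side.
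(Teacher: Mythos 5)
Your proposal is correct and matches the paper's own route, which is precisely to substitute $V=c-\lambda-i\varepsilon$ into Proposition \ref{pro:mor}, use $A^{b}v-Vv=f$, and add the two resulting identities. The one point you flag but do not close — that the theorem's $Q_{j}$ uses $(c-\lambda)$ where the proposition's $Q_{j}$ has $V$ — is harmless because the discrepancy $i\varepsilon\,a_{jk}\partial_{k}\psi\,|v|^{2}$ is purely imaginary (as $a,\psi,\varepsilon,|v|^{2}$ are all real), so it disappears under $\Re\partial_{j}$.
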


\section{Morrey-Campanato type norms and their properties}
\label{sec:norms}

In this section we prove some relations
between the Morrey-Campanato type norms $\dot X,\dot Y,X,Y$
and usual weighted $L^{2}$ norms.
If $\Omega$ is an open subset of $\mathbb{R}^{n}$, $n\ge2$,
we write
\begin{equation*}
    \Omega_{=R}=\Omega\cap\{x \colon |x|=R\},\quad
    \Omega_{\le R}=\Omega\cap\{x \colon |x|\le R\},\quad
    \Omega_{\ge R}=\Omega\cap\{x \colon |x|\ge R\},
\end{equation*}
\begin{equation*}
    \Omega_{R_{1}\le|x|\le R_{2}}
    =\Omega\cap\{x \colon R_{1}\le|x|\le R_{2}\}.
\end{equation*}
The homogeneous norm $\dot X$ and the corresponding
predual norm $\dot X^{*}$ of a function $v:\Omega\to \mathbb{C}$
are defined as
\begin{equation*}
  \textstyle
  \|v\|_{\dot X}^{2}:=
  \sup\limits_{R>0}\frac{1}{R^{2}}\int_{\Omega_{=R}}
  |v|^{2}dS,
  \qquad
  \|v\|_{\dot X^{*}}:=
  \int_{0}^{+\infty}
  r
  \left(\int_{\Omega_{=r}}|v|^{2}dS\right)^{\frac12}dr.
\end{equation*}
where $dS$ is the surface measure on $\Omega_{=R}$.
The corresponding nonhomogeneous versions, denoted
by $X,X^{*}$, are
\begin{equation}\label{eq:MC2}
  \textstyle
  \|v\|_{X}^{2}:=
  \sup\limits_{R>0}\frac{1}{\langle R\rangle^{2}}
  \int_{\Omega_{=R}}
  |v|^{2}dS,
  \qquad
  \|v\|_{X^{*}}:=
  \int_{0}^{+\infty}
  \bra{r}
  \left(\int_{\Omega_{=r}}|v|^{2}dS\right)^{\frac12}dr
\end{equation}
where $\langle R\rangle=\sqrt{1+R^{2}}$.
We shall also need proper Morrey-Campanato spaces, both in
the homogeneous version $\dot Y$ and in the non homogenous
version $Y$; their norms are defined as
\begin{equation}\label{eq:MC3}
  \textstyle
  \|v\|_{\dot Y}^{2}:=
  \sup\limits_{R>0}\frac{1}{R}\int_{\Omega_{\le R}}
  |v|^{2}dx,
  \qquad
  \|v\|_{Y}^{2}:=
  \sup\limits_{R>0}
  \frac{1}{\langle R\rangle}\int_{\Omega_{\le R}}
  |v|^{2}dx.
\end{equation}
The following equivalence is easy to prove:
\begin{equation}\label{eq:equivnonhom}
  \textstyle
  \|v\|_{Y}^{2}\le
  \sup_{R\ge1}\frac1R\int_{\Omega_{\le R}}|v|^{2}\le
  \sqrt{2}\|v\|_{Y}^{2}.
\end{equation}
A concrete characterization of the predual norms $\dot Y^{*}$,
$Y^{*}$ is less immediate. The simplest approach is
to introduce an equivalent dyadic norm
\begin{equation*}
\textstyle
  \|v\|_{ \dot Y_{\Delta} }:=
  \sup _{j\in \mathbb{Z}}
  2^{-j/2}\|v\|_{L^{2}(\Omega_{2^{j-1}\le|x|\le 2^{j}})}
\end{equation*}
which satisfies, as it is readily seen,
\begin{equation*}
  2^{-1}\|v\|_{ \dot Y_{\Delta} }
  \le
  \|v\|_{ \dot Y }
  \le
  2\|v\|_{ \dot Y_{\Delta} }
\end{equation*}
and similarly
\begin{equation*}
  \textstyle
  \|v\|_{ Y_{\Delta} }:=
  \|v\|_{L^{2}(\Omega_{\le1})}+
  \sup _{j\ge1}
  2^{-j/2}\|v\|_{L^{2}(\Omega_{2^{j-1}\le|x|\le 2^{j}})},
\end{equation*}
which satisfies
\begin{equation*}
  3^{-1}\|v\|_{ Y_{\Delta} }
  \le
  \|v\|_{ Y }
  \le
  3\|v\|_{ Y_{\Delta} }.
\end{equation*}
We then obtain the following characterizations:
\begin{equation*}
  \|v\|_{\dot Y^{*}}
  \simeq
  \sum_{j\in \mathbb{Z}}2^{j/2}
     \|v\|_{L^{2}(\Omega _{2^{j-1}\le|x|\le 2^{j}})},
  \quad
  \|v\|_{Y^{*}}
  \simeq
  \|v\|_{L^{2}(\Omega_{\le 1})}
  +
  \sum_{j\ge1}2^{j/2}
  \|v\|_{L^{2}(\Omega _{2^{j-1}\le|x|\le 2^{j}})}.
\end{equation*}
The following Lemmas contain several estimates to be used
in the rest of the paper.

\begin{lemma}\label{lem:normest1}
  For any $v\in C^{\infty}_{c}(\mathbb{R}^{n})$,
  \begin{equation}\label{eq:stnorma8}
    \||x|^{-1}v\|_{\dot Y}\le
    \|v\|_{\dot X},\qquad
    \|\bra{x}^{-1}v\|_{ Y}\le
    \|v\|_{ X}.
  \end{equation}
\end{lemma}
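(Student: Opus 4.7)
The plan is to pass from the solid integral defining the $\dot Y$ (or $Y$) norm to an iterated integral in polar/geodesic spherical coordinates via the co-area formula, then bound the inner spherical integral using the definition of the $\dot X$ (or $X$) norm.

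Concretely, for the homogeneous estimate I would fix $R>0$ and write
\begin{equation*}
  \int_{\Omega_{\le R}}|x|^{-2}|v|^{2}\,dx
  =\int_{0}^{R}\frac{1}{r^{2}}\left(\int_{\Omega_{=r}}|v|^{2}\,dS\right)dr.
\end{equation*}
By the definition of $\|v\|_{\dot X}$, the integrand inside $dr$ is at most $\|v\|_{\dot X}^{2}$ for every $r>0$, hence the right-hand side is bounded by $R\|v\|_{\dot X}^{2}$. Dividing by $R$ and taking the supremum over $R>0$ gives $\||x|^{-1}v\|_{\dot Y}^{2}\le\|v\|_{\dot X}^{2}$, which is the first inequality.

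For the nonhomogeneous version the argument is identical with $|x|^{-2}$ replaced by $\bra{x}^{-2}$: one obtains
\begin{equation*}
  \int_{\Omega_{\le R}}\bra{x}^{-2}|v|^{2}\,dx
  =\int_{0}^{R}\frac{1}{\bra{r}^{2}}\left(\int_{\Omega_{=r}}|v|^{2}\,dS\right)dr
  \le R\,\|v\|_{X}^{2},
\end{equation*}
and then dividing by $\bra{R}$ and using $R/\bra{R}\le 1$ yields $\|\bra{x}^{-1}v\|_{Y}^{2}\le\|v\|_{X}^{2}$ after taking the supremum in $R$.

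There is no real obstacle: the only subtlety is the co-area formula, which is unproblematic since the integrand factors nicely in $|x|$ and the spherical slices $\Omega_{=r}$ are precisely those appearing in the definitions of $\|\cdot\|_{\dot X}$ and $\|\cdot\|_{X}$. The density of $C^{\infty}_{c}(\mathbb{R}^{n})$ functions in the relevant spaces is implicit so no approximation argument is needed.
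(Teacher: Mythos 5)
Your proof is correct and follows the same polar-coordinate (co-area) approach as the paper, which also bounds the inner spherical integral by the $\dot X$ (resp.\ $X$) norm and then estimates $\sup_{R>0}\frac1R\int_0^R d\rho$ (resp.\ uses $R/\bra{R}\le 1$). No discrepancy to report.
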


\begin{proof}
  Both inequalities are immediate in polar coordinates:
  for the first we have
  \begin{equation*}
    \textstyle
    \sup_{R>0}\frac1R\int_{\Omega_{\le R}}\frac{|v|^{2}}{|x|^{2}}
    =
    \sup_{R>0}\frac1R
    \int_{0}^{R}\frac{1}{\rho^{2}}
    \int_{\Omega_{=\rho}}|v|^{2}dSd\rho
    \le
    \|v\|_{\dot X}^{2}
    \sup_{R>0}\frac1R\int_{0}^{R}d\rho
\end{equation*}
  and the second one is similar.
\end{proof}

\begin{lemma}\label{lem:normest2}
  For any $0<\delta<1$ and $v\in C^{\infty}_{c}(\mathbb{R}^{n})$,
  \begin{equation}\label{eq:stnorma1}
    \textstyle
    \int_{\Omega}
    \frac{|v|^{2}}{|x|^{2}\langle x\rangle^{1+\delta}}
    \le  2  \delta^{-1}\|v\|_{\dot X}^{2},
  \end{equation}
  \begin{equation}\label{eq:stnorma3}
    \textstyle
    \int_{\Omega_{\ge1}}
    \frac{|v|^{2}}{|x|^{3}\langle x\rangle^{\delta}}
    \le
    \int_{\Omega_{\ge1}}
    \frac{|v|^{2}}{|x|^{3+\delta}}
    \le  
    2 \delta^{-1}\|v\|_{X}^{2},
  \end{equation}
  \begin{equation}\label{eq:stnorma4}
    \textstyle
    \int_{\Omega}\frac{|v|^{2}}{\langle x\rangle^{1+\delta}}
    \le
    8 \delta^{-1}
    \|v\|_{Y}^{2}
    \le
    8 \delta^{-1}
    \|v\|_{\dot Y}^{2}.
  \end{equation}
\end{lemma}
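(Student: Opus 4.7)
My plan is to treat the three inequalities in turn, using polar coordinates for the first two and a dyadic decomposition for the third.

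For \eqref{eq:stnorma1}, I would rewrite the integral in polar coordinates,
\begin{equation*}
\textstyle
\int_{\Omega}\frac{|v|^{2}}{|x|^{2}\langle x\rangle^{1+\delta}}dx
=\int_{0}^{\infty}\frac{1}{\rho^{2}\langle\rho\rangle^{1+\delta}}
 \int_{\Omega_{=\rho}}|v|^{2}dS\,d\rho,
\end{equation*}
and use the definition $\int_{\Omega_{=\rho}}|v|^{2}dS\le\rho^{2}\|v\|_{\dot X}^{2}$ to reduce to the 1D integral $\int_{0}^{\infty}\langle\rho\rangle^{-1-\delta}d\rho$. Splitting at $\rho=1$ gives $\int_{0}^{1}1\,d\rho+\int_{1}^{\infty}\rho^{-1-\delta}d\rho\le1+1/\delta\le2/\delta$ since $\delta<1$, which yields the stated constant.

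For \eqref{eq:stnorma3}, the same polar coordinate reduction gives $\int_{1}^{\infty}\rho^{-3-\delta}\int_{\Omega_{=\rho}}|v|^{2}dS\,d\rho\le\|v\|_{X}^{2}\int_{1}^{\infty}\langle\rho\rangle^{2}\rho^{-3-\delta}d\rho$, and the elementary bound $\langle\rho\rangle^{2}\le2\rho^{2}$ for $\rho\ge1$ gives $2\int_{1}^{\infty}\rho^{-1-\delta}d\rho=2/\delta$. The inner inequality $|x|^{-3-\delta}\ge|x|^{-3}\langle x\rangle^{-\delta}$ on $\Omega_{\ge1}$ is trivial since $\langle x\rangle\ge|x|$.

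For \eqref{eq:stnorma4}, I would dyadically decompose $\Omega=\Omega_{\le1}\cup\bigcup_{j\ge0}\Omega_{2^{j}\le|x|\le2^{j+1}}$. On $\Omega_{\le1}$ I bound $\langle x\rangle^{-1-\delta}\le1$ and use $\int_{\Omega_{\le1}}|v|^{2}\le\langle1\rangle\|v\|_{Y}^{2}\le2\|v\|_{Y}^{2}$. On the $j$-th shell, $\langle x\rangle^{-1-\delta}\le2^{-j(1+\delta)}$ while $\int_{\Omega_{\le2^{j+1}}}|v|^{2}\le\langle2^{j+1}\rangle\|v\|_{Y}^{2}\le2\cdot2^{j+1}\|v\|_{Y}^{2}$, producing a bound $4\cdot2^{-j\delta}\|v\|_{Y}^{2}$ per shell. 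Summing the geometric series $\sum_{j\ge0}2^{-j\delta}=(1-2^{-\delta})^{-1}\le2/(\delta\log2)$ and adjusting constants gives the factor $8/\delta$. The last inequality $\|v\|_{Y}\le\|v\|_{\dot Y}$ is immediate from $R\le\langle R\rangle$.

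The whole lemma is routine; the only point requiring mild care is pinning down the explicit constants in the dyadic series for \eqref{eq:stnorma4}, where the precise form of the geometric sum, together with the mismatch between the defining radius $R$ and the value $\langle R\rangle$, needs to be handled so that the final constant $8/\delta$ is valid uniformly in $\delta\in(0,1)$.
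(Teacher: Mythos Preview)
Your approach is essentially identical to the paper's: polar coordinates for \eqref{eq:stnorma1} and \eqref{eq:stnorma3}, and a dyadic decomposition over $\Omega_{\le 1}\cup\bigcup_{j\ge0}\Omega_{2^{j}\le|x|\le2^{j+1}}$ for \eqref{eq:stnorma4}. The only point to flag is that your numerical bookkeeping in \eqref{eq:stnorma4} as written does not quite land on $8/\delta$: with $\langle 2^{j+1}\rangle\le 2\cdot 2^{j+1}$ and $\sum_{j\ge0}2^{-j\delta}\le 2/(\delta\log 2)$ you get roughly $13/\delta$, and even with the sharper $\sum\le 2/\delta$ you get $10/\delta$. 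The fix is the one the paper uses: replace $\langle 2^{j+1}\rangle\le 2\cdot 2^{j+1}$ by the sharper $\langle R\rangle\le\sqrt{2}\,R$ for $R\ge1$ (and $\int_{\Omega_{\le1}}|v|^{2}\le\sqrt{2}\,\|v\|_{Y}^{2}$), which yields $\sqrt{2}+4\sqrt{2}/\delta\le 5\sqrt{2}/\delta<8/\delta$.
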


\begin{proof}
  To prove \eqref{eq:stnorma1}, \eqref{eq:stnorma3} we write

  \begin{equation*}
    \textstyle
    \int_{\Omega}
    \frac{|v|^{2}}{|x|^{2}\langle x\rangle^{1+\delta}}
    \le
    \|v\|_{\dot X}^{2}
    \int_{0}^{\infty}\bra{\rho}^{-1-\delta}d\rho,
    \qquad
    \int_{\Omega_{\ge1}}
    \frac{|v|^{2}}{|x|^{3+\delta}}
    \le
    \|v\|_{X}^{2}
    \int_{1}^{\infty}
    \frac{\bra{\rho}^{2}}{\rho^{3+\delta}}d \rho
  \end{equation*}
  and notice that $\bra{\rho}\ge(1+\rho)/\sqrt{2}$,
  and that $\bra{\rho}^{2}\le 2 \rho^{2}$ for $\rho\ge1$.
  To prove \eqref{eq:stnorma4} we split
  $\Omega=\Omega_{\le1}\cup \Omega_{\ge1}$;
  we have immediately for the first piece
  \begin{equation*}
    \textstyle
    \int_{\Omega_{\le1}}
    \frac{|v|^{2}}{\langle x\rangle^{1+\delta}}
    \le
    \int_{\Omega_{\le1}}|v|^{2}
    \le
    \sqrt{2}\|v\|_{Y}^{2}.
  \end{equation*}
  For the remaining piece we use a dyadic decomposition
  $\Omega_{\ge1}=\cup_{j\ge0}\Omega_{2^{j}\le|x|\le2^{j+1}}$,
  we notice that
  \begin{equation*}
    \textstyle
    \int_{\Omega_{2^{j}\le|x|\le 2^{j+1}}}
    \frac{|v|^{2}}{\langle x\rangle^{1+\delta}}
    \le
    \frac{\bra{2^{j+1}}}{\bra{2^{j}}^{1+\delta}}
    \frac{1}{\bra{2^{j+1}}}
    \int_{\Omega_{\le2^{j+1}}}|v|^{2}
    \le
    \|v\|_{Y}^{2}\frac{\bra{2^{j+1}}}{\bra{2^{j}}^{1+\delta}}
  \end{equation*}
  and we sum over $j\ge0$. Since
  \begin{equation*}
    \textstyle
    \sum_{j\ge0}
    \frac{\bra{2^{j+1}}}{\bra{2^{j}}^{1+\delta}}
    \le
    2^{3/2}\sum_{j\ge0}2^{-j \delta}
    \le
    \frac{2^{3/2+\delta}}{2^{\delta}-1}
    \le
    2^{5/2}\delta^{-1}
  \end{equation*}
  we obtain \eqref{eq:stnorma4} with a constant
  $\sqrt{2}+ 2^{5/2}\le8$.
\end{proof}

\begin{lemma}\label{lem:normest3}
  For any $v\in C^{\infty}_{c}(\mathbb{R}^{n})$,
  \begin{equation}\label{eq:stnorma6}
    \textstyle
    \sup_{R>1}\frac{1}{R^{2}}
    \int_{\Omega_{R\le|x|\le 2R}}
    \frac{|v|^{2}}{|x|}dx\le
    3\|v\|_{X}^{2},
  \end{equation}
  \begin{equation}\label{eq:stnorma9}
    \textstyle
    \sup_{R>0}\frac{1}{R^{2}}\int_{\Omega_{R\le|x|\le2R}}
    \frac{|v|^{2}}{|x|}
    \le
    \frac{3}{2}\|v\|_{\dot X}^{2},
  \end{equation}
  \begin{equation}\label{eq:stnorma2}
    \textstyle
    \sup\limits_{R>0}
    \int_{\Omega_{\ge R}}
    \frac{R^{n-1}}{|x|^{n+2}}|v|^{2}dx
    \le
    \frac{1}{n-1} \|v\|_{\dot X}^{2},
    \qquad
    \sup\limits_{R>1}
    \int_{\Omega_{\ge R}}
    \frac{R^{n-1}}{|x|^{n+2}}|v|^{2}dx
    \le
    \frac{2}{n-1} \|v\|_{X}^{2}
  \end{equation}
\end{lemma}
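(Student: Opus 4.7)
All three estimates follow from passing to polar coordinates and using the defining inequalities for the $X$ and $\dot X$ norms, namely $\int_{\Omega_{=\rho}}|v|^{2}\,dS\le\rho^{2}\|v\|_{\dot X}^{2}$ and $\int_{\Omega_{=\rho}}|v|^{2}\,dS\le\langle\rho\rangle^{2}\|v\|_{X}^{2}$, exactly in the spirit of the previous lemma. There is no serious obstacle; the only care needed is to handle the difference between $\rho$ and $\langle\rho\rangle$ in the nonhomogeneous estimates, which is harmless under the restriction $R>1$ since then $\langle\rho\rangle^{2}\le 2\rho^{2}$ on the range of integration.

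For the first inequality \eqref{eq:stnorma6}, I would write, in polar coordinates,
\begin{equation*}
\int_{\Omega_{R\le|x|\le 2R}}\frac{|v|^{2}}{|x|}\,dx
=\int_{R}^{2R}\frac{1}{\rho}\int_{\Omega_{=\rho}}|v|^{2}\,dS\,d\rho
\le\|v\|_{X}^{2}\int_{R}^{2R}\frac{\langle\rho\rangle^{2}}{\rho}\,d\rho.
\end{equation*}
Since $R>1$ and $\rho\ge R$, we have $\langle\rho\rangle^{2}\le 2\rho^{2}$, so the integral is bounded by $2\int_{R}^{2R}\rho\,d\rho=3R^{2}$, and dividing by $R^{2}$ gives the stated constant $3$. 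For \eqref{eq:stnorma9} the same polar decomposition and the estimate $\int_{\Omega_{=\rho}}|v|^{2}\,dS\le\rho^{2}\|v\|_{\dot X}^{2}$ reduce the problem to $\int_{R}^{2R}\rho\,d\rho=\tfrac{3}{2}R^{2}$.

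For \eqref{eq:stnorma2}, I would write
\begin{equation*}
\int_{\Omega_{\ge R}}\frac{R^{n-1}}{|x|^{n+2}}|v|^{2}\,dx
=R^{n-1}\int_{R}^{\infty}\rho^{-n-2}\int_{\Omega_{=\rho}}|v|^{2}\,dS\,d\rho.
\end{equation*}
Using $\int_{\Omega_{=\rho}}|v|^{2}\,dS\le\rho^{2}\|v\|_{\dot X}^{2}$ in the homogeneous case gives
\begin{equation*}
R^{n-1}\|v\|_{\dot X}^{2}\int_{R}^{\infty}\rho^{-n}\,d\rho=\frac{1}{n-1}\|v\|_{\dot X}^{2},
\end{equation*}
which yields the first half of \eqref{eq:stnorma2}. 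For the nonhomogeneous half, under the restriction $R>1$ one uses $\langle\rho\rangle^{2}\le 2\rho^{2}$ to replace $\|v\|_{\dot X}^{2}$ by $2\|v\|_{X}^{2}$, producing the constant $2/(n-1)$. This completes the proof.
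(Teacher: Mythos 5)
Your proof is correct, and all the constants check out. The paper simply labels this lemma ``Trivial'' and gives no argument; the polar-coordinate computation you supply is exactly the one-line estimate the authors are implicitly relying on, so there is nothing to compare beyond noting that you have filled in the omitted routine details accurately.
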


\begin{proof}
  Trivial.
\end{proof}

\begin{lemma}\label{lem:normest4}
  For any $R>0$, $0<\delta<1$ and 
  $v,w\in C^{\infty}_{c}(\mathbb{R}^{n})$,
  \begin{equation}\label{eq:stnorma5}
    \textstyle
    \int_{\Omega_{R\le|x|\le2R}}
    |vw|dx\le
    3R^{2}\|v\|_{\dot X}\|w\|_{\dot Y},\qquad
    \int_{\Omega_{\le R}}
    |vw|dx\le
    R\|v\|_{\dot X}\|w\|_{\dot Y^{*}},
  \end{equation}
  \begin{equation}\label{eq:stnorma5b}
    \textstyle
    \int_{\Omega_{R\le|x|\le2R}}
    |vw|dx\le
    3 \langle R\rangle ^{2}\|v\|_{ X}\|w\|_{ Y},\qquad
    \int_{\Omega_{\le R}}
    |vw|dx\le
    \langle R\rangle\|v\|_{ X}\|w\|_{ Y^{*}},
  \end{equation}
  \begin{equation}\label{eq:stnorma10}
    \textstyle
    \int_{\Omega_{\le1}}\frac{|vw|}{|x|^{2-\delta}}
    +
    \int_{\Omega_{\ge1}}\frac{|vw|}{|x|^{2+\delta}}
    \le
    9\delta^{-1}
    \|v\|_{\dot X}\|w\|_{\dot Y},
  \end{equation}
  \begin{equation}\label{eq:stnorma10b}
    \textstyle
    \int_{\Omega_{\ge1}}\frac{|vw|}{|x|^{2+\delta}}
    \le
    12 \delta^{-1}
    \|v\|_{ X}\|w\|_{ Y}.
  \end{equation}
\end{lemma}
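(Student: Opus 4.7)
All four estimates follow the same pattern: combine the sphere-wise bound $\|v\|_{L^{2}(\Omega_{=\rho})}\le\rho\|v\|_{\dot X}$ (respectively $\le\langle\rho\rangle\|v\|_{X}$), which is immediate from the definition of the $\dot X$ (respectively $X$) norm, with Cauchy--Schwarz and either the defining shell bound for $\|w\|_{\dot Y}$, $\|w\|_{Y}$ or the dyadic characterizations of $\dot Y^{*}$, $Y^{*}$ already established in this section. No special property of $\Omega$ beyond the polar parametrization is used.

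For the first inequality in \eqref{eq:stnorma5} I would pass to polar coordinates and apply Cauchy--Schwarz on each sphere to split $|vw|$. Bounding $\|v\|_{L^{2}(\Omega_{=\rho})}\le\rho\|v\|_{\dot X}$, what remains is $\int_{R}^{2R}\rho\|w\|_{L^{2}(\Omega_{=\rho})}d\rho$; a second Cauchy--Schwarz in $\rho\in(R,2R)$ isolates the factor $\|w\|_{L^{2}(\Omega_{R\le|x|\le 2R})}\le\sqrt{2R}\|w\|_{\dot Y}$ and produces the $R^{2}$ power. For the second inequality I would split $\Omega_{\le R}$ into dyadic shells $A_{k}=\Omega_{2^{k-1}\le|x|\le 2^{k}}$ with $k\le J$ and $R\simeq 2^{J}$, apply Cauchy--Schwarz annulus by annulus, control $\|v\|_{L^{2}(A_{k})}$ by $\|v\|_{\dot X}$ times a constant multiple of $2^{3k/2}$ (by integrating the sphere bound over a dyadic radial interval), and extract the factor $2^{k}\le 2R$ to reduce to the dyadic characterization
\[
\|w\|_{\dot Y^{*}}\simeq\sum_{k\in\mathbb{Z}}2^{k/2}\|w\|_{L^{2}(A_{k})}
\]
recalled above. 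The two inequalities \eqref{eq:stnorma5b} follow by literally the same arguments with $\rho$ replaced by $\langle\rho\rangle$ and the dyadic characterization of $Y^{*}$ in place of that of $\dot Y^{*}$.

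Estimates \eqref{eq:stnorma10} and \eqref{eq:stnorma10b} reduce to \eqref{eq:stnorma5} and \eqref{eq:stnorma5b} by a further dyadic splitting of the integration region: decompose $\Omega_{\ge 1}=\bigcup_{j\ge 0}\Omega_{2^{j}\le|x|\le 2^{j+1}}$ and $\Omega_{\le 1}=\bigcup_{j\le -1}\Omega_{2^{j}\le|x|\le 2^{j+1}}$, replace $|x|^{-(2\pm\delta)}$ by its maximum on each annulus, and apply the first inequality of \eqref{eq:stnorma5} or \eqref{eq:stnorma5b} on each piece. The $R^{2}$ factor produced there exactly cancels the $|x|^{-2}$ part of the weight, so what remains is a geometric series with ratio $2^{\pm\delta}$ summing to $1/(1-2^{-\delta})\le 3/\delta$ (the latter bound coming from $2^{\delta}-1\ge\delta\ln 2\ge\delta/2$ for $\delta\in(0,1)$). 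The only nontrivial point is the numerical bookkeeping required to hit the constants $3$, $9\delta^{-1}$, $12\delta^{-1}$ stated in the lemma; every conceptual step is a one-line application of Cauchy--Schwarz or of the dyadic norm characterizations already proved.
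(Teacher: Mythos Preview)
Your approach matches the paper's for the first inequalities in \eqref{eq:stnorma5}--\eqref{eq:stnorma5b} and for \eqref{eq:stnorma10}--\eqref{eq:stnorma10b}: Cauchy--Schwarz on annuli, then a dyadic sum against a geometric series in $2^{\pm\delta}$. The one point where you diverge is the \emph{second} inequality in \eqref{eq:stnorma5} (and its nonhomogeneous twin in \eqref{eq:stnorma5b}). The paper does not decompose dyadically there; it uses $\dot Y$--$\dot Y^{*}$ duality directly, combined with the elementary bound $\|\one{\Omega_{\le R}}v\|_{\dot Y}\le R\|v\|_{\dot X}$, which follows at once from integrating the sphere estimate $\int_{\Omega_{=\rho}}|v|^{2}\le\rho^{2}\|v\|_{\dot X}^{2}$ over $\rho\in(0,R')$. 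This produces the stated constant $R$ (respectively $\langle R\rangle$) on the nose. Your dyadic route is correct in principle but inherits the equivalence constants from $\|w\|_{\dot Y^{*}}\simeq\sum_{k}2^{k/2}\|w\|_{L^{2}(A_{k})}$ together with the slack in $2^{k}\le 2R$, so it yields $CR$ with some absolute $C>1$; you flagged the constants $3$, $9\delta^{-1}$, $12\delta^{-1}$ as needing bookkeeping, but it is precisely the bare $R$ and $\langle R\rangle$ in \eqref{eq:stnorma5}--\eqref{eq:stnorma5b} where your method actually loses a factor. The duality argument is both shorter and sharp.
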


\begin{proof}
  The first inequalities in \eqref{eq:stnorma5} and
  \eqref{eq:stnorma5b} are trivial;
  the second ones follow by duality and by the estimates
  $\|\one{\Omega_{\le R}}v\|_{\dot Y}\le R\|v\|_{\dot X}$ and
  $\|\one{\Omega_{\le R}}v\|_{ Y}\le \bra{R}\|v\|_{ X}$,
  where $\one{K}$ denotes the characteristic function of a
  set $K$. To prove \eqref{eq:stnorma10}, 
  we use \eqref{eq:stnorma5} to write
  \begin{equation*}
    \textstyle
    \int_{\Omega_{\le1}}\frac{|vw|}{|x|^{2-\delta}}
    \le
    \sum_{j\ge1}
    2^{j(2-\delta)}
    \int_{\Omega_{2^{-j}\le|x|\le 2^{-j+1}}}
    |vw|
    \le
    \|v\|_{\dot X}\|w\|_{\dot Y}
    \sum_{j\ge1}
    2^{j(2-\delta)} \cdot 3 \cdot 2^{-2j}
  \end{equation*}
  which gives
  \begin{equation*}
    \textstyle
    \int_{\Omega_{\le1}}\frac{|vw|}{|x|^{2-\delta}}
    \le
    \frac{3}{2^{\delta}-1}
    \|v\|_{\dot X}\|w\|_{\dot Y}
  \end{equation*}
  and similarly
  \begin{equation*}
    \textstyle
    \int_{\Omega_{\ge1}}\frac{|vw|}{|x|^{2+\delta}}
    \le
    \sum_{j\ge0}
    2^{-j(2+\delta)}
    \int_{\Omega_{2^{j}\le|x|\le 2^{j+1}}}
    |vw|
    \le
    \frac{3 \cdot 2^{\delta}}{2^{\delta}-1}
    \|v\|_{\dot X}\|w\|_{\dot Y}
  \end{equation*}
  so that, summing up,
  \begin{equation*}
    \textstyle
    \int_{\Omega_{\le1}}\frac{|vw|}{|x|^{2-\delta}}
    +
    \int_{\Omega_{\ge1}}\frac{|vw|}{|x|^{2+\delta}}
    \le
    3 \frac{2^{\delta}+1}{2^{\delta}-1}
    \|v\|_{\dot X}\|w\|_{\dot Y}
    \le
    \frac 9 \delta 
    \|v\|_{\dot X}\|w\|_{\dot Y}.
  \end{equation*}
  For the nonhomogeneous estimate \eqref{eq:stnorma10b}
  we write, using \eqref{eq:stnorma5b},
  \begin{equation*}
    \textstyle
    \int_{\Omega_{\ge1}}\frac{|vw|}{|x|^{2+\delta}}
    \le
    \sum_{j\ge0}
    2^{-j(2+\delta)}
    \int_{\Omega_{2^{j}\le|x|\le 2^{j+1}}}
    |vw|
    \le
    3 \sum_{j\ge0}
    2^{-j(2+\delta)}\bra{2^{j}}^{2}
    \cdot
    \|v\|_{X}\|w\|_{Y}
  \end{equation*}
  and notice that
  \begin{equation*}
    \textstyle
    \sum_{j\ge0} 2^{-j(2+\delta)}\bra{2^{j}}^{2}
    \le
    2\sum_{j\ge0}2^{-j \delta}
    =
    \frac{2^{\delta+1}}{2^{\delta}-1}\le \frac4 \delta.
  \end{equation*}
\end{proof}

In the following Lemma we prove some magnetic
Hardy type inequalities, which require $n\ge3$,
expressed in terms of the nonhomogeneous $X,Y$ norms:

\begin{lemma}\label{lem:normest5}
  Let $n\ge3$ and
  assume $b(x)=(b_{1}(x),\dots,b_{n}(x))$ is continuous
  up to the boundary of $\Omega$ with values in $\mathbb{R}^{n}$.
  For any $0<\delta<1$ and
  $v\in C^{\infty}_{c}(\Omega)$, we have:
  \begin{equation}\label{eq:hardymag1}
    \textstyle
    \||x|^{-1}v\|_{L^{2}(\Omega)}\le
    \frac{2}{n-2}\|\nabla^{b}v\|_{L^{2}(\Omega)},
  \end{equation}
  \begin{equation}\label{eq:stnorma12}
    \||x|^{-1}v\|^{2}_{Y}\le
    6\|\nabla^{b}v\|^{2}_{Y}+ 3\|v\|^{2}_{X},
  \end{equation}
  \begin{equation}\label{eq:stnorma13}
    \textstyle
    \int_{\Omega_{\le1}} \frac{|\nabla^{b}v| |v|}{|x|}dx+
    \int_{\Omega_{\ge1}}\frac{|\nabla^{b}v| |v|}{|x|^{2+\delta}}dx
    \le
    9 \delta^{-1}
    (\|\nabla^{b}v\|^{2}_{Y}+\|v\|^{2}_{X}),
  \end{equation}
  \begin{equation}\label{eq:stnorma7}
    \textstyle
    \|v\|_{X}\le
    4\sup_{R>1}
    R^{-2} \int_{\Omega_{=R}}|v|^{2}dS
    +
    13\|\nabla^{b} v\|^{2}_{Y},
  \end{equation}
  \begin{equation}\label{eq:stnorma11}
    \textstyle
    \int_{\Omega}
    \frac{|v|^{2}}{|x|\bra{x}^{1+\delta}(1\vee |x|)}dx
    \le
    8\delta^{-1}\|v\|_{X}^{2}+
    9\int_{\Omega_{\le1}}|\nabla^{b}v|^{2}dx.
  \end{equation}
\end{lemma}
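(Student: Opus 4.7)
The plan for Lemma \ref{lem:normest5}. All five inequalities rest on two ingredients: the diamagnetic identity $\partial_j|v|^2 = 2\Re(\overline{v}\,\partial_j^b v)$ (which makes the magnetic field $b$ invisible whenever a real derivative of $|v|^2$ is integrated against a real vector field) and the divergence formulas $\nabla\cdot(x/|x|^2) = (n-2)/|x|^2$, $\nabla\cdot(x/|x|) = (n-1)/|x|$. The magnetic Hardy inequality \eqref{eq:hardymag1} will follow by expanding the nonnegative integral
\[
0\le \int_\Omega \Bigl|\nabla^b v + \tfrac{n-2}{2}\tfrac{x}{|x|^2} v\Bigr|^2 dx;
\]
one integration by parts on the cross term (with boundary contribution vanishing on $\partial \Omega$ by Dirichlet and at infinity by compact support) yields the stated inequality with the sharp constant $2/(n-2)$.

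For the nonhomogeneous Hardy bound \eqref{eq:stnorma12} I would integrate the identity $\nabla\cdot(x/|x|^2)|v|^2 = (n-2)|v|^2/|x|^2$ on $\Omega_{\le R}$, producing
\[
(n-2)\int_{\Omega_{\le R}}|v|^2/|x|^2\,dx \le 2\int_{\Omega_{\le R}}|\nabla^b v|\,|v|/|x|\,dx + R^{-1}\int_{\Omega_{=R}}|v|^2 dS,
\]
and absorb the cross term by Cauchy--Schwarz with balanced $\epsilon$. After dividing by $\langle R\rangle$ the supremum splits naturally: for $R\ge 1$ one has $R\langle R\rangle\ge \langle R\rangle^2/\sqrt 2$, converting the boundary piece into $\|v\|_X^2$; for $R\le 1$, one simply notes $\langle R\rangle\ge 1$ and applies the $R=1$ version of the identity, bounding everything by $\|v\|_X^2$ and $\|\nabla^b v\|_Y^2$. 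The same blueprint proves \eqref{eq:stnorma11}: on $\Omega_{\ge 1}$ the weight is dominated by $|x|^{-3-\delta}$, hence controlled by \eqref{eq:stnorma3}; on $\Omega_{\le 1}$ the weight is bounded by $|x|^{-1}$, and integrating $\nabla\cdot(x/|x|)=(n-1)/|x|$ against $|v|^2$ gives, after Cauchy--Schwarz, a bound by $\int_{\Omega_{\le 1}}|\nabla^b v|^2 + \int_{\Omega_{=1}}|v|^2 dS$, where the surface term is $\le \langle 1\rangle^2\|v\|_X^2$.

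Estimate \eqref{eq:stnorma13} is pure Cauchy--Schwarz on top of the previous results: on $\Omega_{\le 1}$ one writes $|\nabla^b v||v|/|x|\le \tfrac12|\nabla^b v|^2 + \tfrac12 |v|^2/|x|^2$ and invokes \eqref{eq:stnorma12}; on $\Omega_{\ge 1}$ one splits the weight $|x|^{-2-\delta}$ so as to match $|\nabla^b v|^2/\langle x\rangle^{1+\delta}$ (controlled by \eqref{eq:stnorma4}) against $|v|^2/|x|^{3+\delta}$ (controlled by \eqref{eq:stnorma3}), and the $\delta^{-1}$ factor issues directly from these lemmas. Finally, for \eqref{eq:stnorma7} --- where the left hand side should read $\|v\|_X^2$ --- the contribution of $R\ge 1$ to $\sup_R \langle R\rangle^{-2}\int_{\Omega_{=R}}|v|^2 dS$ is immediately $\le \sup_{R>1} R^{-2}\int_{\Omega_{=R}}|v|^2 dS$. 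For $R\le 1$, I would use the FTC identity relating the spherical mean of $|v|^2$ at radius $R$ to that at some $R_0\in[1,2]$, namely an integral of $\tfrac{n-1}{r}\int_{\Omega_{=r}}|v|^2 dS + 2\int_{\Omega_{=r}}\Re(\overline{v}\,\partial_r^b v)dS$; drop the positive $(n-1)/r$ term, average over $R_0\in[1,2]$ to replace the boundary piece by $\sup_{R>1}R^{-2}\int_{\Omega_{=R}}|v|^2$, and handle the cross term $\int_{\Omega_{\le 2}}|v||\nabla^b v|$ by a Cauchy--Schwarz that feeds \eqref{eq:stnorma12} into the $|v|^2/|x|^2$ factor, with the splitting parameter chosen small enough to absorb the generated $\|v\|_X^2$ contribution back into the left side.

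The main obstacle is precisely this absorption step in \eqref{eq:stnorma7}: since \eqref{eq:stnorma12} produces a term $\lesssim \|v\|_X^2$ on the right, the parameters in the Cauchy--Schwarz and the dyadic averages must be tuned so that its coefficient stays strictly below one after all contributions are collected, yielding the explicit constants $4$ and $13$ stated in the lemma.
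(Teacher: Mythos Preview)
Your plan is correct and for \eqref{eq:hardymag1}, \eqref{eq:stnorma12}, \eqref{eq:stnorma13}, and \eqref{eq:stnorma11} it coincides with the paper's argument up to cosmetic differences (the paper writes the Hardy identity as the divergence of $x|v|^2/|x|^2$ rather than expanding a square, and for \eqref{eq:stnorma11} it bounds $\int_{\Omega_{\le1}}|v|^2/|x|$ by integrating the pointwise estimate on $\frac1R\int_{\Omega_{=R}}|v|^2dS$ over $R\in(0,1)$ rather than a fresh $x/|x|$ identity, but these are equivalent).

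The one genuine difference is \eqref{eq:stnorma7}. Your route via the FTC between radii $R<1$ and $R_0\in[1,2]$, averaging, and then absorbing the induced $\|v\|_X^2$ term works, but the paper avoids the absorption entirely. The trick is to read the basic identity
\[
(n-2)\int_{\Omega_{\le R}}\frac{|v|^2}{|x|^2}\,dx=\frac1R\int_{\Omega_{=R}}|v|^2\,dS-2\int_{\Omega_{\le R}}\frac{\Re(\overline v\,\widehat x\cdot\nabla^b v)}{|x|}\,dx
\]
\emph{as an expression for} $\frac1R\int_{\Omega_{=R}}|v|^2dS$, bound the right side crudely by $(n-1)\int_{\Omega_{\le1}}|v|^2/|x|^2+\int_{\Omega_{\le1}}|\nabla^b v|^2$ for $R<1$, and then insert the same identity at $R=1$ (after the Cauchy--Schwarz absorption that gave you \eqref{eq:stnorma12}) to control $\int_{\Omega_{\le1}}|v|^2/|x|^2$ by $\frac{2}{n-2}\int_{\Omega_{=1}}|v|^2dS+\frac{4}{(n-2)^2}\int_{\Omega_{\le1}}|\nabla^b v|^2$. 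The output is a bound on $\frac1R\int_{\Omega_{=R}}|v|^2dS$ directly in terms of the \emph{single} sphere integral $\int_{\Omega_{=1}}|v|^2dS$ and $\int_{\Omega_{\le1}}|\nabla^b v|^2$, with no $\|v\|_X^2$ appearing on the right at all; the constants $4$ and $13$ then drop out without any tuning. Your approach buys you nothing extra here and costs you the delicate bookkeeping you flagged; the paper's reuse of the same identity in both directions is the cleaner move.
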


\begin{proof}
  We start from the identity
  \begin{equation*}
    \textstyle
    \nabla \cdot\left(\frac{x}{|x|^{2}}|v|^{2}\right)
    =
    \frac{n-2}{|x|^{2}}|v|^{2}+
    \frac{2}{|x|}\Re(\overline{v}\ \widehat{x}\cdot \nabla v)
    \equiv
    \frac{n-2}{|x|^{2}}|v|^{2}+
    \frac{2}{|x|}\Re(\overline{v}\ \widehat{x}\cdot \nabla^{b}v).
  \end{equation*}
  Integrating over $\Omega_{\le R}$ and noticing that
  $u\vert_{\partial \Omega}=0$, we get
  \begin{equation}\label{eq:stepA}
    \textstyle
    (n-2)\int_{\Omega_{\le R}}\frac{|v|^{2}}{|x|^{2}}dx
    =
    \frac1R\int_{\Omega_{=R}}|v|^{2}dS
    -\int_{\Omega_{\le R}}
    \frac{2}{|x|}\Re(\overline{v}\ \widehat{x}\cdot \nabla^{b}v)dx.
  \end{equation}
  Estimating the last term with Cauchy-Schwartz
  \begin{equation*}
    \textstyle
    -\int_{\Omega_{\le R}}
    \frac{2}{|x|}\Re(\overline{v}\ \widehat{x}\cdot \nabla^{b}v)d
    \le
    \frac{n-2}{2}
    \int_{\Omega_{\le R}} \frac{|v|^{2}}{|x|^{2}}dx
    +
    \frac{2}{n-2}
    \int_{\Omega_{\le R}}|\nabla^{b}v|^{2}dx
  \end{equation*}
  we obtain
  \begin{equation}\label{eq:stepB}
    \textstyle
    \int_{\Omega_{\le R}}\frac{|v|^{2}}{|x|^{2}}dx
    \le
    \frac{2}{n-2}
    \frac1R\int_{\Omega_{=R}}|v|^{2}dS
    +
    \frac{4}{(n-2)^{2}}
    \int_{\Omega_{\le R}}|\nabla^{b} v|^{2}dx.
  \end{equation}
  Letting $R\to \infty$ in \eqref{eq:stepB} we obtain
  \eqref{eq:hardymag1}. 
  On the other hand, for $0<R<1$ this gives
  \begin{equation}\label{eq:stepC}
    \textstyle
    \frac{1}{\bra{R}}
    \int_{\Omega_{\le R}}\frac{|v|^{2}}{|x|^{2}}dx
    \le
    2\int_{\Omega_{=1}}|v|^{2}dS
    +
    4 \int_{\Omega_{\le 1}}|\nabla^{b} v|^{2}dx
    \le
    2 \sqrt{2}\|v\|_{X}^{2}
    +
    4 \sqrt{2}\|\nabla^{b}v\|^{2}_{Y}
  \end{equation}
  while for $R>1$ it gives
  \begin{equation*}
    \textstyle
    \frac{1}{\bra{R}}
    \int_{\Omega_{\le R}}\frac{|v|^{2}}{|x|^{2}}dx
    \le
    \frac{2}{R \bra{R}}
    \int_{\Omega_{=R}}|v|^{2}dS
    +
    \frac{4}{\bra{R}}
    \int_{\Omega_{\le R}}|\nabla^{b} v|^{2}dx
    \le
    2 \sqrt{2}\|v\|_{X}^{2}
    +
    4 \|\nabla^{b}v\|^{2}_{Y}.
  \end{equation*}
  The last two inequalities together imply \eqref{eq:stnorma12}.
  To prove \eqref{eq:stnorma13}, we split 
  $\Omega=\Omega_{\le1}\cup \Omega_{\ge1}$ and we
  remark that
  \begin{equation*}
    \textstyle
    \int_{\Omega_{\le1}}
    \frac{|\nabla^{b}v| |v|}{|x|}dx
    \le
    \frac{1}{\sqrt{2}}
    \int_{\Omega_{\le1}}|\nabla^{b}v|^{2}+
    \frac{1}{2 \sqrt{2}}
    \int_{\Omega_{\le1}}\frac{|v|^{2}}{|x|^{2}}
    \le
    3\|\nabla^{b}v\|^{2}_{Y}
    +
    \sqrt{2}\|v\|_{X}^{2}
  \end{equation*}
  using \eqref{eq:stepC}, while using \eqref{eq:stnorma10b}
  \begin{equation*}
    \textstyle
    \int_{\Omega_{\ge1}}
    \frac{|\nabla^{b}v| |v|}{|x|^{2+\delta}}dx
    \le
    6 \delta^{-1}\|\nabla^{b}v\|^{2}_{Y}
    +
    6 \delta^{-1 }\|v\|_{X}^{2}.
  \end{equation*}
  Summing up, we obtain \eqref{eq:stnorma13}.

  On the other hand, from \eqref{eq:stepA} we can deduce also,
  for all $0<R<1$,
  \begin{equation*}
  \begin{split}
    \textstyle
    \frac1R\int_{\Omega_{=R}}|v|^{2}dS
    \le&
    \textstyle
    (n-2)\int_{\Omega_{\le R}}\frac{|v|^{2}}{|x|^{2}}dx
    +
    \int_{\Omega_{\le R}}\frac{|v|^{2}}{|x|^{2}}dx
    +
    \int_{\Omega_{\le R}}|\nabla^{b}v|^{2}dx
    \\
    \le&
    \textstyle
    (n-1)\int_{\Omega_{\le 1}}\frac{|v|^{2}}{|x|^{2}}dx
    +
    \int_{\Omega_{\le 1}}|\nabla^{b}v|^{2}dx.
  \end{split}
  \end{equation*}
  Together with \eqref{eq:stepB} with $R=1$, this gives,
  for all $0<R<1$,
  \begin{equation*}
    \textstyle
    \frac1R\int_{\Omega_{=R}}|v|^{2}dS
    \le
    \frac{2(n-1)}{(n-2)}
    \int_{\Omega_{=1}}|v|^{2}dS
    +\left(
    \frac{4(n-1)}{(n-2)^{2}}+1
    \right)
    \int_{\Omega_{\le1}}|\nabla^{b}v|^{2}dx
  \end{equation*}
  and recalling that $n\ge3$ we have proved for all $0<R<1$
  \begin{equation*}
    \textstyle
    \frac1R\int_{\Omega_{=R}}|v|^{2}dS\le
    4\int_{\Omega_{=1}}|v|^{2}dS +
    9\int_{\Omega_{\le1}}|\nabla^{b} v|^{2}dx.
  \end{equation*}
  from which \eqref{eq:stnorma7} and
  \eqref{eq:stnorma11} follow easily.
\end{proof}

By a density argument, it is clear that the estimates in
Lemmas \ref{lem:normest1}--\ref{lem:normest5} are valid
not only for smooth functions but also for functions belonging
to the domain of the operator $D(-A^{b}+c)$; in particular,
Lemmas \ref{lem:normest5} hold
in view of the Dirichlet boundary conditions in the definition
of the operator.

\section{Proof of the Theorems}\label{sec:proofsthms}

The proof consists in integrating the identity \eqref{eq:morid}
on $\Omega$ and estimating all the terms. Since
the arguments for both Theorems \ref{the:1}
and \ref{the:2} largely overlap, we shall proceed with
both proofs in parallel. The proof is divided into
several steps.

\subsection{Notations}\label{sub:notations}

Recall that we are using 
\emph{implicit summation over repeated indices} and
the notations
\begin{equation*}
  \textstyle
  \widehat{x}=\frac{x}{|x|}=
  (\widehat{x}_{1},\dots,\widehat{x}_{n}),\qquad
  \widehat{x}_{j}=\frac{x_{j}}{|x|},
\end{equation*}
\begin{equation*}
  \widehat{a}(x)=a_{\ell m}(x)\widehat{x}_{\ell}\widehat{x}_{m},
  \qquad
  \overline{a}(x)=\mathop{\textrm{trace}}a(x)=a_{mm}(x).
\end{equation*}
Since $a(x)$ is positive definite, we have
\begin{equation*}
  0\le \widehat{a}= a \widehat{x}\cdot \widehat{x}
  \le |a \widehat{x}|\le \overline{a}.
\end{equation*}
We denote with a semicolon the partial derivatives:
\begin{equation*}
  a_{jk;\ell}:=\partial_{\ell}a_{jk},\qquad
  a_{jk;\ell m}:=\partial_{\ell}\partial_{m}a_{jk},\qquad
  a_{jk;\ell mp}:=\partial_\ell\partial_ m\partial_p a_{jk}.
\end{equation*}
Notice the formulas
\begin{equation*}
  \partial_{k}(\widehat{x}_{\ell})=
  |x|^{-1}[\delta_{k\ell}-\widehat{x}_{k}\widehat{x}_{\ell}],
\end{equation*}
\begin{equation*}
  \partial_{k}(\widehat{x}_{\ell}\widehat{x}_{m})=
  |x|^{-1}[\delta_{k\ell}\widehat{x}_{m}+\delta_{km}
     \widehat{x}_{\ell}
  -2 \widehat{x}_{k} \widehat{x}_{\ell}\widehat{x}_{m}],
\end{equation*}
\begin{equation*}
\begin{split}
  \textstyle
  \partial_{j}\partial_{k}(\widehat{x}_{\ell}\widehat{x}_{m})=
  &
  \frac{1}{|x|^{2}}
  [
  \delta_{k\ell}\delta_{jm}+\delta_{km}\delta_{j\ell}
  +8 \widehat{x}_{j}\widehat{x}_{k}\widehat{x}_{\ell}
     \widehat{x}_{m}
    \\
  &
  -2 \delta_{k\ell}\widehat{x}_{j}\widehat{x}_{m}
  -2 \delta_{km}\widehat{x}_{j}\widehat{x}_{\ell}
  -2\delta_{jk}\widehat{x}_{\ell}\widehat{x}_{m}
  -2\delta_{j\ell}\widehat{x}_{k}\widehat{x}_{m}
  -2\delta_{jm}\widehat{x}_{k}\widehat{x}_{\ell}
  ]
\end{split}
\end{equation*}
which imply
\begin{equation*}
  a_{jk}a_{\ell m}\widehat{x}_{j}
  \partial_{k}(\widehat{x}_{\ell}\widehat{x}_{m})=
  2|x|^{-1}[|a \widehat{x}|^{2}-\widehat{a}^{2}],
\end{equation*}
and
\begin{equation*}
  \textstyle
  a_{jk}a_{\ell m}
  \partial_{j}\partial_{k}(\widehat{x}_{\ell}\widehat{x}_{m})=
  \frac{2}{|x|^{2}}
  [
  a_{\ell m}a_{\ell m}
  -4(|a \widehat{x}|^{2}-\widehat{a}^{2})
  -\overline{a}\widehat{a}
  ].
\end{equation*}
By the previous identities, for any
radial function $\psi(x)=\psi(|x|)$ we can write
\begin{equation}\label{eq:apsigen}
  A \psi(x)=\partial_{\ell}(a_{\ell m}\widehat{x}_{m}\psi')
  =
  \widehat{a}\psi''+
  \frac{\overline{a}-\widehat{a}}{|x|}
  \psi'
  +
  a_{\ell m;\ell}\widehat{x}_{m}\psi'
\end{equation}
where $\psi'$ denotes the derivative of $\psi(r)$ with respect
to the radial variable.

\subsection{Estimate of the \texorpdfstring{$\epsilon$}{} term}
\label{sub:epsilonterm}

We begin with the $\epsilon$-term in \eqref{eq:morid}
\begin{equation*}
  I_{\epsilon}:=2 \epsilon\Im[a(v\nabla \psi,\nabla^{b}v)].
\end{equation*}
We need a few auxiliary estimates.
Choosing $\phi=1$ in \eqref{eq:morid2} and taking the real part
we get
\begin{equation*}
  a(\nabla^{b}v,\nabla^{b}v)=
  (\lambda-c)|v|^{2}
  -\Re(f \overline{v})
  +\Re \partial_{j}\{\overline{v}a_{jk} \partial^{b}_{k}v\}
\end{equation*}
and assuming that
\begin{equation*}
  c(x)\ge - \sigma(x)^{2}
\end{equation*}
for some nonnegative function $\sigma(x)$ which will be precised
in the following (accordin to assumptions 
\eqref{eq:assc}--\eqref{eq:asscstrong}),
we obtain, with $\lambda_{+}=\max\{\lambda,0\}$,
\begin{equation}\label{eq:Rephi}
  a(\nabla^{b}v,\nabla^{b}v)\le
  (\lambda_{+}+\sigma^{2})|v|^{2}
  -\Re(f \overline{v})
  +\Re \partial_{j}\{\overline{v}a_{jk} \partial_{k}^{b}v\}.
\end{equation}
On the other hand taking the imaginary part of \eqref{eq:morid2} 
with $\phi=1$ gives
\begin{equation*}
  \epsilon|v|^{2}=
  \Im(f \overline{v})
  -\Im \partial_{j}\{\overline{v}a_{jk} \partial_{k}^{b}v\}
\end{equation*}
which can be written
\begin{equation}\label{eq:Imphi}
  |\epsilon||v|^{2}=
  \Im((\sgn \epsilon)f \overline{v})
  -\Im \partial_{j}\{(\sgn \epsilon)\overline{v}a_{jk} 
  \partial_{k}^{b}v\}.
\end{equation}
Now consider $I_{\epsilon}$
for an arbitrary function $\psi$ with bounded derivatives;
by Cauchy-Schwartz
\begin{equation*}
  |I_{\epsilon}|\le
  |\epsilon|a(v \nabla \psi,v \nabla \psi)
      (\lambda_{+}+\sigma^{2}+|\epsilon|)^{1/2}
  +
  |\epsilon|a(\nabla^{b}v,\nabla^{b}v)
      (\lambda_{+}+\sigma^{2}+|\epsilon|)^{-1/2}.
\end{equation*}
Since by $a(x)\le NI$ we have
\begin{equation*}%
  a(v \nabla \psi,v \nabla \psi)\le
  N\|\nabla\psi\|_{L^{\infty}}^{2}|v|^{2},
\end{equation*}
using \eqref{eq:Rephi} we obtain
\begin{equation*}%
\begin{split}
  \textstyle
  |I_{\epsilon}|
  \le
  N\|\nabla \psi\|^{2}_{L^{\infty}}
  {(\lambda_{+}+\sigma^{2}+|\epsilon|)^{1/2} }
  |\epsilon||v|^{2}
  +
  \frac{(\lambda_{+}+\sigma^{2})}
        {(\lambda_{+}+\sigma^{2}+|\epsilon|)^{1/2}}|
        \epsilon||v|^{2}
    \\
  +
  \textstyle
  \frac{|\epsilon|}{(\lambda_{+}+\sigma^{2}+|\epsilon|)^{1/2}}
  \Re(\partial_{j}\{\overline{v}a_{jk} \partial_{k}^{b}v\}-f 
  \overline{v})
\end{split}
\end{equation*}
and hence
\begin{equation*}
  \textstyle
  |I_{\epsilon}|
  \le
  (N\|\nabla \psi\|^{2}_{L^{\infty}}+1)
  {(\lambda_{+}+\sigma^{2}+|\epsilon|)^{1/2} }
  |\epsilon||v|^{2}
  +
  \frac{|\epsilon|}{(\lambda_{+}+\sigma^{2}+|\epsilon|)^{1/2}}
  \Re(\partial_{j}\{\overline{v}a_{jk} \partial_{k}^{b}v\}-f \overline{v}).
\end{equation*}
Using now \eqref{eq:Imphi} we get
\begin{equation*}
\begin{split}
  |I_{\epsilon}|\le
  (N\|\nabla \psi\|^{2}_{L^{\infty}}+1)
  {(\lambda_{+}+\sigma^{2}+|\epsilon|)^{1/2} }
  \Im[(\sgn \epsilon)f \overline{v})
  -\partial_{j}\{(\sgn \epsilon)\overline{v}a_{jk} 
  \partial_{k}^{b}v\}]
    \\
  \textstyle
  +
  \frac{|\epsilon|}{(\lambda_{+}+\sigma^{2}+|\epsilon|)^{1/2}}
  \Re(\partial_{j}\{\overline{v}a_{jk} 
  \partial_{k}^{b}v\}-f \overline{v})
\end{split}
\end{equation*}
which implies
\begin{equation*}
\begin{split}
  |I_{\epsilon}|\le
  &
  (N\|\nabla \psi\|^{2}_{L^{\infty}}+2)
  {(\lambda_{+}+\sigma^{2}+|\epsilon|)^{1/2} }
  |f \overline{v}|
  +
  \textstyle
  \frac{|\epsilon|}{(\lambda_{+}+\sigma^{2}+|\epsilon|)^{1/2}}
  \Re\partial_{j}\{\overline{v}a_{jk} \partial_{k}^{b}v\}-
    \\
  &-
  (N\|\nabla \psi\|^{2}_{L^{\infty}}+1)
  (\lambda_{+}+\sigma^{2}+|\epsilon|)^{1/2}
  \Im\partial_{j}\{(\sgn \epsilon)\overline{v}a_{jk} 
  \partial_{k}^{b}v\}.
\end{split}
\end{equation*}
Then we notice that for any vector valued function $Z=Z(x)$ 
\begin{equation*}
\begin{split}
  (\lambda_{+}+\sigma^{2}+|\epsilon|)^{1/2}
  \nabla \cdot Z
  = &
  \nabla \cdot\{(\lambda_{+}+\sigma^{2}+|\epsilon|)^{1/2} Z\}
  -
  \textstyle
  \frac{\sigma \nabla \sigma \cdot Z}
    {(\lambda_{+}+\sigma^{2}+|\epsilon|)^{1/2}}
    \\
  \le &
  \nabla \cdot\{(\lambda_{+}+\sigma^{2}+|\epsilon|)^{1/2} Z\}
  +|\nabla \sigma \cdot Z|
\end{split}
\end{equation*}
and similarly
\begin{equation*}
\begin{split}
  \textstyle
  \frac{|\epsilon|\nabla \cdot Z}
     {(\lambda_{+}+\sigma^{2}+|\epsilon|)^{1/2}}
  = &
  \textstyle
  \nabla \cdot \{
    \frac{|\epsilon| Z}{(\lambda_{+}+\sigma^{2}+
      |\epsilon|)^{1/2}}\} 
  +
  \frac{|\epsilon| \sigma \nabla \sigma \cdot Z}
     {(\lambda_{+}+\sigma^{2}+|\epsilon|)^{3/2}} 
    \\
  \le &
  \textstyle
  \nabla \cdot \{
    \frac{|\epsilon| Z}{(\lambda_{+}+\sigma^{2}+
    |\epsilon|)^{1/2}}\} 
  +|\nabla \sigma \cdot Z|.
\end{split}
\end{equation*}
Applying these estimates to the previous inequality we get
\begin{equation}\label{eq:ep1}
  |I_{\epsilon}|
  \le 
  (N\|\nabla \psi\|^{2}_{L^{\infty}}+2)
  \Bigl[
  {(\lambda_{+}+\sigma^{2}+|\epsilon|)^{1/2} }
  |f \overline{v}|
  +|a(\nabla^{b} v,v \nabla \sigma)|
  \Bigr]
  +
  \partial_{j}G_{j}
\end{equation}
where
\begin{equation*}
  \textstyle
  G_{j}=
    \frac{|\epsilon|\Re (\overline{v}a_{jk} \partial_{k}^{b}v)}
      {(\lambda_{+}+\sigma^{2}+|\epsilon|)^{1/2}}
    -
    (N\|\nabla \psi\|^{2}_{L^{\infty}}+1)
    (\lambda_{+}+\sigma^{2}+|\epsilon|)^{1/2}
    (\sgn \epsilon)
    \Im (\overline{v}a_{jk} \partial_{k}^{b}v).
\end{equation*}
We then integrate \eqref{eq:ep1} over the set
$\Omega\cap \{|x|\le R\}$. The boundary terms $G_{j}$ vanish
at $\partial \Omega$
in view of the Dirichlet conditions; on the other hand, at the 
remaining part of the boundary $\Omega\cap \{|x|=R\}$ we get the
quantity
\begin{equation*}
  \textstyle
  \int_{\Omega_{=R}}
  \nu_{j}G_{j}dS
\end{equation*}
where $\vec{\nu}=(\nu_{1},\dots,\nu_{n})$ 
is the exterior normal
and $dS$ is the surface measure on the sphere $\{|x|= R\}$.
Since $v\in H^{1}(\Omega)$, and the coefficients
and $\sigma(x)$ are bounded for $x$ large, we get
\begin{equation*}
  \textstyle
  \liminf_{R\to+\infty}
  \int_{\Omega\cap \{|x|=R\}}
  \nu_{j}G_{j}dS=0
\end{equation*}
and hence the boundary term vanishes after
integration over $\Omega$:
\begin{equation*}
  \textstyle
  \int_{\Omega}
  (\partial_{j}G_{j})dx=0.
\end{equation*}
Thus integration of \eqref{eq:ep1} over $\Omega$ gives
\begin{equation}\label{eq:ep2}
  \textstyle
  \int_{\Omega}|I_{\epsilon}|dx
  \le
  (N\|\nabla \psi\|^{2}_{L^{\infty}}+2)
  \int_{\Omega}
  \bigl[
  {(\lambda_{+}+\sigma^{2}+|\epsilon|)^{1/2} }
  |f \overline{v}|
  +|a(\nabla^{b} v,v \nabla \sigma)|
  \bigr]
  dx.
\end{equation}
In order to control the RHS of \eqref{eq:ep2} we need a few more
estimates, which are different for the homogeneous and the
nonhomogeneous cases. We begin with the homogeneous estimate,
under assumption \eqref{eq:assc}. Thus we can take
\begin{equation*}
  \textstyle
  \sigma(x)=
  \frac{C_{-}}{|x|^{1+\delta/2}\vee|x|^{1-\delta/2}}
  \le C_{-}|x|^{-1}
\end{equation*}
so that
\begin{equation}\label{eq:ep3}
  \textstyle
  \int \sigma|f\overline{v}|\le
  C_{-}\|f\|_{\dot Y^{*}}\||x|^{-1}v\|_{\dot Y}
  \le
  C_{-}\|f\|_{\dot Y^{*}}\|v\|_{\dot X}
\end{equation}
(see \eqref{eq:stnorma8}). 
Second, we take the imaginary part of \eqref{eq:morid2}
\begin{equation*}%
  \partial_{j}\Im P_{j}=
  -\epsilon|v|^{2}\phi
  +\Im(f \overline{v})\phi
  +\Im a(\nabla^{b} v,v \nabla \phi),
\end{equation*}
and we choose $\phi$ as follows:
\begin{equation}\label{eq:choicephi}
  \textstyle
  \phi(x)=1 \ \text{if}\ |x|\le R,\quad
  \phi(x)=2-\frac{|x|}{R} \ \text{if}\ R\le|x|\le2R,\quad
  \phi(x)=0 \ \text{if}\ |x|\ge R.
\end{equation}
We note also that the integral of $\partial_{j} P_{j}$ over
$\Omega$ vanishes as above by the Dirichlet boundary conditions.
This gives easily, using \eqref{eq:stnorma5},
\begin{equation}\label{eq:interm}
\begin{split}
  \textstyle
  |\epsilon|
  \int_{\Omega_{\le R}}
  |v|^{2}
  \le &
  \textstyle
  \int_{\Omega_{\le2R}}
  |f \overline{v}|
  +\frac{N}{R}
  \int_{\Omega_{R\le|x|\le 2R}}
  |v||\nabla^{b} v|
    \\
  \le &
  2R\|f\|_{\dot Y^{*}}\|v\|_{\dot X}+
  3NR\|v\|_{\dot X}\|\nabla^{b} v\|_{\dot Y}.
\end{split}
\end{equation}
Dividing by $R$ and taking the sup for $R>0$ gives
\begin{equation}\label{eq:epsest}
  |\epsilon|\|v\|_{\dot Y}^{2}
  \le
  3(1+N)
  \left[\|f\|_{\dot Y^{*}}+\|\nabla^{b} v\|_{\dot Y}\right]
  \|v\|_{\dot X}.
\end{equation}
As a consequence, we can write
\begin{equation*}
  \textstyle
  |\epsilon|^{1/2}\int_{\Omega}|f \overline{v}|
  \le
  \|f\|_{\dot Y^{*}}
  \left[3(1+N)\right]^{1/2}
  \left[\|f\|_{\dot Y^{*}}+\|\nabla^{b} v\|_{\dot Y}\right]^{1/2}
  \|v\|_{\dot X}^{1/2}
\end{equation*}
and this implies
\begin{equation}\label{eq:ep4}
  \textstyle
  |\epsilon|^{1/2}\int_{\Omega}|f \overline{v}|
  \le
  (N+1)^{1/2}
  \left[
    \|f\|_{\dot Y^{*}}^{2}
    +\|f\|_{\dot Y^{*}}\|\nabla^{b} v\|_{\dot Y}
    +\|f\|_{\dot Y^{*}}\|v\|_{\dot X}
  \right].
\end{equation}
If we take instead the real part of \eqref{eq:morid2},
split $c=c_{+}-c_{-}$ into positive and negative 
part, and write $\lambda_{\pm}=\max\{\pm\lambda,0\}$,
we have
\begin{equation*}
  (c_{-}+\lambda_{+})|v|^{2}\phi=
  -\partial_{j}\Re P_{j}
  +a(\nabla^{b}v,\nabla^{b}v)\phi
  +(c_{+}+\lambda_{-})|v|^{2}\phi
  +\Re(f \overline{v})\phi
  -\textstyle \frac12 A \phi|v|^{2}.
\end{equation*}
We choose the same function $\phi$ as in \eqref{eq:choicephi}.
When $\lambda=\lambda_{+}\ge0$,
by assumptions \eqref{eq:assaabove}, \eqref{eq:assader}
we can write
\begin{equation*}
  \textstyle
  -\frac12 A \phi=
  \frac{\overline{a}-\widehat{a}+|x|a_{jk;j}\widehat{x}_{k}}{R|x|}
  \one{R\le|x|\le 2R}
  +\frac{\widehat{a}}{R}(\delta_{|x|=2R}-\delta_{|x|=R})
  \le
  n\frac{N+nC_{a}}{R|x|}\one{R\le|x|\le 2R}
  +\frac{N}{R}\delta_{|x|=2R}
\end{equation*}
and hence
\begin{equation}\label{eq:interm2}
\begin{split}
  \textstyle
  \lambda_{+}|v|^{2}\one{|x|\le R}
  \le
  -\partial_{j}\Re P_{j}
  &
  +(N|\nabla^{b}v|^{2}+c_{+}|v|^{2}+|f \overline{v}|)
  \one{|x|\le 2R}
  \\
  &
  \textstyle
  +n\frac{N+nC_{a}}{R|x|}|v|^{2}\one{R\le|x|\le 2R}
  +\frac{N}{R}|v|^{2}\delta_{|x|=2R}.
\end{split}
\end{equation}
Integrating over $\Omega$, the boundary term disappears 
as usual, and
dividing by $R$ and taking the sup over $R>0$ we get
\begin{equation*}
  \lambda_{+}\|v\|_{\dot Y}^{2}\le
  2N\|\nabla^{b}v\|_{\dot Y}^{2}
  +2\|c_{+}^{1/2}v\|_{\dot Y}^{2}
  +4(N(n+1)+n^{2}C_{a}+1)\|v\|_{\dot X}^{2}
  +\|f\|_{\dot Y^{*}}^{2}
\end{equation*}
(see \eqref{eq:stnorma5}, \eqref{eq:stnorma9}). Using the 
second half of assumption \eqref{eq:assc}
and \eqref{eq:stnorma8}, this gives
\begin{equation}\label{eq:lambdapos}
  \lambda_{+}\|v\|_{\dot Y}^{2}\le
  2N\|\nabla^{b}v\|_{\dot Y}^{2}
  +4(C_{+}^{2}+N(n+1)+n^{2}C_{a}+1)\|v\|_{\dot X}^{2}
  +\|f\|_{\dot Y^{*}}^{2}
\end{equation}
and hence
\begin{equation}\label{eq:ep5}
  \textstyle
  \sqrt{\lambda_{+}}\int_{\Omega}|f \overline{v}|
  \le
  \|f\|_{\dot Y^{*}}
  \left[
    2N\|\nabla^{b}v\|_{\dot Y}^{2}
    +4(C_{+}^{2}+N(n+1)+n^{2}C_{a}+1)\|v\|_{\dot X}^{2}
    +\|f\|_{\dot Y^{*}}^{2}
  \right]^{1/2}
\end{equation}
On the other hand, when $\lambda\le0$ i.e. $\lambda=-\lambda_{-}$,
we rewrite \eqref{eq:morid2} in the form
\begin{equation*}
  (c_{+}+\lambda_{-})|v|^{2}\phi=
  \partial_{j}\Re P_{j}
  -a(\nabla^{b}v,\nabla^{b}v)\
  +(c_{-}+\lambda_{+})|v|^{2}\phi
  -\Re(f \phi\overline{v})\phi
  +\textstyle \frac12 A \phi|v|^{2}
\end{equation*}
and with the same choice of $\phi$ as above we have
(since $\overline{a}\ge \widehat{a}$)
\begin{equation*}
  \textstyle
  \frac12 A \phi=
  -\frac{\overline{a}-\widehat{a}+|x|a_{jk;j}\widehat{x}_{k}}{R|x|}
  \one{R\le|x|\le 2R}
  +\frac{\widehat{a}}{R}(\delta_{|x|=R}-\delta_{|x|=2R})
  \le
  \frac{n^{2}C_{a}}{R|x|}\one{R\le|x|\le 2R}
  +\frac{N}{R}\delta_{|x|=R}
\end{equation*}
which implies ($\lambda_{+}=0$)
\begin{equation}\label{eq:interm3}
  \textstyle
  \lambda_{-}|v|^{2}\one{|x|\le R}
  \le
  \partial_{j}\Re P_{j}
  +(c_{-}|v|^{2}+|f \overline{v}|)
  \one{|x|\le 2R}
  +\frac{n^{2}C_{a}}{R|x|}|v|^{2}\one{R\le|x|\le 2R}
  +\frac{N}{R}|v|^{2}\delta_{|x|=R}.
\end{equation}
Proceeding exactly as above but using the fact that
$c_{-}(x)\le C_{-}^{2}|x|^{-2}$
by \eqref{eq:assc}, we conclude that
\begin{equation}\label{eq:lambdaneg}
  \lambda_{-}\|v\|_{\dot Y}^{2}\le
  2(C_{-}^{2}+N+n^{2}C_{a}+1)\|v\|_{\dot X}^{2}
  +\|f\|_{\dot Y^{*}}^{2}.
\end{equation}
For the remaining term in \eqref{eq:ep2}, we note that
\begin{equation*}
  \textstyle
  \sigma(x)=
  \frac{C_{-}}{|x|^{1+\delta/2}\vee|x|^{1-\delta/2}}
  \quad\implies\quad
  |\nabla\sigma|
  \le
  2C_{-} \cdot
  \begin{cases}
    |x|^{-2+\delta/2} &\text{if $ |x|\le1 $,}\\
    |x|^{-2-\delta/2} &\text{if $ |x|\ge1 $.}
  \end{cases}
\end{equation*}
and by \eqref{eq:stnorma10} we get
\begin{equation}\label{eq:ep6}
  \textstyle
  \int_{\Omega}|a(\nabla^{b} v,v \nabla \sigma)|\le
  36 \delta^{-1}NC_{-}
  \|\nabla^{b}v\|_{\dot Y}\|v\|_{\dot X}.
\end{equation}
We assume now that
\begin{equation}\label{eq:boundpsi}
  \|\nabla \psi\|_{L^{\infty}}\le1
\end{equation}
(the explicit choice of the weight will be done in the
next step) so that,
summing up, we can estimate \eqref{eq:ep2} via
\eqref{eq:ep3}, \eqref{eq:ep4}, \eqref{eq:ep5}, 
\eqref{eq:ep6} to obtain
\begin{equation*}
\begin{split}
  \textstyle
  (N+2)^{-1}
  \int_{\Omega}|I_{\epsilon}|\le
  & \textstyle
    4(C_{-}^{2}+C_{+}^{2}+\frac43 Nn+n^{2}C_{a}+1)^{1/2}
    \|f\|_{\dot Y^{*}}\|v\|_{\dot X}
    \\
    &
    \textstyle
    +\frac52\sqrt{N+1}\|f\|_{\dot Y^{*}}\|\nabla^{b} v\|_{\dot Y}
    +\|f\|_{\dot Y^{*}}^{2}
    +36 \delta^{-1}N C_{-}\|\nabla^{b}v\|_{\dot Y}\|v\|_{\dot X}
\end{split}
\end{equation*}
whence one gets, for any $0<\zeta\le1$, the final
homogeneous estimate of $I_{\epsilon}$
\begin{equation}\label{eq:estIep}
\begin{split}
  \textstyle
  \int_{\Omega}|I_{\epsilon}|
  \le&
  (\zeta+18 \delta^{-1}N(N+2)C_{-})
  [\|\nabla^{b}v\|_{\dot Y}^{2}+\|v\|_{\dot X}^{2}]
  \\
  &+
  8(N+2)^{2}(C_{-}^{2}+C_{+}^{2}+Nn+n^{2}C_{a}+1)\zeta^{-1}
  \|f\|_{\dot Y^{*}}^{2}
\end{split}
\end{equation}
under assumption \eqref{eq:assc}.

We show now how to get a nonhomogeneous estimate 
for $I_{\epsilon}$ under the stronger assumption 
\eqref{eq:asscstrong}, thus we can take now
\begin{equation*}
  \sigma(x)=C_{-}\bra{x}^{-1-\delta/2}.
\end{equation*}
Starting again from \eqref{eq:ep2}, we have,
since $\sigma(x)\le C_{-}|x|^{-1}$,
\begin{equation}\label{eq:ep3bis}
  \textstyle
  \int \sigma|f \overline{v}|
  \le C_{-}\|f\|_{Y^{*}}\||x|^{-1} v\|_{Y}\le
  3C_{-}\|f\|_{Y^{*}}
  (\| v\|_{X}^{2}+\|\nabla^{b}v\|_{Y}^{2})^{1/2}
\end{equation}
by \eqref{eq:stnorma12}. Then we consider again \eqref{eq:interm}
where we estimate as follows
\begin{equation*}
  \textstyle
  |\epsilon|\int_{\Omega_{\le R}}|v|^{2}\le
  \bra{2R}\|v\|_{X}\|f\|_{Y^{*}}+
  \frac{N}{R}\cdot 3 \bra{R}^{2}\|v\|_{X}\|\nabla^{b}v\|_{Y}
\end{equation*}
by \eqref{eq:stnorma5b}. Dividing by $R$, taking
the sup over $R>1$ and recalling \eqref{eq:equivnonhom}
we get
\begin{equation}\label{eq:epsestbis}
  |\epsilon|\|v\|_{Y}^{2}\le
  \sqrt{5}\|v\|_{X}\|f\|_{Y^{*}}+
  6N\|v\|_{X}\|\nabla^{b}v\|_{Y}
\end{equation}
Thus
\begin{equation*}
  \textstyle
  |\epsilon|^{1/2}\int_{\Omega}|f \overline{v}|
  \le
  \|f\|_{ Y^{*}}
  \left[  \sqrt{5}
  \|f\|_{ Y^{*}}+6N\|\nabla^{b} v\|_{ Y}\right]^{1/2}
  \|v\|_{ X}^{1/2}
\end{equation*}
and this implies
\begin{equation}\label{eq:ep4bis}
  \textstyle
  |\epsilon|^{1/2}\int_{\Omega}|f \overline{v}|
  \le
  2(N+1)^{1/2}
  \left[
    \|f\|_{Y^{*}}^{2}
    +\|f\|_{Y^{*}}\|\nabla^{b} v\|_{Y}
    +\|f\|_{Y^{*}}\|v\|_{X}
  \right].
\end{equation}
Next, in the case $\lambda=\lambda_{+}\ge0$
we integrate \eqref{eq:interm2} over $\Omega$ and
we take the sup over $R>1$ to get
\begin{equation*}
  \lambda_{+}\|v\|_{Y}^{2}\le
  2\sqrt{2}N\|\nabla^{b}v\|_{Y}^{2}+
  2\sqrt{2}\|c_{+}^{1/2}v\|_{Y}^{2}+
  (3n(N+nC_{a})+2N+2)\|v\|_{X}^{2}+
  \|f\|^{2}_{Y^{*}}
\end{equation*}
where we used \eqref{eq:stnorma5b} and \eqref{eq:stnorma6}.
Since $c_{+}(x)\le C_{+}|x|^{-2}$, by \eqref{eq:stnorma12}
we obtain
\begin{equation}\label{eq:lambdaposbis}
  \lambda_{+}\|v\|_{Y}^{2}\le
  3(N+6C_{+}^{2})\|\nabla^{b}v\|_{Y}^{2}
  +3(N(n+1)+n^{2}C_{a}+3C_{+}^{2})\|v\|_{X}^{2}
  +\|f\|_{Y^{*}}^{2}
\end{equation}
and this implies
\begin{equation}\label{eq:ep5bis}
\begin{split}
  \sqrt{\lambda_{+}} & 
  \textstyle
  \int_{\Omega}|f \overline{v}|dx
  \le
    \\
  &\le\|f\|_{Y^{*}}
  \left[
    3(N+6C_{+}^{2})\|\nabla^{b}v\|_{Y}^{2}
  +3(N(n+1)+n^{2}C_{a}+3C_{+}^{2})\|v\|_{X}^{2}
  +\|f\|_{Y^{*}}^{2}
  \right]^{1/2}.
\end{split}
\end{equation}
On the other hand, when $\lambda=-\lambda_{-}\le 0$,
we integrate \eqref{eq:interm3} over $\Omega$,
divide by $R$ and take the sup over $R>1$ to obtain,
using \eqref{eq:stnorma5b}, \eqref{eq:stnorma6},
\begin{equation*}
  \lambda_{-}\|v\|_{Y}^{2}\le
  2 \sqrt{2}\|c_{-}^{1/2}v\|_{Y}^{2}
  +(2(N+1)+3n^{2}C_{a})\|v\|_{X}^{2}
  +\|f\|_{Y^{*}}^{2}
\end{equation*}
and since $c_{-}(x)\le C_{-}|x|^{-1}$, by \eqref{eq:stnorma12}
we get
\begin{equation}\label{eq:lambdanegbis}
  \lambda_{-}\|v\|_{Y}^{2}\le
  18C_{-}^{2}\|\nabla^{b}v\|_{Y}^{2}
  +3(N+n^{2}C_{a}+6C_{-}^{2}+1)\|v\|_{X}^{2}
  +\|f\|_{Y^{*}}^{2}
\end{equation}
To estimate last term in \eqref{eq:ep2} 
we notice that
$|\nabla \sigma|\le 2C_{-}\bra{x}^{-2-\delta/2}$ 
and hence
\begin{equation}\label{eq:ep6bis}
  \textstyle
  \int_{\Omega}|a(\nabla^{b} v,v \nabla \sigma)|\le
  2NC_{-}\int_{\Omega}
     \frac{|\nabla^{b}v||v|}{\bra{x}^{2+\delta/2}}dx\le
  18 \delta^{-1}NC_{-}
  (\|\nabla^{b}v\|_{Y}^{2}+\|v\|_{X}^{2})
\end{equation}
by \eqref{eq:stnorma13}. Summing up we get in a few steps
(using again \eqref{eq:boundpsi})
\begin{equation*}
\begin{split}
  \textstyle
  (N+2)^{-1}
  \int_{\Omega}|I_{\epsilon}|
  \le &
  9(C_{-}^{2}+4C_{+}^{2}+Nn+n^{2}C_{a}+1)^{1/2}
  \|f\|_{Y^{*}}[\|v\|_{X}+\|\nabla^{b}v\|_{Y}]
  \\
  &+3(N+1)\|f\|^{2}_{Y^{*}}
  +18 \delta^{-1}NC_{-}
  (\|\nabla^{b}v\|_{Y}^{2}+\|v\|_{X}^{2})
\end{split}
\end{equation*}
and hence, for every $0<\zeta\le1$, we obtain
\begin{equation}\label{eq:estIepbis}
\begin{split}
  \textstyle
  \int_{\Omega}|I_{\epsilon}|
  \le &
  (\zeta+ 18 \delta^{-1}N(N+2)C_{-})
  [\|\nabla^{b}v\|^{2}_{Y}+\|v\|^{2}_{X}]
\\
  &+
  44(N+2)^{2}(C_{-}^{2}+4C_{+}^{2}+Nn+n^{2}C_{a}+1)
  \zeta^{-1}\|f\|_{Y^{*}}^{2}
\end{split}
\end{equation}
under assumption \eqref{eq:asscstrong}.

\subsection{Choice of the weight \texorpdfstring{$\psi$}{}}
\label{sub:choice_of_the_weight_y_}

Our choice of the weight function $\psi$ in \eqref{eq:morid}
is inspired by
\cite{DAnconaRacke12-a},
\cite{BarceloFanelliRuiz13-a}  
(see also 
\cite{DAnconaFanelli09-a},
\cite{Fanelli09-b}). Define
\begin{equation}\label{psi0}
  \textstyle
  \psi_0(r)=\int_0^r\psi'(s)ds
\end{equation}
where
\begin{equation*}
  \psi'_1(r)=
  \begin{cases}
  \frac{n-1}{2n}r,
  & r\leq1
  \\
  \frac{1}{2}-\frac{1}{2nr^{n-1}},
  & r>1.
  \end{cases}
\end{equation*}
Then $\psi$ is the
radial function, depending on a scaling parameter $R>0$,
\begin{equation*}
  \textstyle
  \psi(|x|)\equiv\psi_R(|x|):=R\psi_1\left(\frac{|x|}{R}\right).
\end{equation*}
Here and in the following, with a slight abuse, we shall use
the same letter $\psi$ to denote a function $\psi(r)$
defined for $r\in \mathbb{R}^{+}$ and
the radial function $\psi(x)=\psi(|x|)$ 
defined on $\mathbb{R}^{n}$. We compute the first radial
derivatives $\psi^{(j)}(r)=(\frac{x}{|x|}\cdot \nabla)^{j}\psi(x)$
for $|x|>0$:
\begin{equation}\label{eq:psi1}
  \psi'(x)=
  \begin{cases}
  \frac{n-1}{2n}\cdot\frac{|x|}{R},& |x|\leq R
  \\
  \frac{1}{2}-\frac{R^{n-1}}{2n|x|^{n-1}},
  & |x|>R
  \end{cases}
\end{equation}
which can be equivalently written as
\begin{equation*}
  \textstyle
  \psi'(x)=
  \frac{|x|}{2nR}
  \left[
    n \frac{R}{R\vee|x|}-(\frac{R}{R\vee|x|})^{n}
  \right]
\end{equation*}
and implies in particular that the assumption
\eqref{eq:boundpsi} used in the previous
step is satisfied, and actually
\begin{equation}\label{eq:bddpsip}
  \textstyle
  0\le \psi'\le \frac{1}{2}.
\end{equation}
Then we have
\begin{equation}\label{eq:psi2}
  \psi''(x)=
  \textstyle
  \frac{n-1}{2n}\cdot\frac{R^{n-1}}{(R\vee|x|)^n}
  =
  \frac{n-1}{2n}\cdot
  \begin{cases}
  \frac{1}{R} & |x|\leq R
  \\
  \frac{R^{n-1}}{|x|^n} & |x|>R,
  \end{cases}
\end{equation}
\begin{equation}\label{eq:psi3}
  \psi'''(x)=
  \textstyle
  -\frac{n-1}{2}\frac{R^{n-1}}{|x|^{n+1}}\one{|x|\ge R}
\end{equation}
\begin{equation}\label{eq:psi4}
  \psi^{IV}(x)=
  \textstyle
  \frac{n^{2}-1}{2}\cdot \frac{R^{n-1}}{|x|^{n+2}}\one{|x|\ge R}
  -
  \frac{n-1}{2}\frac{1}{R^{2}}\delta_{|x|= R}
  .
\end{equation}
%
%
Note in particular that
\begin{equation}\label{eq:diffpsi}
  \psi''-\frac{\psi'}{|x|}=
  \begin{cases}
    0
     &|x|\le R\\
    -\frac{1}{2|x|}
    \left(1-\frac{R^{n-1}}{|x|^{n-1}}\right)
     &|x|>R.
  \end{cases}
\end{equation}
Moreover, we choose $\phi=0$ and we see
that (see \eqref{eq:apsigen})
\begin{equation}\label{eq:Apsiphi}
  \textstyle
  A \psi+\phi = A \psi=
  \widehat{a}\psi''
  +
  \frac{\overline{a}-\widehat{a}}{|x|}
  \psi'
  +
  a_{\ell m;\ell}\widehat{x}_{m}\psi'.
\end{equation}
is continuous and piecewise Lipschitz.

\subsection{Estimate of the terms in \texorpdfstring{$|v|^{2}$}{}}
\label{sub:v2}

Since $\phi\equiv0$, these terms reduce to
\begin{equation}\label{eq:termsv2}
  I_{|v|^{2}}=
  -\frac12 A^{2}\psi|v|^{2}
  -a(\nabla \psi,\nabla c)|v|^{2}.
\end{equation}
First of all we 
need to compute the quantity $A(A \psi+\phi)\equiv A^{2}\psi$.
Using the identity
\begin{equation*}
  A(uv)=(Au)v+u(Av)+2a(\nabla u,\nabla v)
\end{equation*}
we can write
\begin{equation*}
  A^{2}\psi=I+II+III+A(a_{\ell m;\ell}\widehat{x}_{m}\psi')
\end{equation*}
where
\begin{equation*}
  I=\widehat{a}\cdot A {\psi''}+
  (\overline{a}-\widehat{a})A\left(\frac{\psi'}{|x|}\right),
\end{equation*}
\begin{equation*}
  II=A \widehat{a}\cdot {\psi''}+
  A(\overline{a}-\widehat{a})\cdot \frac{\psi'}{|x|}
\end{equation*}
and
\begin{equation*}
  \textstyle
  III=2a(\nabla \widehat{a},\nabla {\psi''})+
  2a(\nabla(\overline{a}-\widehat{a}),\nabla \frac{\psi'}{|x|}).
\end{equation*}
We separate the terms which do not contain
derivatives of the coefficients $a_{jk}$ from the others.
We have $I=I_{1}+I_{2}$ with
\begin{equation*}
  \textstyle
  I_{1}=
  \widehat{a}^{2}{\psi^{IV}}+
  \widehat{a}(\overline{a}-\widehat{a})
  \frac{{\psi'''}+\psi'''}{|x|}+
  \frac{(\overline{a}-\widehat{a})(\overline{a}-3 
  \widehat{a})}{|x|^{2}}
  \left(
  \psi''-\frac{\psi'}{|x|}
  \right),
\end{equation*}
\begin{equation*}
  \textstyle
  I_{2}=
  \widehat{a}a_{\ell m;\ell}\widehat{x}_{m}{\psi'''}+
  (\overline{a}-\widehat{a})
  a_{jk;j}\widehat{x}_{k}
  \left(
  \frac{\psi''}{|x|}-\frac{\psi'}{|x|^{2}}
  \right)
\end{equation*}
where
\begin{equation}\label{eq:psiti3}
  \psi'''
  \textstyle
  =
  -\frac{n-1}{2}
  \frac{R^{n-1}}{|x|^{n+1}}\one{|x|\ge R},
\end{equation}
\begin{equation}\label{eq:psiti4}
  \textstyle
  {\psi^{IV}}
  =
  \frac{n^{2}-1}{2}
  \frac{R^{n-1}}{|x|^{n+2}}\one{|x|\ge R}
  +
  \frac{n-1}{2}\frac{1}{R^{2}}\delta_{|x|=R}.
\end{equation}
In a similar way, $II=II_{1}+II_{2}$ with
\begin{equation*}
  \textstyle
  II_{1}=
  \frac{2}{|x|^{2}}
  [
  a_{\ell m}a_{\ell m}
  -4(|a \widehat{x}|^{2}-\widehat{a}^{2})
  -\overline{a}\widehat{a}
  ]
  \left(
  {\psi''}-\frac{\psi'}{|x|}
  \right),
\end{equation*}
\begin{equation*}
  \textstyle
  II_{2}=
  [
  \partial_{j}(a_{jk}a_{\ell m;k}\widehat{x}_{\ell}
  \widehat{x}_{m})
  +\partial_{j}(a_{jk}a_{\ell m})\partial_{k}(\widehat{x}_{\ell}
      \widehat{x}_{m})
  ]
  \left(
  {\psi''}-\frac{\psi'}{|x|}
  \right)
  +
  (A \overline{a})\frac{\psi'}{|x|}
\end{equation*}
and $III=III_{1}+III_{2}$ with
\begin{equation*}
  \textstyle
  III_{1}=
  \frac{4}{|x|}[|a \widehat{x}|^{2}-\widehat{a}^{2}]
  \left(
  {\psi'''}
  -\frac{\psi''}{|x|}
  +\frac{\psi'}{|x|^{2}}
  \right),
\end{equation*}
\begin{equation*}
  \textstyle
  III_{2}=
  2a_{jk}a_{\ell m;k}\widehat{x}_{\ell}\widehat{x}_{m}
  \widehat{x}_{j}
  \left({\psi'''}-\frac{\psi''}{|x|}\right)
  +2a(\nabla \overline{a},\nabla \frac{\psi'}{|x|}).
\end{equation*}
Collecting all the terms  we get
\begin{equation}\label{eq:splitting}
  A^{2}\psi=S(x)+R(x)
\end{equation}
where
\begin{equation*}
\begin{split}
  S(x)=&
  \textstyle
  \widehat{a}^{2}{\psi^{IV}}+
  2\widehat{a}(\overline{a}-\widehat{a})
  \frac{\psi'''}{|x|}+
  \frac{(\overline{a}-\widehat{a})(\overline{a}-3 
  \widehat{a})}{|x|^{2}}
  \left(
  \psi''-\frac{\psi'}{|x|}
  \right)+
    \\
  &+
  \textstyle
  \frac{2}{|x|^{2}}
  [
  a_{\ell m}a_{\ell m}
  -\overline{a}\widehat{a}
  -4(|a \widehat{x}|^{2}-\widehat{a}^{2})
  ]
  \left(
  {\psi''}-\frac{\psi'}{|x|}
  \right)+
    \\
  &
  \textstyle
  +\frac{4}{|x|}[|a \widehat{x}|^{2}-\widehat{a}^{2}]
  \left(
  {\psi'''}
  -\frac{\psi''}{|x|}
  +\frac{\psi'}{|x|^{2}}
  \right)
\end{split}
\end{equation*}
and
\begin{equation*}
  \textstyle
\begin{split}
  R(x)=
  &
  \widehat{a}a_{\ell m;\ell}\widehat{x}_{m}{\psi'''}+
  (\overline{a}-\widehat{a})
  a_{jk;j}\widehat{x}_{k}
  \textstyle
  \left(
  \frac{\psi''}{|x|}-\frac{\psi'}{|x|^{2}}
  \right)+
    \\
  &+
    [
  \partial_{j}(a_{jk}a_{\ell m;k}\widehat{x}_{\ell}
  \widehat{x}_{m})
  +\partial_{j}(a_{jk}a_{\ell m})\partial_{k}(\widehat{x}_{\ell}
      \widehat{x}_{m})
  ]
  \textstyle
  \left(
  {\psi''}-\frac{\psi'}{|x|}
  \right)
  +
  (A \overline{a})\frac{\psi'}{|x|}+
    \\
  &+
    2a_{jk}a_{\ell m;k}\widehat{x}_{\ell}\widehat{x}_{m}
    \widehat{x}_{j}
    \textstyle
    \left({\psi'''}-\frac{\psi''}{|x|}\right)
    +2a(\nabla \overline{a},\nabla \frac{\psi'}{|x|})+
    \\ 
  &+
  \textstyle
  A(a_{\ell m;\ell}\widehat{x}_{m}\psi').
\end{split}
\end{equation*}
The remainder $R(x)$ can be estimated as follows:
recalling that, by definition of $\psi$,
\begin{equation*}
  \textstyle
  |\psi'|\le \frac{|x|}{2 (R\vee|x|)},\qquad
  |\psi''|\le \frac{n-1}{2n (R\vee|x|)},\qquad
  |\psi'''|\le \frac{n-1}{2 (R\vee|x|)|x|}
\end{equation*}
and the assumptions \eqref{eq:assader},
then after a long but elementary computation
we find ($n\ge3$)
\begin{equation}\label{eq:estR}
  |R(x)|\le
  \frac{12 n C_{a}(N+C_{a})}{|x|\bra{x}^{1+\delta}(R\vee|x|)}.
\end{equation}

We focus now on the main term $S(x)$,
which can be written
\begin{equation}\label{eq:Sx2}
\begin{split}
  S(x)=
  &
  \textstyle
  \widehat{a}^{2}\psi^{IV}+
  [2 \overline{a}\widehat{a}-6 \widehat{a}^{2}+4|a 
  \widehat{x}|^{2}]
     \frac{\psi'''}{|x|}+
    \\
  &+
  \textstyle
  [2a_{\ell m}a_{\ell m}
     +\overline{a}^{2}
     -6 \overline{a}\widehat{a}
     +15 \widehat{a}^{2}
     -12 |a \widehat{x}|^{2}]
     \left(\frac{\psi''}{|x|^{2}}-\frac{\psi'}{|x|^{3}}\right)
\end{split}
\end{equation}
With our choice of the weight $\psi$
we have in the region $|x|\le R$
\begin{equation}\label{eq:SlessR}
  \textstyle
  S(x)=
  -\frac{n-1}{2}\widehat{a}^{2}
  \frac{1}{R^{2}}\delta_{|x|=R}
\end{equation}
while in the region $|x|>R$
\begin{equation}\label{eq:SgreR}
\begin{split}
  S(x)=
  &
  \textstyle
  (n-1)
  \left[
    \frac{n+3}{2}\widehat{a}-\overline{a}
  \right]
  \widehat{a}
  \frac{R^{n-1}}{|x|^{n+2}}
  -
  2(n-1)
  [|a \widehat{x}|^{2}-\widehat{a}^{2}]
  \frac{R^{n-1}}{|x|^{n+2}}-
    \\
  &
  \textstyle
  -
  [
    2a_{\ell m}a_{\ell m}+\overline{a}^{2}
    -6 \overline{a}\widehat{a}+15 \widehat{a}^{2}
    -12|a \widehat{x}|^{2}
  ]
  \bigl(
  1-(\frac{R}{|x|})^{n-1}
  \bigr)
  \frac{1}{2|x|^{3}}.
\end{split}
\end{equation}
Note that $a_{\ell m}a_{\ell m}$ is the square of the
Hilbert-Schmidt norm of the matrix $a(x)$.

To proceed, we handle the cases of Theorems \ref{the:1}
and \ref{the:2} separately.
For Theorem \ref{the:1}, 
we deduce from assumption \eqref{eq:assaabove}
\begin{equation*}
  nN\ge\overline{a}\ge n\nu,\qquad
  N\ge|a \widehat{x}|\ge\widehat{a}\ge\nu,\qquad
  a_{\ell m}a_{\ell m}\ge n\nu^{2},
\end{equation*}
so that
\begin{equation}\label{eq:nge4S1}
  \textstyle
  S(x)\le -\frac{n-1}{2}\nu\widehat{a}
  \frac{1}{R^{2}}\delta_{|x|=R}
  \qquad\text{for}\ |x|\le R
\end{equation}
while, recalling also assumption \eqref{eq:asscaseA},
we obtain 
\begin{equation}\label{eq:nge4S2}
  S(x)\le
  \textstyle
  (n-1)
  \left[\frac{n+3}{2}N-n\nu\right]\widehat{a}
  \frac{R^{n-1}}{|x|^{n+2}}
  \qquad\text{for}\ |x|\ge R.
\end{equation}

On the other hand, for Theorem \ref{the:2} with $n=3$,
writing $a(x)=I+q(x)$ i.e.
$q_{\ell m}:= a_{\ell m}-\delta_{\ell m}$ we have,
with the usual notations
$\widehat{q}=q_{\ell m}\widehat{x}_{\ell}\widehat{x}_{m}$ and
$\overline{q}=q_{\ell\ell}$,
\begin{equation*}
  a_{\ell m}a_{\ell m}=
  \delta_{\ell m}\delta_{\ell m}+
  2 \delta_{\ell m}q_{\ell m}
     +q_{\ell m}q_{\ell m}
  =
  3+2 \overline{q}+q_{\ell m}q_{\ell m}
\end{equation*}
 and also
\begin{equation*}
  \widehat{a}=1+\widehat{q},\qquad
  \overline{a}=3+\overline{q},\qquad
  |a \widehat{x}|^{2}=
  1+2 \widehat{q}+|q \widehat{x}|^{2}.
\end{equation*}
Note that $|q|=|a(x)-I|\le C_{I}\bra{x}^{-\delta}<1$
by assumption \eqref{eq:assperturb}, which implies
\begin{equation*}
  |\overline{q}|\le 3C_{I}\bra{x}^{-\delta},\qquad
  |\widehat{q}|\le C_{I}\bra{x}^{-\delta},\qquad
  |q \widehat{x}|\le C_{I}\bra{x}^{-\delta}
\end{equation*}
so that
\begin{equation*}
\begin{split}
  2a_{\ell m}a_{\ell m}+\overline{a}^{2}
  -6 \overline{a}\widehat{a}+15 \widehat{a}^{2}
  -12|a \widehat{x}|^{2}
  = &
  4 \overline{q}-12 \widehat{q}
  +2q_{\ell m}q_{\ell m}+\overline{q}^{2}
  -6 \overline{q}\widehat{q}
  +15 \widehat{q}^{2}
  -12|q \widehat{x}|^{2}
  \\
  \ge &
  4 \overline{q}-12 \widehat{q}
  -6 \overline{q}\widehat{q}
  -12|q \widehat{x}|^{2}
  \ge
  -46 C_{I}\bra{x}^{-\delta}.
\end{split}
\end{equation*}
We have also $1-C_{I}\le \widehat{a}\le 1+C_{I}$ so that
($n=3$)
\begin{equation*}
  \textstyle
  -\frac{n-1}{2}\widehat{a}^{2}\le 
  -(1-C_{I})^{2},
  \quad
  \textstyle
  \left(\frac{n+3}{2}\widehat{a}-\overline{a}\right)\widehat{a}
  \le 6  C_{I}(1+C_{I})
  < 12 C_{I}
\end{equation*}
Thus under the assumptions of Theorem \ref{the:2}
we obtain the estimates
\begin{equation}\label{eq:neq3S1}
  \textstyle
  S(x)\le -
  (1-C_{I})
  \frac{1}{R^{2}}\delta_{|x|=R}
  \quad\text{for}\ |x|\le R
\end{equation}
and
\begin{equation}\label{eq:neq3S2}
  \textstyle
  S(x)\le 
  24 C_{I}
  \left[
    \frac{R^{2}}{|x|^{5}}+
    \frac{1}{|x|^{3}\langle x\rangle^{\delta}}
  \right]
  \quad\text{for}\ |x|> R.
\end{equation}

We are now ready to estimate the integral
\begin{equation*}
  \textstyle
  -\int_{\Omega}A(A \psi+\phi)|v|^{2}dx
  \equiv 
  -\int_{\Omega}A^{2}\psi|v|^{2}dx=I+II
\end{equation*}
where
\begin{equation*}
  \textstyle
  I=-\int_{\Omega} S(x)|v|^{2}dx,\qquad
  II=-\int_{\Omega} R(x)|v|^{2}dx.
\end{equation*}
In Theorem \ref{the:1}
by \eqref{eq:estR} and \eqref{eq:stnorma1} we have immediately
for any $R>0$
\begin{equation}\label{eq:Rv2}
  \textstyle
  II
  \ge
  -
  24 n \delta^{-1} C_{a}(N+C_{a})
  \|v\|_{\dot X}^{2}.
\end{equation}
In Theorem \ref{the:2} we use a different estimate, which is
valid for $R>1$ only: by \eqref{eq:estR} and
\eqref{eq:stnorma11} we have
\begin{equation}\label{eq:Rv2n3}
  R>1 \quad\implies\quad
  II\ge
  -324\delta^{-1}
  C_{a}(N+C_{a})
  \left[
  \|v\|_{X}^{2}+\|\nabla^{b}v\|^{2}_{L^{2}(\Omega_{\le1})}
  \right].
\end{equation}

Concerning the $S(x)$ term, in Theorem \ref{the:1} we have
by \eqref{eq:nge4S1}, \eqref{eq:nge4S2} 
\begin{equation}\label{eq:Sv2}
  \textstyle
  I
  \ge
  \frac{n-1}{2}\nu
  \frac{1}{R^{2}}\int_{\Omega_{=R}}\widehat{a}|v|^{2}dS
  -
  \left[\frac{n+3}{2}N-n\nu\right]
  (n-1)
  \int_{\Omega_{\ge R}}
  \widehat{a}
  \frac{R^{n-1}}{|x|^{n+2}}
\end{equation}
for all $R>0$. If $\frac{n+3}{2}N\ge n\nu$, we can use
\eqref{eq:stnorma2} to get
\begin{equation*}
  \textstyle
  I\ge   \frac{n-1}{2}\nu
  \frac{1}{R^{2}}\int_{\Omega_{=R}}\widehat{a}|v|^{2}dS
  -
  \left[\frac{n+3}{2}N-n\nu\right]
  \|\widehat{a}^{1/2}v\|_{\dot X}^{2}
\end{equation*}
and taking the sup over $R>0$ we conclude
\begin{equation}\label{eq:estSA}
  \sup_{R>0}
  I\ge K_{0} \nu\|\widehat{a}^{1/2}v\|_{\dot X}^{2}
\end{equation}
provided we define $K_{0}$ as
\begin{equation}\label{eq:defK}
  \textstyle
  K_{0}:=
  \frac{n-1}{2}-\frac{n+3}{2}\frac{N}{\nu}+n>0;
\end{equation}
notice that the condition
$K_{0}>0$ is equivalent to assumption \eqref{eq:assratio}.
On the other hand, if $\frac{n+3}{2}N\le n\nu$
(which implies $K_{0}\le \frac{n-1}{2}$)
from \eqref{eq:Sv2} we have directly
\begin{equation*}
  \textstyle
  I\ge   \frac{n-1}{2}\nu
  \frac{1}{R^{2}}\int_{\Omega_{=R}}\widehat{a}|v|^{2}dS
  \ge K_{0}\nu
  \frac{1}{R^{2}}\int_{\Omega_{=R}}\widehat{a}|v|^{2}dS
\end{equation*}
and taking the sup over $R>0$ we obtain again
\eqref{eq:estSA}.

In Theorem \ref{the:2}, using \eqref{eq:stnorma2} and 
\eqref{eq:stnorma3} in \eqref{eq:neq3S1}, \eqref{eq:neq3S2},
we have for all $R>1$
\begin{equation}\label{eq:Sv2n3}
  \textstyle
  R>1 \quad\implies\quad
  I
  \ge
  (1-C_{I})
  \frac{1}{R^{2}}\int_{\Omega_{=R}}|v|^{2}dS
  -72 C_{I} \delta^{-1}
  \|v\|^{2}_{X}.
\end{equation}

It remains to consider the second term in \eqref{eq:termsv2};
in Theorem \ref{the:1} we have
\begin{equation}\label{eq:repuls}
  \textstyle
  -a(\nabla \psi,\nabla c)|v|^{2}=
  -a(\widehat{x},\nabla c)\psi'|v|^{2}
  \ge 
  -\frac{C_{c}}{|x|^{2}\langle x\rangle^{1+\delta}}
  \psi'|v|^{2}
\end{equation}
thanks to assumption \eqref{eq:assac}.
Since $0<\psi'<1/2$, by estimate \eqref{eq:stnorma1} we obtain
\begin{equation}\label{eq:estc}
  \textstyle
  -\int_{\Omega}a(\nabla \psi,\nabla c)|v|^{2}
  \ge 
  -C_{c}\delta^{-1}\|v\|_{\dot X}^{2}
\end{equation}
and taking into account \eqref{eq:estSA}, \eqref{eq:Rv2}
and the inequality $\widehat{a}\ge\nu$, we obtain
\begin{equation}\label{eq:estIv2}
  \textstyle
  \sup_{R>0}\int_{\Omega}I_{|v|^{2}}dx\ge
  \left[
  K_{0}\nu^{2}-(24 n C_{a}(N+C_{a})+C_{c})\delta^{-1}
  \right]
  \|v\|_{\dot X}^{2}.
\end{equation}
In Theorem \ref{the:2} we use the stronger assumption
\eqref{eq:assacstrong}
and \eqref{eq:stnorma11}, to obtain
\begin{equation}\label{eq:estcbis}
  \textstyle
  -\int_{\Omega}a(\nabla \psi,\nabla c)|v|^{2}
  \ge -8 \delta^{-1} C_{c}\|v\|_{X}^{2}
  -
  9 C_{c}\|\nabla^{b}v\|^{2}_{L^{2}(\Omega_{\le1})}.
\end{equation}
Putting together\eqref{eq:Rv2n3}, \eqref{eq:Sv2n3}
and \eqref{eq:estcbis} we obtain
\begin{equation*}%
\begin{split}
  \textstyle
  \int_{\Omega}I_{|v|^{2}}dx\ge
  &
  \textstyle
  (1-C_{I})
  \frac{1}{R^{2}}\int_{\Omega_{=R}}|v|^{2}dS
  -(72C_{I}\delta^{-1} +8 \delta^{-1}C_{c})\|v\|_{X}^{2}
  -9 C_{c}\|\nabla^{b}v\|^{2}_{L^{2}(\Omega_{\le1})}
  \\
    &
  -324 \delta^{-1}
  C_{a}(N+C_{a})
  \left[
  \|v\|_{X}^{2}+\|\nabla^{b}v\|^{2}_{L^{2}(\Omega_{\le1})}
  \right]
\end{split}
\end{equation*}
whence, noticing that 
$\|w\|_{L^{2}(\Omega_{\le1})}\le \sqrt{2}\|w\|_{Y}$,
we have for all $R>1$
\begin{equation}\label{eq:estIv2bis}
\begin{split}
  \textstyle
  \int_{\Omega}I_{|v|^{2}}dx\ge
  (1-C_{I})
  \frac{1}{R^{2}}\int_{\Omega_{=R}}|v|^{2}dS
  &
  -8 \delta^{-1}
  [C_{c}+9C_{I}+41 C_{a}(N+C_{a})]
  \|v\|^{2}_{X}
  \\
  &
  -13 \delta^{-1}
  [C_{c}+36 C_{a}(N+C_{a})]
  \|\nabla^{b}v\|^{2}_{Y}.
\end{split}
\end{equation}

\subsection{Estimate of the terms in \texorpdfstring{$|\nabla^{b}v|^{2}$}{}}
\label{sub:nablav}

We consider now the terms in \eqref{eq:morid} which are quadratic
in $\nabla^{b} v$: since $\phi=0$ they reduce to
\begin{equation*}
  I_{|\nabla v|^{2}}=
  \alpha_{\ell m}\cdot
  \Re(\partial^{b}_{\ell}v\overline{\partial^{b}_{m}v})
\end{equation*}
with
\begin{equation*}
  \alpha_{\ell m}=2a_{jm}\partial_{j}(a_{\ell k}
       \partial_{k}\psi)
        -a_{jk}a_{\ell m;j}\partial_{k}\psi .
\end{equation*}
We split the coefficient as
\begin{equation*}
  \alpha_{\ell m}=
  s_{\ell m}(x)+r_{\ell m}(x)
\end{equation*}
where the remainder $r_{\ell m}$ gathers the terms containing
derivatives of the $a_{jk}$.
Since the weight $\psi$ is radial we have
\begin{equation*}
  \textstyle
  s_{\ell m}(x)=
  2 a_{jm} a_{\ell k} \widehat{x}_{j}\widehat{x}_{k}
  \left(
    \psi''-\frac{\psi'}{|x|}
  \right)
  +
  2 a_{jm}a_{j\ell}
  \frac{\psi'}{|x|}
\end{equation*}
while
\begin{equation*}
  r_{\ell m}(x)=
  [2a_{jm}a_{\ell k;j}-a_{jk}a_{\ell m;j}]\widehat{x}_{k}\psi'.
\end{equation*}
We estimate directly
\begin{equation*}%
  |r_{\ell m}(x)
  \Re(\partial^{b}_{\ell}v\overline{\partial^{b}_{m}v})
  |\le 
  3|a||a'||\nabla^{b}v|^{2}
\end{equation*}
and by assumption \eqref{eq:assader}
we obtain
\begin{equation*}
  |r_{\ell m}(x)
  \Re(\partial^{b}_{\ell}v\overline{\partial^{b}_{m}v})
  |\le 3N C_{a}\langle x\rangle^{-1-\delta}|\nabla^{b}v|^{2}.
\end{equation*}
Then integration on $\Omega$ gives,
using \eqref{eq:stnorma4},
\begin{equation}\label{eq:estRD}
  \textstyle
  \int_{\Omega}
  |r_{\ell m}
  \Re(\partial^{b}_{\ell}v\overline{\partial^{b}_{m}v})|dx
  \le
  24N C_{a}\delta^{-1}\|\nabla^{b}v\|^{2}_{Y}.
\end{equation}
Concerning $s_{\ell m}$, 
in the region $|x|>R$ we have
\begin{equation*}
  \textstyle
  s_{\ell m}(x)=
  [a_{jm}a_{j\ell}- a_{jm} a_{\ell k} \widehat{x}_{j}
  \widehat{x}_{k}]
    \frac{1}{|x|}
  +\frac{R^{n-1}}{|x|^{n}}
  a_{jm} a_{\ell k} \widehat{x}_{j}\widehat{x}_{k}
  -
  a_{jm}a_{j\ell}\frac{R^{n-1}}{n|x|^{n}}
\end{equation*}
so that, in the sense of positivity of matrices,
\begin{equation*}
  \textstyle
  s_{\ell m}(x)\ge
  [a_{jm}a_{j\ell}- a_{jm} a_{\ell k} \widehat{x}_{j}
  \widehat{x}_{k}]
    \frac{n-1}{n|x|}\ge0
    \quad\text{for $|x|>R$}
\end{equation*}
(indeed, one has
$a_{jm}a_{j\ell}\ge a_{jm}a_{\ell k}\widehat{x}_{j}
\widehat{x}_{k}$
as matrices); on the other hand,
in the region $|x|\le R$ we have
\begin{equation*}
  \textstyle
  s_{\ell m}(x)=
  a_{jm}a_{j\ell}
    \frac{n-1}{nR}
  \quad\text{for $|x|\le R$}.
\end{equation*}
Thus, by the assumption $a(x)\ge\nu I$, one has for all $x$
\begin{equation}\label{eq:estder}
  \textstyle
  s_{\ell m}(x)
  \Re(\partial^{b}_{\ell}v\overline{\partial^{b}_{m}v})
  \ge
  \frac{n-1}{nR}
  \nu^{2}
  \one{|x|\le R}(x)
  |\nabla^{b}v|^{2}.
\end{equation}
Integrating on $\Omega$ and recalling \eqref{eq:estRD} we obtain
\begin{equation}\label{eq:estInablav}
  \textstyle
  \int_{\Omega}
  I_{|\nabla v|^{2}}
  dx
  \ge
  \frac{n-1}{nR}\nu^{2}
  \int_{\Omega_{\le R}}
  |\nabla^{b}v|^{2}dx
  -
  24NC_{a}\delta^{-1}\|\nabla^{b}v\|^{2}_{Y}.
\end{equation}

\subsection{Estimate of the magnetic terms}
\label{sub:estimate_magnetic}

Consider the term
\begin{equation*}
  I_{b}:=
  2\Im[a_{jk}\partial^{b}_{k}v
     (\partial_{j}b_{\ell}-\partial_{\ell}b_{j})
     a_{\ell m}\partial_{m}\psi\ \overline{v}]
  =
  2\Im 
  \left[
    (db \cdot a \widehat{x}) \cdot (a \nabla^{b}v)
    \overline{v}\psi'
  \right]
\end{equation*}
where the identity holds for any radial $\psi$, while 
$db$ is the matrix 
\begin{equation*}
  db=[\partial_{j}b_{\ell}-\partial_{\ell}b_{j}]_{j,\ell=1}^{n}.
\end{equation*}
Since $0\le \psi'\le 1/2$, $|a(x)|\le N$, we have
\begin{equation*}
  \textstyle
  |I_{b}(x)|\le 
  2N^{2}|db(x)|\cdot|\nabla^{b}v||v|\psi'
  \le
  N^{2}|db(x)|\cdot|\nabla^{b}v||v|
\end{equation*}
and by assumption \eqref{eq:assdb}
we get
\begin{equation}\label{eq:estIb}
  \textstyle
  \int_{\Omega}
  |I_{b}|\le 
   N^{2}C_{b}\int_{\Omega}
   \frac{|\nabla^{b}v||v| }{{|x|^{2+\delta}+|x|^{2-\delta}}}
   \le
   5 \delta^{-1}N^{2}C_{b}
   (\|\nabla^{b}v\|_{\dot Y}^{2}+\|v\|_{\dot X}^{2})
\end{equation}
using \eqref{eq:stnorma10}. Under the stronger
assumption \eqref{eq:assdbstrong}
we have instead, using \eqref{eq:stnorma13}
\begin{equation}\label{eq:estIbbis}
  \textstyle
  \int_{\Omega}
  |I_{b}|\le 
   N^{2}C_{b}\int_{\Omega}
   \frac{|\nabla^{b}v||v| }{{|x|^{2+\delta}+|x|}}
   \le
   9 \delta^{-1}N^{2}C_{b}
   (\|\nabla^{b}v\|_{ Y}^{2}+\|v\|_{ X}^{2}).
\end{equation}

\subsection{Estimate of the terms containing \texorpdfstring{$f$}{}}
\label{sub:estimate_of_the_terms_in_f_}

Consider now the terms
\begin{equation*}%
  I_{f}:=
  \Re (A\psi+\phi)\overline{v}f+
  2\Re a(\nabla\psi,\nabla^bv)f
\end{equation*}
with $\phi \equiv0$.
We have easily
\begin{equation*}
  \textstyle
  \psi'',\frac{\psi'}{|x|}\le \frac{1}{2(R\vee|x|)}
  \quad\implies\quad
  0\le \widehat{a}\psi''+\frac{\overline{a}-\widehat{a}}{|x|}
  \psi'
  \le \frac{\overline{a}}{2(R\vee|x|)}
  \le \frac{Nn}{2(R\vee|x|)}
\end{equation*}
and recalling \eqref{eq:Apsiphi} we get
\begin{equation*}
  \textstyle
  |A \psi|\le
  \frac{Nn}{2(R\vee|x|)}+\frac{1}2 |a'|\le
  \frac{Nn}{2(R\vee|x|)}+
  \frac{C_{a}}{2\langle x\rangle^{1+\delta}}
\end{equation*}
by $\psi'\le1/2$ and assumption \eqref{eq:assader}. Thus by
\eqref{eq:stnorma8} we can write, for all $R>0$,
\begin{equation*}
  \textstyle
  \int_{\Omega}|(A \psi) \overline{v}f|
  \le
  \frac 12(Nn+C_{a})\|f\|_{\dot Y^{*}}\||x|^{-1}v\|_{\dot Y}
  \le
  \frac 12(Nn+C_{a})\|f\|_{\dot Y^{*}}\|v\|_{\dot X}
\end{equation*}
while, using instead the second estimate in \eqref{eq:stnorma8},
we can write for all $R>1$
\begin{equation*}
  \textstyle
  \int_{\Omega}|(A \psi) \overline{v}f|
  \le
  (Nn+C_{a})\|f\|_{Y^{*}}\|\bra{x}^{-1}v\|_{Y}
  \le
  (Nn+C_{a})\|f\|_{Y^{*}}\|v\|_{X}.
\end{equation*}
For the second term in $I_{f}$ we have simply
\begin{equation*}
  \textstyle
  |2a(\nabla \psi,\nabla^{b}v)f|\le
  N|\nabla^{b}v||f|
\end{equation*}
and summing up, for any $0<\zeta_{1}\le1$, we obtain
\begin{equation}\label{eq:estIf}
  \textstyle
  R>0 \quad\implies\quad
  \int_{\Omega}|I_{f}|
  \le
  (Nn+C_{a})^{2}\zeta_{1}^{-1}\|f\|_{\dot Y^{*}}^{2}
  +\zeta_{1}[\|v\|_{\dot X}^{2}+\|\nabla^{b}v\|_{\dot Y}^{2}]
\end{equation}
and also
\begin{equation}\label{eq:estIfbis}
  \textstyle
  R>1 \quad\implies\quad
  \int_{\Omega}|I_{f}|
  \le
  (Nn+C_{a})^{2}\zeta_{1}^{-1}\|f\|_{ Y^{*}}^{2}
  +\zeta_{1}[\|v\|_{ X}^{2}+\|\nabla^{b}v\|_{ Y}^{2}].
\end{equation}

\subsection{Estimate of the boundary terms}
\label{sub:boundary_terms}

It remains to consider the term in divergence form 
$\partial_{j}\Re Q_{j}$ appearing in identity 
\eqref{eq:morid}, where
\begin{equation*}
  Q_{j}=
    a_{jk}\nabla^{b}_{k}v \cdot 
    [A^{b},\psi]\overline{v}
    -\frac12 a_{jk}(\partial_{k}A \psi)|v|^{2}
    -a_{jk}\partial_{k}\psi 
    \left[(c-\lambda)|v|^{2}+a(\nabla^{b}v,\nabla^{b}v)\right].
\end{equation*}
Note that
\begin{equation*}
  [A^{b},\psi]\overline{v}=
  (A \psi)\overline{v}+2a(\nabla \psi,\nabla^{b}v).
\end{equation*}
As before we integrate over the set $\Omega\cap \{|x|\le R\}$
and then let $R\to+\infty$. The integral over $|x|=R$ tends
to zero, and we are left to consider the integral over 
$\partial \Omega$.
After canceling several terms due to the Dirichlet 
boundary condition,
and noticing that $\nabla^{b}v=\nabla v+ibv=\nabla v$ on 
$\partial \Omega$, we are left with
\begin{equation}
  \textstyle
  \int_{\Omega}\partial_{j}\Re Q_{j}
  =
  \Re
  \int_{\partial \Omega}
  \left[
    2a(\nabla v,\vec{\nu})\cdot
    a(\widehat{x},\nabla v)
    -
    a(\nabla v,\nabla v)\cdot
    a(\widehat{x},\vec{\nu})
  \right]\psi'
  dS
\end{equation}
where $\vec{\nu}$ is the exterior unit normal to 
$\partial \Omega$.
Dirichled boundary conditions imply that
$\nabla v$ is normal to $\partial \Omega$ so that
\begin{equation*}
  \nabla v=(\vec{\nu}\cdot \nabla v)\vec{\nu}
\end{equation*}
and hence
\begin{equation*}
  a(\nabla v,\vec{\nu})=(\vec{\nu}\cdot \nabla v)
  a(\vec{\nu},\vec{\nu}),
  \quad
  a(\widehat{x},\nabla v)=
  (\vec{\nu}\cdot \nabla \overline{v})a(\widehat{x},\vec{\nu}),
  \quad
  a(\nabla v,\nabla v)=
  |\vec{\nu}\cdot \nabla \overline{v}|^{2}a(\vec{\nu},\vec{\nu})
\end{equation*}
and
\begin{equation*}%
  \textstyle
  \int_{\Omega}\partial_{j}\Re Q_{j}
  =
  \int_{\partial \Omega}
  |\vec{\nu}\cdot \nabla \overline{v}|^{2}
  a(\vec{\nu},\vec{\nu})
  a(\widehat{x},\vec{\nu})\psi'
  dS.
\end{equation*}
In particular when the obstacle 
$\mathbb{R}^{n}\setminus\Omega$ satisfies
\eqref{eq:assbdry} we have
\begin{equation}\label{eq:estbdry}
  \textstyle
  \int_{\Omega}\partial_{j}\Re Q_{j}
  \le0.
\end{equation}

\subsection{Conclusion of the proof}\label{sub:conclusion}

We now integrate the identity \eqref{eq:morid} on $\Omega$;
by \eqref{eq:estbdry} we obtain
\begin{equation}\label{eq:structest}
  \textstyle
  \int_{\Omega}(I_{|v|^{2}}+I_{|\nabla v|^{2}})\le
  \int_{\Omega}(|I_{\epsilon}|+|I_{b}|+|I_{f}|).
\end{equation}

\subsubsection{Proof of Theorem \ref{the:1}}\label{sub:thm1}
We use estimates 
\eqref{eq:estIep}, \eqref{eq:estIv2},
\eqref{eq:estInablav}, \eqref{eq:estIb},
\eqref{eq:estIf}, and take the sup over $R>0$.
Note in particular that by \eqref{eq:estInablav} we have
\begin{equation*}%
  \textstyle
    \sup_{R>0}\int_{\Omega}I_{|\nabla v|^{2}}dx\ge
    \left[
      \frac{n-1}{n}\nu^{2}-24nNC_{a}\delta^{-1}
    \right]
    \|\nabla ^{b}v\|^{2}_{\dot Y}.
\end{equation*}
Thus we obtain
\begin{equation*}
  M_{1}\|v\|_{\dot X}^{2}+
  M_{2}\|\nabla^{b}v\|_{\dot Y}^{2}
  \le
  M_{3}\|f\|_{\dot Y^{*}}^{2}
\end{equation*}
where
\begin{equation*}
  M_{1}=
  K_{0}\nu^{2}-24nC_{a}(N+C_{a})\delta^{-1}
  -C_{c}\delta^{-1}
  -18 \delta^{-1}N(N+2)C_{-}
  -5 \delta^{-1}N^{2}C_{b}
  -\zeta-\zeta_{1}
\end{equation*}
\begin{equation*}
  \textstyle
  M_{2}=
  \frac{n-1}{n}\nu^{2}-24NC_{a}\delta^{-1}
  -18 \delta^{-1}NC_{-}
  -5 \delta^{-1}N^{2}C_{b}
  -\zeta-\zeta_{1}
\end{equation*}
\begin{equation*}
  M_{3}=
  8(N+2)^{2}(C_{-}^{2}+C_{+}^{2}+Nn+n^{2}C_{a}+1)\zeta^{-1}
  +(Nn+C_{a})^{2}\zeta_{1}^{-1}.
\end{equation*}
Recall that
\begin{equation*}
  \textstyle
  K_{0}:=
    \frac{3n-1}{2}-\frac{n+3}{2}\frac{N}{\nu}>0
    \quad\implies\quad N\le 3\nu
\end{equation*}
while $\zeta,\zeta_{1}\in(0,1]$ are arbitrary.
We define
\begin{equation*}
  \textstyle
  K:=\min\left\{1,\frac{\nu^{2}}{9},\frac{\nu^{2}}{9}K_{0}\right\},
\end{equation*}
we impose the conditions
\begin{equation*}
  24nC_{a}(N+C_{a})\delta^{-1},\quad
  C_{c}\delta^{-1},\quad
  18 \delta^{-1}N(N+2)C_{-},\quad
  5 \delta^{-1}N^{2}C_{b} \quad
  \ \text{are}\  
  \le
  K
\end{equation*}
and we choose $\zeta=\zeta_{1}=K$. Then we get
\begin{equation*}
  \textstyle
  \|v\|_{\dot X}^{2}+\|\nabla^{b}v\|_{\dot Y}^{2}\le
  K^{-1}M_{3}\|f\|_{\dot Y^{*}}^{2}
\end{equation*}
It is easily checked that with our choices
$C_{-}\le\nu$, $C_{a}\le\nu$, $N\le3\nu$
so that
\begin{equation*}
  \textstyle
  M_{3}\le 64n^{2} K^{-1}(\nu+1)^{2} (C_{+}+\nu+1)^{2}
\end{equation*}
and finally
\begin{equation}\label{eq:almost}
  \textstyle
  \|v\|_{\dot X}^{2}+\|\nabla^{b}v\|_{\dot Y}^{2}\le
  64n^{2}K^{-2}
  (\nu+1)^{2} (C_{+}+\nu+1)^{2}\|f\|_{\dot Y^{*}}^{2}
\end{equation}
while the previous conditions on the coefficients
are implied by assumptions \eqref{eq:smallC}.

On the other hand, estimates \eqref{eq:lambdapos},
\eqref{eq:lambdaneg} with the previous choices give
\begin{equation*}
  \lambda_{+}\|v\|_{\dot Y}^{2}\le
  6\nu\|\nabla^{b}v\|_{\dot Y}^{2}
  +4(C_{+}^{2}+3\nu(\nu+1)+n^{2}\nu+1)\|v\|_{\dot X}^{2}
  +\|f\|_{\dot Y^{*}}^{2},
\end{equation*}
\begin{equation*}
  \lambda_{-}\|v\|_{\dot Y}^{2}\le
  2(\nu^{2}+3\nu+n^{2}\nu+1)\|v\|_{\dot X}^{2}
  +\|f\|_{\dot Y^{*}}^{2}
\end{equation*}
while estimate \eqref{eq:epsest} gives
\begin{equation*}
  |\epsilon|\|v\|_{\dot Y}^{2}\le
  3(3\nu+1)
  \left[
    \|v\|_{\dot X}^{2}+\|\nabla^{b}v\|_{\dot Y}^{2}
    +\|f\|_{\dot Y^{*}}^{2}
  \right].
\end{equation*}
Summing up, and using \eqref{eq:almost}, we 
obtain \eqref{eq:thesisA2}.

\subsubsection{Proof of Theorem \ref{the:2}}\label{sub:thm2}
We use estimates 
\eqref{eq:estIepbis}, \eqref{eq:estIv2bis},
\eqref{eq:estInablav}, \eqref{eq:estIbbis},
\eqref{eq:estIfbis} and take the sup over $R>1$.
In particular we have, by \eqref{eq:estInablav},
\begin{equation*}%
  \textstyle
    \sup_{R>1}\int_{\Omega}I_{|\nabla v|^{2}}dx\ge
    \left[
      \frac{n-1}{n}\nu^{2}-24 NC_{a}\delta^{-1}
    \right]
    \|\nabla ^{b}v\|^{2}_{Y}.
\end{equation*}
Note also that we can take $\nu=1-C_{I}$ 
and $N=1+C_{I}$ by assumption
\eqref{eq:assperturb}, while $n=3$. We obtain
\begin{equation*}
  \textstyle
  (1-C_{I})\sup_{R>1}\frac{1}{R^{2}}\int_{\Omega_{=R}}|v|^{2}dS
  -m_{1}\|v\|^{2}_{X}
  +M_{2}\|\nabla^{b}v\|^{2}_{Y}
  \le
  M_{3}\|f\|^{2}_{Y^{*}}
\end{equation*}
where
\begin{equation*}
  m_{1}=
  8 \delta^{-1}[41C_{a}(N+C_{a})+9C_{I}+C_{c}]
  +18 \delta^{-1}N(N+2)C_{-}
  +9 \delta^{-1}N^{2}C_{b}
  +\zeta+\zeta_{1}
\end{equation*}
\begin{equation*}
  \textstyle
  M_{2}=
  \frac23(1-C_{I})^{2}-24 NC_{a}\delta^{-1}
  -13 \delta^{-1}[36 C_{a}(N+C_{a})+C_{c}]
  -18 \delta^{-1}N(N+2)C_{-}
  -9 \delta^{-1}N^{2}C_{b}
  -\zeta-\zeta_{1}
\end{equation*}
\begin{equation*}
  M_{3}=
  44(C_{-}^{2}+4C_{+}^{2}+3N+9C_{a}+1)(N+2)^{2}\zeta^{-1}
  +(3N+C_{a})^{2}\zeta^{-1}_{1}.
\end{equation*}
We impose that $C_{I}\le1/100$ so that we can take
$N=101/100$, $\nu=99/100$.
If we choose $\zeta=\zeta_{1}=1/200$ and
impose that the quantities
\begin{equation*}
  13 \delta^{-1} \cdot 36 C_{a}(N+C_{a}),\quad
  72 \delta^{-1}C_{I},\quad
  13 \delta^{-1}C_{c},\quad
  18 \delta^{-1}N(N+2) C_{-},\quad
  9 \delta^{-1}N^{2}C_{b}
\end{equation*}
are $\le 1/100$, we see that 
$24 NC_{a}\delta^{-1}\le6 \cdot10^{-5}$,
$8 \delta^{-1}\cdot41C_{a}(N+C_{a})\le8 \cdot10^{-3}$,
$C_{a}\le 3 \cdot10^{-5}$,
$C_{-}^{2}\le 2 \cdot 10^{-4}$,
$8 \delta^{-1}C_{c}\le 7 \cdot 10^{-3}$
too. With these choices, we obtain easily
\begin{equation*}
  \textstyle
  \frac{99}{100}
  \sup_{R>1}\frac{1}{R^{2}}\int_{\Omega_{=R}}|v|^{2}dS
  -\frac{46}{1000}\|v\|^{2}_{X}
  +\frac{60}{100}\|\nabla^{b}v\|^{2}_{Y}
  \le
  38000(C_{+}^{2}+1)\|f\|^{2}_{Y^{*}}
\end{equation*}
and using \eqref{eq:stnorma7} we arrive at
\begin{equation}\label{eq:almostbis}
  \|v\|_{X}^{2}+\|\nabla^{b}v\|_{Y}^{2}
  \le
  5 \cdot10^{8}\|f\|_{Y^{*}}^{2}.
\end{equation}
It is straightforward to check that
the previous conditions are (generously)
implied by the smallness assumptions \eqref{eq:smallCstrong}.

We recall also estimates \eqref{eq:lambdaposbis} and
\eqref{eq:lambdanegbis}, which
with the previous choice of constants imply
\begin{equation*}
  \lambda_{+}\|v\|_{Y}^{2}\le
  18(C_{+}^{2}+1)
  (\|\nabla^{b}v\|_{Y}^{2} +\|v\|_{X}^{2})
  +\|f\|_{Y^{*}},
\end{equation*}
\begin{equation*}
  \lambda_{-}\|v\|_{Y}^{2}\le
  10^{-4}\|\nabla^{b} v\|_{Y}^{2}+7\|v\|_{X}^{2}+\|f\|^{2}_{Y^{*}},
\end{equation*}
while estimate \eqref{eq:epsestbis} implies now
\begin{equation*}
  |\epsilon|\|v\|_{Y}^{2}\le
  5\|v\|_{X}^{2}+5\|\nabla^{b}v\|_{Y}^{2}+\|f\|_{Y^{*}}^{2}.
\end{equation*}
Taking into account \eqref{eq:almostbis}, we finally obtain
\eqref{eq:thesisB2}.

\section{Proof of the Corollaries}\label{sec:proof_coroll}

The proof of Corollary \ref{cor:smooschr} is standard.
Denote by $R(z)=(L-z)^{-1}$ the resolvent operator of $L$,
then estimates \eqref{eq:thesisA} and \eqref{eq:thesisB}
imply
\begin{equation*}
  \|\bra{x}^{-\frac32-} R(z)v\|_{L^{2}(\Omega)}+
  \|\bra{x}^{-\frac12-}\nabla R(z)v\|_{L^{2}(\Omega)}
  \lesssim
  \|\bra{x}^{\frac12+}v\|_{L^{2}(\Omega)}
\end{equation*}
where we used estimates \eqref{eq:stnorma1}, \eqref{eq:stnorma3}
and \eqref{eq:stnorma4} from Section \ref{sec:norms}.
From the first term by complex interpolation we obtain
\begin{equation*}
  \|\bra{x}^{-1-} R(z)\bra{x}^{-1-}v\|_{L^{2}(\Omega)}
  \lesssim
  \|v\|_{L^{2}(\Omega)}
\end{equation*}
which means that the operator $\bra{x}^{1+}$ is
$L$-supersmooth; on the other term, from the second term
and the elliptic estimate \eqref{eq:ellest} we have
\begin{equation*}
\begin{split}
  \|\bra{x}^{-\frac12-}L^{\frac14}
    R(z)L^{\frac14}v\| _{L^{2}(\Omega)}
  =
  \|\bra{x}^{-\frac12-}L^{\frac12}
    R(z)v\| _{L^{2}(\Omega)}
  &\lesssim
  \|\bra{x}^{-\frac12-}\nabla
    R(z)v\| _{L^{2}(\Omega)}
  \\
  &\lesssim
  \|\bra{x}^{\frac12+}v\|_{L^{2}(\Omega)}
\end{split}
\end{equation*}
and this implies that $\bra{x}^{-\frac12-}L^{\frac14}$
is also $L$-supersmooth. From the standard Kato theory
(see the details in \cite{DAncona14-a}) we obtain
\eqref{eq:smooschr}, \eqref{eq:smooschrnh1}, 
\eqref{eq:smooschrnh2}.

Also the proof of \eqref{eq:ragesch} is standard: we can
write for any $f\in D(L)$ and any test function $g$
(scalar product and norm of $L^{2}(\Omega)$)
\begin{equation*}
  \textstyle
  (e^{i(t+1)L} f,g)=
  (\int_{t}^{t+1}\partial_{s}[(s-t)e^{isL}f]ds,g)=
  i\int_{t}^{t+1}((s-t)Le^{isL}f,g)ds
     -\int_{t}^{t+1}(e^{isL}f,g)ds
\end{equation*}
which implies by Cauchy-Schwartz
\begin{equation*}
  \textstyle
  |(e^{i(t+1)L} f,g)|\le
  \|\bra{x}^{\frac12+}g\|
  \left(\int_{t}^{\infty}
      \|\bra{x}^{-\frac12-}e^{itL}Lf\|^{2}
      +\|\bra{x}^{-\frac12-}e^{itL}f\|^{2}
      ds\right)^{\frac12}.
\end{equation*}
Using \eqref{eq:smooschr} we obtain
\begin{equation*}
  \textstyle
  \int_{0}^{\infty}
  \|\bra{x}^{-\frac12-}e^{itL}Lf\|^{2}ds
  \le
  \|L^{\frac34}f\|^{2}
\end{equation*}
and this implies that for every $\epsilon>0$ we can find
$T_{\epsilon}$ such that 
\begin{equation*}
  \textstyle
  \int_{t}^{\infty}
  \|\bra{x}^{-\frac12-}e^{itL}Lf\|^{2}ds
  \le
  \epsilon
  \quad\text{for}\quad t>T_{\epsilon}.
\end{equation*}
The second term can be handled in a similar way. Thus we have
proved that for any $f\in D(L)$, any test function $g$
and any $\epsilon>0$
\begin{equation*}
  |(e^{itL} f,g)|\le
  \|\bra{x}^{\frac12+}g\| \cdot \epsilon
  \quad\text{for}\quad t>T_{\epsilon}+1
\end{equation*}
which implies
\begin{equation*}
  \|\bra{x}^{-\frac12-}e^{itL}f\|\le\epsilon
  \quad\text{for}\quad t>T_{\epsilon}+1.
\end{equation*}
By density (note that $L$ has no eigenvalues) we 
obtain \eqref{eq:ragesch}.

We recall if $L$ is a selfadjoint
operator on a Hilbert space $\mathcal{H}$ and
$A:\mathcal{H}\to \mathcal{H}_{1}$
is a closed operator to a second Hilbert space
$\mathcal{H}_{1}$, $A$ is called
$L$-\emph{supersmooth} if the following estimate
holds for all $z\in \mathbb{C}\setminus \mathbb{R}$
\begin{equation*}
  \|A(z-L)^{-1}A^{*}v\|_{\mathcal{H}_{1}}\le C\|v\|_{\mathcal{H}}
\end{equation*}
with a constant uniform in $z$. From this estimate one obtains
the following estimates for the Schr\"{o}dinger flow
associated to $L$:
\begin{equation*}
  \|Ae^{-itL}v\|_{L^{2}_{t}\mathcal{H}_{1}}\le
  C'\|v\|_{\mathcal{H}},
  \qquad
  \textstyle
  \|\int_{0}^{t}Ae^{i(t-s)L}A^{*}h(s)ds\|
      _{L^{2}_{t}\mathcal{H}_{1}}\le
  C'\|h\|_{L^{2}_{t}\mathcal{H}_{1}}.
\end{equation*}
The proof of Corollary \ref{cor:smoowe} is immediate
using the following result
which is a special case of Corollary 2.5
in \cite{DAncona14-a}:

\begin{proposition}
  Let $L\ge0$ be a selfadjoint operator on the
  Hilbert space $\mathcal{H}$ and let $P$
  be the orthogonal projection onto $\ker(L)^{\perp}$.
  Assume $A$ and $AL^{-\frac14}P$ are closed operators with
  dense domain from $\mathcal{H}$ to a second
  Hilbert space $\mathcal{H}_{1}$.
  If $A$ is $L$-supersmooth then
  $AL^{-\frac14}P$ is $\sqrt{L}$-supersmooth.
\end{proposition}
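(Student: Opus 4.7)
The plan is to invoke the Kato--Yajima characterisation of supersmoothness in terms of the spectral density of $L$, and then exploit the substitution $\lambda=\mu^{2}$ to relate the spectral measures of $L$ and $\sqrt{L}$. Concretely, by Stone's formula and the well-known equivalence (up to a factor of $\pi$) of the operator-norm form $\|A(L-z)^{-1}A^{*}v\|\le C\|v\|$ with the imaginary-part form, $A$ is $L$-supersmooth iff the scalar measure
\[
  d\nu_{v}(\lambda):=\bra{dE_{\lambda}(L)\,A^{*}v,A^{*}v}_{\mathcal H}
\]
on $[0,\infty)$ is absolutely continuous with density $\rho_{v}(\lambda)\le (C/\pi)\|v\|^{2}_{\mathcal H_{1}}$, uniformly in $v\in\mathcal H_{1}$.

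For $w\in\mathbb C\setminus\mathbb R$ I would then unfold the target quadratic form. Since $P$, $L^{-1/4}P$ and $(\sqrt{L}-w)^{-1}$ are all Borel functions of $L$ and hence mutually commute, writing $(AL^{-1/4}P)^{*}=PL^{-1/4}A^{*}$ and applying the spectral theorem gives
\[
  \bra{(AL^{-1/4}P)(\sqrt{L}-w)^{-1}(AL^{-1/4}P)^{*}v,v}
  =\int_{0^{+}}^{\infty}\frac{\lambda^{-1/2}}{\sqrt{\lambda}-w}\,d\nu_{v}(\lambda).
\]
The factor $\lambda^{-1/2}$ comes from the two copies of $L^{-1/4}$, and the projection $P$ cuts off the potential atom of $\nu_{v}$ at $\lambda=0$, so the integrand is $\nu_{v}$-integrable.

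The key step is the substitution $\mu=\sqrt{\lambda}$, so that $d\lambda=2\mu\,d\mu$ and $\lambda^{-1/2}\,d\lambda=2\,d\mu$; combining with the absolute continuity from the first step we obtain
\[
  \bra{(AL^{-1/4}P)(\sqrt{L}-w)^{-1}(AL^{-1/4}P)^{*}v,v}
  =\int_{0}^{\infty}\frac{2\,\rho_{v}(\mu^{2})}{\mu-w}\,d\mu.
\]
Taking the imaginary part with $w=w_{0}+i\epsilon$, $\epsilon>0$, produces the Poisson kernel against the \emph{bounded} density $\rho_{v}(\mu^{2})$, and a standard majorisation yields
\[
  \bigl|\Im\bra{(AL^{-1/4}P)(\sqrt{L}-w)^{-1}(AL^{-1/4}P)^{*}v,v}\bigr|
  \le 2C\,\|v\|^{2}_{\mathcal H_{1}}
\]
uniformly in $w\in\mathbb C\setminus\mathbb R$; invoking the Kato criterion this time with $\sqrt{L}$ in place of $L$, this is exactly the $\sqrt{L}$-supersmoothness of $AL^{-1/4}P$.

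The main obstacle, in my view, is the clean passage between the operator-norm and imaginary-part versions of supersmoothness at both ends of the argument: the inequality $|\Im\bra{\cdot}|\le \|\cdot\|$ is free, but the reverse direction relies on the positivity of $\Im (L-z)^{-1}/\Im z$ and a polarisation argument, so one must either cite the standard $T^{*}T$ equivalence or reproduce it in passing. Secondarily, the closedness and dense-domain hypothesis on $AL^{-1/4}P$ must be used to extend the a-priori estimate from a dense set of $v\in\mathcal H_{1}$ (where all spectral integrals above are absolutely convergent) to all of $\mathcal H_{1}$, and the projection $P$ onto $\ker(L)^{\perp}$ must be retained throughout so that the singular factor $\lambda^{-1/2}$ is integrated only against a measure that has no mass at $\lambda=0$.
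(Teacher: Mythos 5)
Your reduction in the first paragraph is where the argument breaks down. You claim that $L$-supersmoothness of $A$, i.e.\ $\sup_{z\notin\mathbb R}\|A(L-z)^{-1}A^*\|<\infty$, is \emph{equivalent} to the spectral density $\rho_v(\lambda)$ being uniformly bounded. That equivalence is correct for Kato \emph{smoothness}: indeed
\[
\Im\langle A(L-z)^{-1}A^*v,v\rangle=\Im z\,\|(L-\bar z)^{-1}A^*v\|^2
=\int\frac{\Im z}{(\lambda-\Re z)^2+(\Im z)^2}\,d\nu_v(\lambda),
\]
the Poisson extension of $\rho_v$, so a uniform density bound is precisely a uniform bound on the imaginary part. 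But supersmoothness also requires a uniform bound on the \emph{real} part of the quadratic form, and as $\Im z\to0$ that real part becomes (a Poisson regularisation of) the Hilbert transform of $\rho_v$. A bounded density does \emph{not} have a bounded Hilbert transform; already $\rho=\mathbf 1_{[0,1]}$ gives a logarithmic divergence at the endpoints. So from $\rho_v\le M$ you cannot conclude $\sup_z\|A(L-z)^{-1}A^*\|<\infty$. The implication supersmooth $\Rightarrow$ bounded density is fine, but the converse — the one you use to close the argument — is false.

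This makes the last step, ``invoking the Kato criterion this time with $\sqrt L$ in place of $L$, this is exactly the $\sqrt L$-supersmoothness of $AL^{-1/4}P$,'' land short of the target: what you actually obtain after the $\lambda=\mu^2$ substitution and the Poisson-kernel bound is $\sqrt L$-\emph{smoothness} of $AL^{-1/4}P$, not supersmoothness. You flag the passage between the imaginary-part and operator-norm formulations as "the main obstacle" and suggest citing the standard $T^*T$ equivalence, but that trick only produces $\|A(L-z)^{-1}\|\lesssim(\Im z)^{-1/2}$, which degenerates as $\Im z\to0$ and gives nothing uniform in $z$. To reach supersmoothness you have to carry the full resolvent information through the change of variable — for instance by expressing $L^{-1/4}(\sqrt L-w)^{-1}L^{-1/4}$ in terms of $(L-w^2)^{-1}$ via $(\sqrt L-w)^{-1}=(\sqrt L+w)(L-w^2)^{-1}$ and controlling the residual term, or by the explicit argument in Corollary~2.5 of the cited D'Ancona preprint. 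The substitution $\lambda=\mu^2$ and the role of $P$ in removing the atom at $0$ are both correctly handled; the gap is entirely in treating the density bound as a characterisation of supersmoothness rather than of smoothness.
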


Applying this result we obtain
\begin{equation*}
  \text{$\bra{x}^{1+}$ is $L$-supersmooth}
  \quad\implies\quad 
  \text{$\bra{x}^{1+}L^{-\frac14}$ is $\sqrt{L}$-supersmooth}
\end{equation*}
and
$\bra{x}^{-\frac12-}L^{\frac14}$
\begin{equation*}
  \text{$\bra{x}^{-\frac12-}L^{\frac14}$ is $L$-supersmooth}
  \quad\implies\quad 
  \text{$\bra{x}^{-\frac12-}$ is $\sqrt{L}$-supersmooth}
\end{equation*}
and Corollary \ref{cor:smoowe} follows directly. The
RAGE-type property is proved similarly to the previous one.

\bibliographystyle{plain}

\end{document}